\documentclass[a4paper,12pt]{amsart}

\usepackage{anysize}


\newcommand{\phof}{P_{\mathrm{Hof}}}
\newcommand{\pmar}{P_{\mathrm{Mar}}}


\usepackage{pdfsync}
\usepackage[active]{srcltx}
\usepackage[english]{babel}
\usepackage{amscd}
\usepackage{amssymb}
\usepackage{amsthm}
\usepackage{amsmath}
\usepackage[latin2]{inputenc}
\usepackage{t1enc}
\usepackage{graphicx}
\usepackage{comment}
\usepackage{enumerate}
\usepackage{hyperref}
\usepackage{psfrag}
\usepackage{epsfig}
\usepackage{dsfont}
\usepackage{enumerate}
\usepackage{bbm}
\usepackage{caption}
\usepackage{subcaption}
\usepackage[usenames,dvipsnames,svgnames]{xcolor}

\usepackage{comment}

\usepackage{wrapfig}
\usepackage{float}

\newtheorem{theorem}{Theorem}[section]
\theoremstyle{plain}

\newtheorem*{claim}{Claim}

\newtheorem{corollary}[theorem]{Corollary}

\newtheorem{definition}[theorem]{Definition}

\newtheorem*{quest}{Question}

\newtheorem{lemma}[theorem]{Lemma}

\newtheorem{proposition}[theorem]{Proposition}
\newtheorem{remark}[theorem]{Remark}

\definecolor{trp}{rgb}{1,1,1}

\definecolor{red}{rgb}{1,0,.2}

\definecolor{blue}{rgb}{0,0,1}

\definecolor{rgrey}{rgb}{.8,0.4,.4}  
\definecolor{grey}{rgb}{.13,.13,.13}  

\definecolor{green}{rgb}{0.0,0.4,0.2}

\setlength{\marginparwidth}{18mm}



\newcommand{\DD}{\mathcal{D}}
\newcommand{\MM}{\mathcal{M}}
\newcommand{\II}{\mathcal{I}}
\newcommand{\JJ}{\mathcal{J}}
\newcommand{\vv}{\underline{v}}
\newcommand{\wv}{\underline{w}}
\newcommand{\pv}{\underline{p}}
\newcommand{\RP}{{\mathbb{RP}^1}}


\usepackage{fancyhdr}
\pagestyle{fancy}
\fancyhead{}
\fancyfoot{}

\fancyfoot[L]{{\footnotesize \textit{something},  \today , File:
\jobname .tex}}
\fancyfoot[R]{{\footnotesize \thepage}}
\setlength{\footskip}{15mm}

\numberwithin{equation}{section}

\newcommand{\R}{\mathbb{R}}
\newcommand{\N}{\mathbb{N}}
\newcommand{\xv}{\underline{x}}
\newcommand{\zv}{\underline{z}}
\newcommand{\yv}{\underline{y}}
\newcommand{\tv}{\mathbf{t}}
\newcommand{\proj}{\mathrm{proj}}
\newcommand{\ii}{\mathbf{i}}
\newcommand{\jj}{\mathbf{j}}
\newcommand{\iv}{{\overline{\imath}}}
\newcommand{\jv}{{\overline{\jmath}}}
\newcommand{\ev}{\underline{e}}

\newcommand{\xvv}{\mathbf{x}}
\newcommand{\tvv}{\mathbf{t}}

\begin{document}

\pagestyle{myheadings}

\title[Dimension of the repeller for a piecewise expanding affine map]{Dimension of the repeller for a piecewise expanding affine map}


\author{Bal\'azs B\'ar\'any}
\address[Bal\'azs B\'ar\'any]{Budapest University of Technology and Economics, Department of Stochastics, MTA-BME Stochastics Research Group, P.O.Box 91, 1521 Budapest, Hungary}
\email{balubsheep@gmail.com}

\author{Micha\l\ Rams}
\address[Micha\l\ Rams]{Institute of Mathematics, Polish Academy of Sciences, ul. \'Sniadeckich 8, 00-656 Warszawa, Poland}
\email{rams@impan.pl}

\author{K\'aroly Simon}
\address[K\'aroly Simon]{Budapest University of Technology and Economics, Department of Stochastics, Institute of Mathematics, 1521 Budapest, P.O.Box 91, Hungary} \email{simonk@math.bme.hu}

\subjclass[2010]{Primary 28A80 Secondary 28A78}
\keywords{Self-affine measures, self-affine sets, Hausdorff dimension.}
\thanks{The research of B\'ar\'any and Simon was supported by the grant OTKA K123782 and by the National Research, Development and Innovation Fund (TUDFO/51757/2019-ITM, Thematic Excellence Program). B\'ar\'any acknowledges support also from NKFI PD123970 and the J\'anos Bolyai Research Scholarship of the Hungarian Academy of Sciences. Micha\l\ Rams was supported by National Science Centre grant 2014/13/B/ST1/01033 (Poland). This work was also supported by the  grant  346300 for IMPAN from the Simons Foundation and the matching 2015-2019 Polish MNiSW fund.}

\begin{abstract}
In this paper, we study the dimension theory of a class of piecewise affine systems in euclidean spaces suggested by Michael Barnsley, with some applications to the fractal image compression. It is a more general version of the class considered in the work of Keane, Simon and Solomyak \cite{KSS03} and can be considered as the continuation of the works \cite{barany2016dimension, BRS2} by the authors. We also present some applications of our results for generalized Takagi functions and fractal interpolation functions.
\end{abstract}

\date{\today}

\maketitle
\section{Introduction and Statements}\label{sec:intro}

The self-affine fractal graphs, an example of which, the Takagi function, is depicted in Figure~\ref{fig:taka}, are defined as function graphs which are equal to the finite union of their own affine copies. Motivated by a question of Michael Barnsley, in this paper, we consider a more general family of dynamically defined fractal graphs. A conspicuous example can be the generalized Takagi-like function $x\mapsto T(x)$, see Figure~\ref{d80},for the precise definition see \cite[Section~4]{BRSpartII}. The graph of the function $T$ is the union of affine images of certain subsets of the graph of $T$. Namely, the left-hand side of the graph is an affine image of the part of the graph above interval $J_1$ and similarly, the right-hand side is the affine image of the part above $J_2$. If the graph was self-affine, the left- and right-hand side of the graph would be the union of the images of the whole graph. This cause significant difficulties in the understand of the underlying dynamics. The novelty of the paper is that we introduce techniques with which these more complicated objects can be treated. A friendly presentation to the ideas, introduced in this paper, without any technical details but with more examples, can be found in the survey papers \cite{BRSpartI} and \cite{BRSpartII}.

\begin{figure}
	\centering
    \includegraphics[width=12cm]{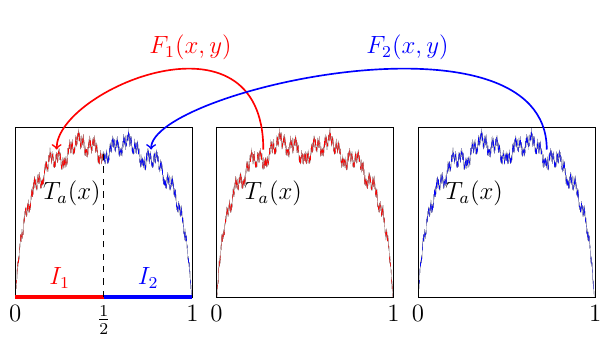}
    \caption{Self-affine Takagi function.}\label{fig:taka}
	\includegraphics[width=12cm]{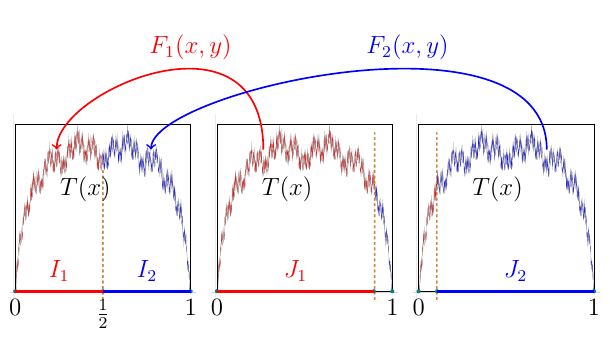}
	\caption{The generalized Takagi-like function with non-Markovian structure.}\label{d80}
\end{figure}

Our research was motivated by a fractal image compression problem. A monochrome picture can be imagined as a function $G\colon[0,1]^2\mapsto[0,1]$, where $G(x,y)$ represents the tone at the point $(x,y)$. Similarly, a colored picture can be imagined as $G\colon[0,1]^2\mapsto[0,1]^3$ using RGB colors as coordinates. One way to encode the picture is the fractal image compression method, which concept was first introduced by Barnsley, see \cite{Barnsley86, Barnsley96} and Barnsley and Hurd \cite{BarnsleyHurd} and Barnsley and Elton \cite{BarnsleyElton}. Later, the theory developed widely, see for example Fisher \cite{Fisher}, Keane, Simon and Solomyak \cite{KSS03}, Chung and Hsu~\cite{ChungHsu}, Jorgensen and Song \cite{JorgensenSong}.

The idea behind the fractal image compression is that a natural image, such as a face, landscape etc., contains a kind of self-similarity. Part of the image is similar to another part of the image. It does not contain rigorous self-similarity, the image does not appear to contain affine transformations of itself. So, rather than having the image be formed of copies of its whole self (under appropriate affine transformation), the image will be formed of copies of properly transformed parts of itself. This is not to mean that every natural image is such a fractal -- surely they are not. However, we can approximate the original image with images that can be encoded with as the attractor/repeller of such piecewise affine dynamical system, and those images can then be decoded by the coefficients of the maps used to construct them. If we started with an image that is locally self-similar (or at least some parts of it are), this can be a very short code, see Lu \cite{LU_frac}.

Jacquin \cite{Jacquin} proposed to partition the image into non-overlapping blocks called range blocks and find an appropriate partitioned iterated function system for each range. More precisely, the fractal image compression decomposes $[0,1]^2$ into axes parallel rectangles $\{I_i\}$ and finds uniformly expanding linear functions $f_i\colon I_i\mapsto[0,1]^2$ and $F_i:I_i\times\R\mapsto [0,1]^2\times\R$ such that $F_i(\{(x,y,G(x,y)):(x,y)\in I_i\})$ is close to the set $\{(x,y,G(x,y)):(x,y)\in f_i(I_i)\}$. Then repeller $\Lambda$ of $F_i$ is close to the set $\mathrm{graph}(G)$, which had to be approximated.

The fractal image compression requires each range block to be compared with all possible domain blocks within the image. Several papers proposed to use the local fractal dimension of image partitions to check for similarity, see Conci and Aquino \cite{AuraAquino} and Revathy and Jayamohan \cite{RevathyJayamohan}. This motivates our paper, which question was also raised by Michael Barnsley \cite{Barnsleyspeak}.
\begin{quest}
What is the Hausdorff dimension of the set $\Lambda$?
\end{quest}

Some of the fractal image compression algorithms use the "fractal dimension" to cluster the original picture and to find the best fitting maps $F_i$, see \cite{LU_frac} and \cite{Posada}. Usually, the box counting dimension is used in the applied papers on the fractal image compression, however, from the theoretical point of view, the Hausdorff dimension plays important role. We answer the question for the case, when $f_i$ are linear, conformal maps such that the map $f(x,y)=f_i(x,y)$ if $(x,y)\in I_i$ is Markov. Moreover, we give a complete answer on the line, namely, when $\{I_i\}$ is an interval decomposition of the unit interval and $f_i\colon I_i\mapsto[0,1]$ arbitrary, expanding linear functions. The previously known results on such objects were almost every type, namely, there is a set of parameters with full measure so that the dimension could be calculated on this set. We give here every type results, namely, under some mild conditions, the dimension can be calculated. This gives a feedback for the algorithm given for example in \cite{Posada}, how the chosen parameters approximate the original picture in sense of dimension. On the other hand, if we want to find a certain picture in a large collection of pictures coded with fractal image compression then it helps if we know the value of the dimension of the corresponding repeller. Namely, one can restrict the collection of the pictures (with fast algoritms) to a subcollection, which dimension is close to the searched picture. To do so, we need to apply some very recent tools from fractal geometry and one dimensional dynamics. For instance, considering the generalized Takagi-like function $x\mapsto T(x)$, see Figure~\ref{d80}, the graph is not self-affine, the corresponding dynamics does not admit a Markov partition, as opposed to the traditional generalized Takagi-function, see \cite{BRSpartII}. Hence, we need to admit the techniques introduced by Hofbauer to tackle the problems caused by the non-Markovian structure together with the recent techniques in the study of self-affine sets.

Let us now describe the setup of our paper in more details. Let $d\geq1$ and let $\{I_i\}_{i=1}^M$ be a partition of the unit cube $I:=[0,1]^d$. Assume that all the elements are regular sets in the sense that $\overline{I_i^o}=\overline{I_i}$ for every $i=1,\ldots,M$, where $A^o$ denotes the interior and $\overline{A}$ denotes the closure of the set $A$. For simplicity, we assume that all the sets of the partition are simply connected.

 Moreover, for every $i=1,\dots,M$ let $f_i$ be a uniformly expanding similitude of the form
 $$
 f_i(\xv)=\gamma_iU_i\xv+\vv_i,
 $$
 where $\gamma_i>1$, $U_i\in O(d)$ and $\vv_i\in\R^d$, such that $f_i\colon I_i\mapsto I$ and $f_i$ can be extended to $\overline{I_i}$, and we consider the uniformly, piecewise expanding dynamical system $f\colon I\mapsto I$, where
\begin{equation}\label{eq:base}
f(x)=f_i(x)\text{ if }x\in I_i.
\end{equation}
We say that $f$ is \textit{Markov} if $f(\overline{I_i})$ is equal to a finite union of elements in $\{\overline{I_i}\}_{i=1}^M$ for every $i=1,\ldots,M$. We call the set $\mathfrak{S}=\bigcup_{i=1}^M\partial I_i$ the singularity set and let $\mathfrak{S}_\infty=\bigcup_{n=0}^{\infty}f^{-n}(\mathfrak{S})$.

We define a skew-product dynamics in the following way. Let $F_i:I_i\times\R^k\mapsto I\times\R^k$ be such that
\begin{equation}\label{eq:defskew}
F_i(x,z)=(f_i(x),g_i(x,z)),
\end{equation}
where $g_i\colon I\times\R^k\mapsto\R^k$ is an affine mapping such that for every $x\in I$, the function $g_i(x,.)\colon\R^k\mapsto\R^k$ is a similitude of the form
$$
g_i(\xv,\zv)=A_i\xv+\lambda_iO_i\zv+\tv_i,
$$
where $A_i\in\R^{k\times d}$, $O_i\in O(k)$, $\lambda_i>1$ and $\tv_i\in\R^k$. We can define an uniformly expanding map $F$ on the whole region $I\times\R^k$ in the natural way,
\begin{equation}\label{eq:F}
F(x,z)=F_i(x,z)\text{ if }x\in I_i.
\end{equation}
We call the dynamical system $f\colon I\mapsto I$ the \textit{base system} of $F$.  For an example with base system on the real line, see Figure~\ref{a88b}. In case of fractal image compression, if $G\colon[0,1]^2\mapsto\R^k$ is the representation of the picture (with $k=1$ or $3$) which we want to approximate, the role of $g_i$ is to find the parameters $A_i$, $\lambda_i$, $O_i$ and $\tv_i$, for which $g_i(\xv,G(\xv))=A_i\xv+\lambda_iO_iG(\xv)+\tv_i$ fits the best to $G(f_i(\xv))$ for $\xv\in I_i$ in some sense.

\begin{figure}[h]
  \centering
  \includegraphics[width=\textwidth]{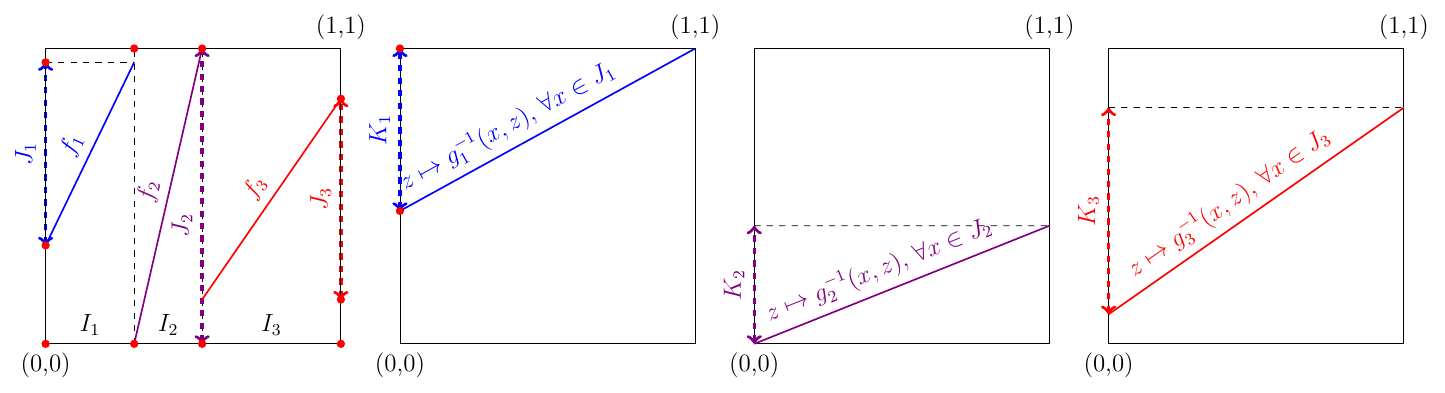}

        \begin{subfigure}[b]{0.33\textwidth}
                \centering
                \includegraphics[width=.95\linewidth]{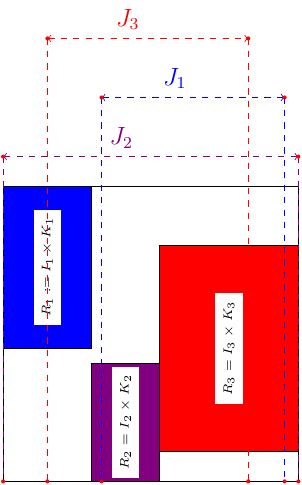}
                \caption{$R_i=F_{i}^{-1}\left(J_i\times [0,1]\right)$\\
                $\Lambda_1:=\cup_{i=1}^{3}R_i$}
                \label{a90b}
        \end{subfigure}%
        \begin{subfigure}[b]{0.33\textwidth}
                \centering
                \includegraphics[width=.95\linewidth]{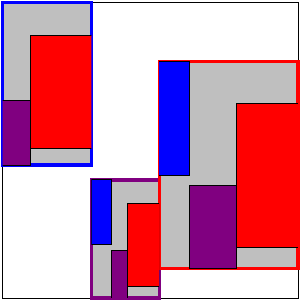}
                \caption{$\Lambda_2:=$\\
                $\cup_{i=1}^{3}
                F_{i}^{-1}\left(\left(J_i\times [0,1]\right)\cap \Lambda_1\right)
                $}
                \label{a91b}
        \end{subfigure}%
         \begin{subfigure}[b]{0.33\textwidth}
        \centering
                \includegraphics[width=.95\linewidth]{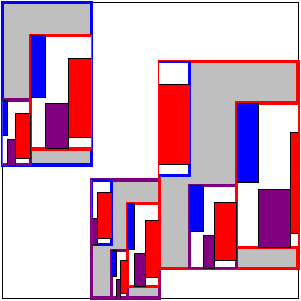}
                \caption{$\Lambda_3:=$\\
                $\cup_{i=1}^{3}
                F_{i}^{-1}\left(\left(J_i\times [0,1]\right)\cap \Lambda_2\right)
                $}
                \label{a89b}
        \end{subfigure}%
        \caption{The dynamics of $f$ and the local inverses of $F$ with non-Markovian base system.}\label{a88b}
\end{figure}

\begin{figure}[h]
  \centering
  \includegraphics[width=\textwidth]{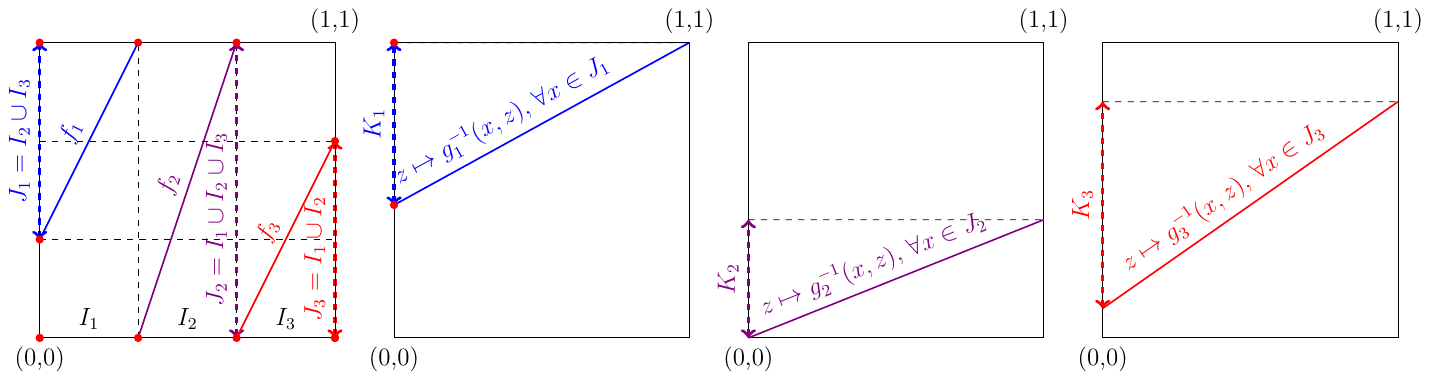}
        \begin{subfigure}[b]{0.33\textwidth}
                \centering
                \includegraphics[width=.95\linewidth]{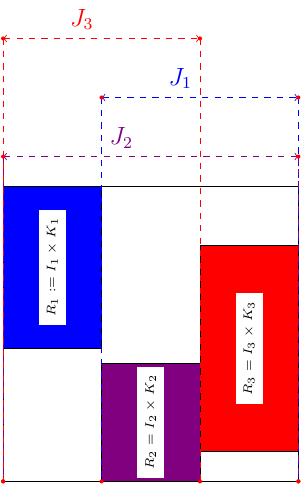}
                \caption{$R_i=F_{i}^{-1}\left(J_i\times [0,1]\right)$\\
                $\Lambda_1:=\cup_{i=1}^{3}R_i$}
                \label{a90}
        \end{subfigure}%
        \begin{subfigure}[b]{0.33\textwidth}
                \centering
                \includegraphics[width=.95\linewidth]{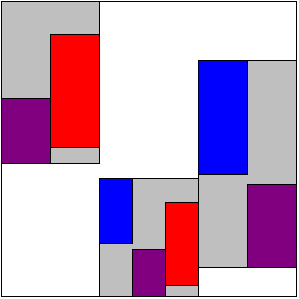}
                \caption{$\Lambda_2:=$\\
                $\cup_{i=1}^{3}
                F_{i}^{-1}\left(\left(J_i\times [0,1]\right)\cap \Lambda_1\right)
                $}
                \label{a91}
        \end{subfigure}%
         \begin{subfigure}[b]{0.33\textwidth}
        \centering
                \includegraphics[width=.95\linewidth]{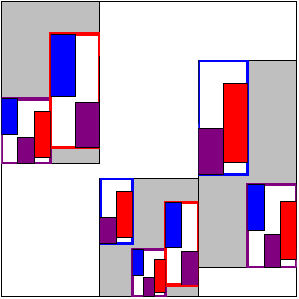}
                \caption{$\Lambda_3:=$\\
                $\cup_{i=1}^{3}
                F_{i}^{-1}\left(\left(J_i\times [0,1]\right)\cap \Lambda_2\right)
                $}
                \label{a89}
        \end{subfigure}%
        \caption{The dynamics of $f$ and the local inverses of $F$ with Markov base system.}\label{a88}
\end{figure}

By hyperbolicity, there exists open, bounded and simply connected set $U\subset\R^{d+k}$ and a uniformly contracting functions $\widetilde{F}_i$, defined on $\R^{d+k}$ such that for every $i=1,\dots,M$
	$$
	\widetilde{F}_i^{-1}(\xv)=F_i(\xv)\text{ for }\xv\in I_i\times\R^k,
	$$
	and
	\begin{equation}\label{eq:U}
	\bigcup_{i=1}^M\widetilde{F}_i(U)\subseteq U.
	\end{equation}
We call the functions $\widetilde{F}_i$ the local inverses of $F$. Thus, the system $F$ has a unique, nonempty and compact repeller $\Lambda$, for which
\begin{equation}\label{eq:lambda}
\bigcup_{i=1}^M\widetilde{F}_i(\Lambda\cap F_i(I_i\times\R^k))=\Lambda.
\end{equation}
By the construction, for every $x\in I\setminus\mathfrak{S}_\infty$ and $y\neq z\in\R$,  $\|F^n(x,y)-F^n(x,z)\|\to\infty$ as $n\to\infty$, thus $\Lambda$ is a graph of a function $G\colon I\setminus~\mathfrak{S}_\infty\mapsto~\R^k$. That is, $G$ is the function for which
$$
G(x)=z\text{, where }\{F^n(x,z)\}_{n=1}^{\infty}\text{ is bounded.}
$$

The dimension theory of non-conformal repellers (like $\Lambda$) is a widely developing topic in fractal geometry, see for example Chen and Pesin \cite{ChenPesin} and Falconer \cite{Falconersurvey}. In our setup, the dimension of $\Lambda$ can be approximated by the dimension of self-affine sets (see precise details later). Falconer \cite{falconer1988hausdorff} showed a general upper bound on the dimension in terms of the singular values, called affinity dimension. Many authors have obtained matching lower bounds in special cases. Falconer \cite{falconer1988hausdorff}, Solomyak \cite{solomyak1998measure} and Jordan, Pollicott and Simon \cite{JordanPollicottSimon07} studied the case of self-affine sets in which the translation parameters are chosen Lebesgue-typically. For planar self-affine sets, Hueter and Lalley~\cite{HueterLalley} and K\"aenm\"aki and Shmerkin \cite{KaenmakiShmerkin} showed that the dimension is equal to the affinity dimension for special classes, such as those satisfying bunching conditions. Later for planar systems, it was shown that under the assumption that the Furstenberg measure of the associated matrix random walk is sufficiently large, the Hausdorff dimension equals to the affinity dimension, see Morris and Shmerkin \cite{morris2017equality}, Rapaport \cite{rapaport2015self}, and B\'ar\'any and K\"aenm\"aki \cite{barkaen}. Most recently, B\'ar\'any, Hochman and Rapaport \cite{BHR} solved the problem on the plane under separation condition and positivity of the dimension of the Furstenberg measure. This paper highly relies on these results.

Throughout the paper, the Hausdorff dimension of a set $A$ is denoted by $\dim_HA$, and the (lower) Hausdorff dimension of a measure $\mu$ is denoted by $\dim_H\mu$ too. For the definition and properties of the Hausdorff dimension, see Falconer \cite{Falconer2} and Mattila \cite{mattila1999geometry}.

Let $\mu$ be a $F$-invariant, ergodic measure on $I\times\R^k$. Let us denote by $\chi_1(\mu)$ the Lyapunov exponent of $f$ w.r.t measure $(\proj)_*\mu$, where $\proj:I\times\R^k\mapsto I$. Moreover, let $\chi_2(\mu)$ be the Lyapunov exponent w.r.t. the skew product. That is,
\begin{eqnarray*}
&&\chi_1(\mu)=\int\log\|D_{\proj(\xv)}f\|\mathrm{d}\mu(\xv)=\sum_{i=1}^M\mu(I_i\times\R^k)\log\gamma_i\text{ and }\\
&&\chi_2(\mu)=\int\log\|\partial_2g(\xv)\|\mathrm{d}\mu(\xv)=\sum_{i=1}^M\mu(I_i\times\R^k)\log\lambda_i,
\end{eqnarray*}
where $\partial_1$ and $\partial_2$ denotes the derivative matrix w.r.t. the $x$ and $z$ coordinates respectively.

If $0<\chi_1(\mu)\leq\chi_2(\mu)$ then
$$
\dim\mu=\frac{h_{\mu}}{\chi_1(\mu)},
$$
without any further restriction. Indeed, the upper bound is trivial and the lower bound follows from the fact that $\proj_*\mu$ is $f$-invariant and ergodic and the result of Hofbauer and Raith \cite[Theorem~1]{hofbauer_raith}.

Let us define the Lyapunov dimension $D$ of an ergodic measure $\mu$ for which $\chi_1(\mu)>\chi_2(\mu)>0$ in the usual way,
\begin{equation}\label{eq:lyapdim}
D(\mu):=\min\left\{\frac{h_{\mu}}{\chi_2(\mu)},k+\frac{h_{\mu}-\chi_2(\mu)}{\chi_1(\mu)}\right\}.
\end{equation}
This definition corresponds to \cite[Definition~1.6]{JordanPollicottSimon07}. We note that for our system $D(\mu)< d+k$.

Unfortunately, our methods do not allow us to handle the case $\chi_1(\mu)>\chi_2(\mu)$ for some ergodic, invariant measure, in complete generality. Throughout the paper, we every time assume that
$$
\|D_{\mathrm{proj}(\xv)}f\|>\|\partial_2g(\xv)\|\text{ for every }\xv\in I\times\R^k,
$$
that is, the expansion is much stronger on the base system than in the second coordinate.

Let us denote the pressure function induced by the potential
\begin{equation}\label{eq:potential}
\varphi^s(\xv)=\begin{cases}
s\log\|(\partial_2g(\xv))^{-1}\| & \text{if }0\leq s\leq k, \\
k\log\|(\partial_2g(\xv))^{-1}\|+(s-k)\log\|(D_{\proj(\xv)}f)^{-1}\| & \text{if }k<s\leq d+k.
\end{cases}
\end{equation}
by $P_{\mathrm{Hof}}\colon[0,d+k)\mapsto\R$. This pressure $\phof$ is defined in the same way as the pressure given by Hofbauer~\cite[Section~3]{Hofbauerfunda}. We give the precise definition and further properties of this pressure later in Section~\ref{sec:piece}.

Finally, before we state our main results, we need a generalised version of Hochman's exponential separation condition (see \cite{barany2016dimension} and \cite{hochman2012self}), which was introduced in Hochman \cite{hochman2015self}.

\begin{definition}[HESC]\label{def:hochman}
  We say that an iterated function system (IFS) of similitudes $\{h_i\colon x\mapsto\lambda_iO_ix+t_i\}_{i=1}^M$ on $\R^k$ satisfies the {\rm Hochman's exponential separation condition} (HESC) if
  \begin{enumerate}
    \item $\limsup_{n\to\infty}\min_{\ii|_n\neq\jj|_n}\dfrac{\log\|h_{\ii|_n}(\mathbf{0})-h_{\jj|_n}(\mathbf{0})\|}{n}>-\infty$,
   \item\label{item:HESC2} the group $\mathcal{S}(\{O_i\}_{i=1}^M)$ generated by the orthogonal parts is strongly irreducible, that is, there is no finite collection $W$ of non-trivial subspaces of $\R^k$ such that $OV\in W$ for every $V\in W$ and  $O\in\mathcal{S}(\{O_i\}_{i=1}^M)$.
  \end{enumerate}
\end{definition}

Observe that part 2 of the condition is relevant only in the case when $k\geq2$. For example, if $k=1$ then the system $\{x\mapsto x/3,x\mapsto x/3+t,x\mapsto(x+2)/3\}$, where $t\notin\mathbb{Q}$, satisfies the HESC, see \cite{hochman2012self}. For more recent results see Rapaport \cite{rapaport2020}.

Now, we are ready to state the main theorems of this paper. We consider the Hausdorff dimension of $\Lambda$ in four cases. Although, the statements of the theorems are quite similar, the proofs differ significantly, thus, it is natural to state them into separate theorems. First, we discuss the case of non-Markovian 1-dimensional base.

\begin{theorem}[Diagonal, non-Markov]\label{thm:maindiag}
	Let $f$ be a piecewise linear expanding map as in \eqref{eq:base} with $d=1$. Suppose that $k=1$ and $g_i$ has the form
$$
	g_i(x,z)=g_i(z)=\lambda_iz+t_i,\text{ }\lambda_i>1, t_i\in\R,
	$$
and $\gamma_i>\lambda_i$ for every $i=1,\ldots, M$. If the IFS $\{g_i^{-1}\}_{i=1}^M$ satisfy HESC then
	$$
	\dim_H\Lambda=\sup_{\mu\in\mathcal{M}_{\rm{erg}}(\Lambda)}D(\mu)=s_0,
	$$
	where $s_0$ is the unique number such that $\phof(s_0)=0$.
\end{theorem}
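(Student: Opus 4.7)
The plan is to establish the cycle of inequalities
\[
s_0 \;\geq\; \dim_B\Lambda \;\geq\; \dim_H\Lambda \;\geq\; \sup_{\mu\in\mathcal{M}_{\rm erg}(\Lambda)}D(\mu) \;\geq\; s_0,
\]
from which all four equalities follow. For the upper bound $\dim_B\Lambda\le s_0$ I would pass to the Hofbauer--Markov extension of the non-Markov base, in which admissible inverse-branch words $\omega=(\omega_0,\dots,\omega_{n-1})$ are the natural index set. The associated cylinder $J_\omega:=f_{\omega_0}^{-1}\circ\cdots\circ f_{\omega_{n-1}}^{-1}(I)$ is an interval of length $(\gamma_{\omega_0}\cdots\gamma_{\omega_{n-1}})^{-1}$, and the portion of $\Lambda$ above $J_\omega$ is contained in a rectangle of height $(\lambda_{\omega_0}\cdots\lambda_{\omega_{n-1}})^{-1}$. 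Since $\gamma_i>\lambda_i$ the rectangles are thin and tall; chopping each one into squares of side equal to its height produces, for $s\in(1,2]$, the $s$-covering sum
\[
\sum_{|\omega|=n}(\gamma_{\omega_0}\cdots\gamma_{\omega_{n-1}})^{-(s-1)}(\lambda_{\omega_0}\cdots\lambda_{\omega_{n-1}})^{-1},
\]
which is exactly the $n$-th partition sum defining $\phof(s)$. Because $\phof(s)<0$ for $s>s_0$, the sum decays exponentially and this cover gives $\dim_B\Lambda\le s_0$; the case $s\le 1$ is handled symmetrically, covering by squares of side $(\gamma_{\omega_0}\cdots\gamma_{\omega_{n-1}})^{-1}$ and using the first branch of $\varphi^s$.

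For the lower bound I invoke the variational principle for the Hofbauer pressure, producing ergodic base measures $\nu_n$ with $h_{\nu_n}+\int\varphi^{s_0}\,d\nu_n\to 0$. Each $\nu_n$ lifts uniquely to an ergodic $F$-invariant measure $\mu_n$ on $\Lambda$ because the fibre maps $g_i^{-1}$ are strict contractions, and the entropies coincide. A direct evaluation of $\int\varphi^{s_0}\,d\mu_n$ from \eqref{eq:potential} (which on $I_i\times\R$ reads $-s_0\log\lambda_i$ when $s_0\le 1$ and $-\log\lambda_i-(s_0-1)\log\gamma_i$ when $s_0>1$), combined with \eqref{eq:lyapdim}, yields $D(\mu_n)\to s_0$, already giving $\sup_\mu D(\mu)\ge s_0$. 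To convert this into $\dim_H\Lambda\ge D(\mu_n)$ I appeal to the dimension formula $\dim_H\mu=D(\mu)$ that the authors established in \cite{barany2016dimension, BRS2} for ergodic skew-product measures in the regime $\chi_1>\chi_2>0$; its ingredients are a Ledrappier--Young-type decomposition along fibres together with Hochman's theorem \cite{hochman2015self} applied to the self-similar IFS $\{g_i^{-1}\}$, which is exactly where HESC enters.

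The main obstacle is that both halves of the argument were developed for Markov base systems, whereas the present theorem is explicitly the non-Markov case. The measures $\mu_n$ from the variational principle may be supported on the entire Hofbauer tower and not on any finite Markov subsystem, so I would resolve this by approximating them from below by ergodic measures supported on finite-level subsystems of the tower (which correspond to genuine finite Markov subshifts of $f$), applying the dimension formula of \cite{barany2016dimension, BRS2} on each such subsystem, and passing to the limit using continuity of $\chi_1,\chi_2$ along the approximation together with the standard sequential-approximation theorems for $\phof$. The delicate point, on which I expect the bulk of the work to hinge, is that entropy fails to be upper semi-continuous on the full tower, so extra care is required to guarantee that the lower bound $\dim_H\mu_n$ tracks $D(\mu_n)$ faithfully in the limit; once this is controlled, the four inequalities close up and the theorem follows.
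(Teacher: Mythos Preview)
Your upper bound contains a genuine gap. The covering sum
\[
\sum_{|\omega|=n}(\gamma_{\omega_0}\cdots\gamma_{\omega_{n-1}})^{-(s-1)}(\lambda_{\omega_0}\cdots\lambda_{\omega_{n-1}})^{-1}
\]
is \emph{not} the $n$-th partition sum of $\phof$. In the paper $\phof(s)=P(f|_I,\varphi^s)$ is defined in \eqref{eq:wholepres} as a supremum over Markov subsets $B\subset[0,1]$ of the restricted pressures $P(f|_B,\varphi^s)$; it is not defined as a limit of sums over all admissible $n$-words. What your covering produces is the ``full'' pressure $P(s,[0,1])$ of \eqref{eq:pressuredefsymb}, whose root is \emph{a priori} $\ge s_0$. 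The paper explicitly flags this at the start of Section~\ref{sec:piece}: the elementary cover bound ``may be heavily suboptimal in the case of non-Markovian base systems''. To get $\dim_H\Lambda\le s_0$ the paper instead takes a $\varphi^s$-conformal measure $\mu$ (Theorem~\ref{thm:existconf}), splits $[0,1]$ into the set $M_\rho$ where $\mu(f^n(\II_n(x)))>\rho$ infinitely often and its complement, shows the complement has dimension at most $1$ (Proposition~\ref{prop:approxgood}), and uses conformality to kill $\mathcal{H}^s$ on $M_\rho\times\R$ for $s>s_0$ (Lemma~\ref{lem:ubnonmarkov}). This conformal-measure step is the missing idea in your argument.

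Your lower bound is over-engineered and leans on a formula you do not have. The equality $\dim_H\mu=D(\mu)$ for an ergodic measure supported on the \emph{full} non-Markov system is not established in \cite{barany2016dimension,BRS2}; those results, like Theorem~\ref{thm:mainmark} here, are for genuine IFS/Markov systems. The paper avoids this entirely: since $\phof$ is by definition a supremum over Markov subsets, one simply picks a Markov subset $B$ with $P(f|_B,\varphi^s)$ arbitrarily close to $\phof(s)$, observes that HESC for $\{g_i^{-1}\}$ is inherited by any subsystem of compositions, and applies the already-proved Markov case (Theorem~\ref{thm:mainmark}\eqref{it:mark1}) to obtain $\dim_H\Lambda\ge\dim_H\Lambda_B=s_0(B)\to s_0$. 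No lifting of measures, no tower approximation, and no entropy semi-continuity issues arise---your ``delicate point'' evaporates once you approximate the pressure rather than a prescribed measure.
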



We call the system $F\colon I\times\R^k\mapsto I\times\R^k$ \textit{essentially non-diagonal} if $d=k=1$ and the matrices $\{DF_i\}_{i=1}^M$ are not simultaneously diagonalisable along the dynamics. More precisely, there exists finite length words $\iv,\jv$ and $\hbar$ such that
\begin{enumerate}
	\item $\iv\neq\jv$ and $\iv\hbar\jv$ is admissible,
	\item the functions $f_{\iv}$ and $f_{\jv}$ have fixed points, and
\begin{equation}\label{eq:essnondiag}
\frac{\partial_1g_{\iv}}{f_{\iv}'-\partial_2g_{\iv}}\neq\frac{\partial_1g_{\jv}}{f_{\jv}'-\partial_2g_{\jv}}\text{ and }
\frac{(f_{\iv}'-\partial_2g_{\iv})\partial_1g_{\hbar}+f_{\hbar}'\partial_2g_{\hbar}\partial_1g_{\iv}}{f_{\hbar}'(f_{\iv}'-\partial_2g_{\iv})}\neq\frac{\partial_1g_{\jv}}{f_{\jv}'-\partial_2g_{\jv}}.
\end{equation}
\end{enumerate}
 We note that since that $f_i$ and $g_i$ are linear function, thus, the place of evaluation is redundant. In particular, \eqref{eq:essnondiag} implies that the eigenspaces corresponding to the eigenvalues $\lambda_\iv$ and $\lambda_\jv$ are different and there exists a path $\hbar$ connecting $\iv$ and $\jv$ so that the eigenspaces are not mapped into each other by the matrix $DF_{\hbar}$. Let us note that $DF_i$ is always a lower-triangular $2\times2$ matrix, but we say that we are in the triangular case when we want to emphasize that $DF_i$ is not diagonal.

\begin{theorem}[Triangular, non-Markov]\label{thm:maintriang}
	Assume that $d=k=1$ and $F$ is essentially non-diagonal and $f$ is a topologically transitive, piecewise linear expanding map. If $\gamma_i>\lambda_i$ for every $i=1,\ldots, M$ then
	$$
	\dim_H\Lambda=\sup_{\mu\in\mathcal{M}_{\rm{erg}}(\Lambda)}D(\mu)=s_0,
	$$
	where $s_0$ is the unique number such that $\phof(s_0)=0$.
\end{theorem}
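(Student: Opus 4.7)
My approach is to squeeze $\dim_H\Lambda$ between $s_0$ and $s_0$ by bounding it from above via the Hofbauer pressure and from below via Markov approximation. For the upper bound $\dim_B\Lambda\leq s_0$, I would cover $\Lambda$ at level $n$ by the images of $[0,1]^2$ under the inverse branches $F_\ii^{-1}$ for admissible $\ii$ of length $n$. Because $\gamma_i>\lambda_i$, each such image is a rectangle of horizontal side $\asymp \gamma_\ii^{-1}$ and vertical side $\asymp \lambda_\ii^{-1}$, with the vertical side the longer one; the corresponding $s$-covering sum is therefore essentially $\sum_\ii \exp\bigl(S_n\varphi^s(\ii)\bigr)$, which by the Hofbauer thermodynamic formalism of Section~\ref{sec:piece} decays exponentially whenever $s>s_0$. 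This yields $\dim_B\Lambda\leq s_0$, hence also $\dim_H\Lambda\leq s_0$; the bound $\sup_\mu D(\mu)\leq s_0$ then follows from a standard Ledrappier--Young-type inequality relating the Lyapunov dimension of an ergodic measure to the pressure of $\varphi^s$ at its entropy.

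For the matching lower bound, the plan is to exhaust the dynamics by topologically mixing Markov subsystems. Since $f$ is topologically transitive and piecewise linear, the Hofbauer--Raith tower construction (\cite{hofbauer_raith}) produces an increasing family $\Lambda^{(n)}\subset\Lambda$ of finite Markov sub-repellers whose Hofbauer pressures converge to $\phof$, and consequently whose Bowen roots $s_n$ converge to $s_0$. On each such piece the restriction of $F$ is the attractor of a genuine finite iterated function system of triangular affine contractions, to which the dimension theory for triangular self-affine sets applies: this is precisely the setting of the authors' earlier work \cite{BRS2} and, under a slightly different hypothesis, of \cite{BHR}. The essentially non-diagonal hypothesis, which only involves the three fixed finite words $\iv,\hbar,\jv$, is automatically preserved as soon as the Markov piece is deep enough to contain them; it guarantees that the cocycle $\{DF_i\}$ is not reducible to a diagonal one along the sub-dynamics, which is exactly the input needed to conclude $\dim_H\Lambda^{(n)}=s_n$ via the equilibrium state for $\varphi^{s_n}$. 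Letting $n\to\infty$ yields both $\dim_H\Lambda\geq s_0$ and the variational identity $\sup_\mu D(\mu)=s_0$ through the ergodic measures supported on the subsystems.

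The main obstacle, as I see it, is the joint control of two approximations as one truncates to $\Lambda^{(n)}$: first, the standard but non-trivial fact from countable-state thermodynamic formalism that the Markov pressures converge to $\phof$ so that $s_n\to s_0$; and second, more subtly, that the triangular self-affine dimension machinery applies to each $\Lambda^{(n)}$ with enough uniformity to allow passage to the limit. This second point is where the triangular theorem really differs from the diagonal Theorem~\ref{thm:maindiag}: the eigen-direction of $DF_i$ genuinely depends on $i$, so the Furstenberg measure of the induced cocycle is non-trivial, and one has to verify that the essentially non-diagonal condition forces its dimension to remain positive on each sufficiently deep Markov piece. Once these two approximation steps are controlled, the matching upper and lower bounds together with the variational expression close the argument.
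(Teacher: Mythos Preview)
Your two-sided strategy matches the paper's, but the upper-bound sketch hides the real difficulty. The naive level-$n$ covering you describe yields $\overline{\dim}_B\Lambda\le s_0([0,1])$, the root of the \emph{Falconer} pressure $P(s,[0,1])$ of \eqref{eq:pressuredefsymb}; this is exactly Lemma~\ref{lem:ubandadditive}. But the $s_0$ in the theorem is the root of the \emph{Hofbauer} pressure $\phof(s)=\sup_{B\in\MM([0,1])}P(f|_B,\varphi^s)$, a supremum over Markov subsets, and there is no a priori reason the full cylinder sum $\sum_{\ii\in X_n}e^{S_n\varphi^s}$ decays once $s$ exceeds this Hofbauer root. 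The paper does \emph{not} close this gap by covering; instead Section~\ref{sec3} invokes the $\varphi^s$-conformal measure of Theorem~\ref{thm:existconf}: one splits $\Lambda$ into the part lying over $M_\rho$ (points whose $\II_n$-cylinder has large image measure infinitely often) and its complement; conformality bounds the first part by $s_0$ (Lemma~\ref{lem:ubnonmarkov}), while the complement has dimension $\le 1$ by Proposition~\ref{prop:approxgood}, and Lemma~\ref{lem:bounds0} shows $s_0\ge 1$. Your sentence ``by the Hofbauer thermodynamic formalism \dots\ decays exponentially whenever $s>s_0$'' is precisely the statement that needs this argument.

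On the lower bound your plan is again the paper's, but the assertion that the essentially non-diagonal witnesses $\iv,\hbar,\jv$ are ``automatically'' present in a Markov piece ``deep enough'' is not justified: the Markov subsets coming from the tower need not contain any prescribed periodic orbit. The paper handles this explicitly via Proposition~\ref{prop:joinmarkovs}, which uses the topological transitivity of $f$ to enlarge a given transitive Markov subset $B$ (with pressure close to $\phof$) to one $B'\supseteq B\cup\{x_{\iv},x_{\jv}\}$; the pressure only increases, and now $F|_{B'\times\R}$ is essentially non-diagonal, so Theorem~\ref{thm:mainmark}\,(iii) (ultimately Theorem~\ref{thm:planar}) applies to the induced IFS. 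No uniformity in $n$ is needed: a single such $B'$ already gives $\dim_H\Lambda\ge s_0-\varepsilon$, and the variational identity follows from the equilibrium measures on these Markov pieces (Lemma~\ref{lem:vari}).
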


We have to treat the diagonal and the triangular cases in very different ways, the proof of the diagonal case is not a special case of the triangular situation. In particular, in the triangular situation we strongly rely on the assumption that the system is essentially non-diagonal, and in the diagonal case we use heavily the property that the projections are self-similar, which is not the case in the general triangular situation.

Also, the previous results strongly rely on the work of Hofbauer \cite{hofbauerergod, Hofbauerfunda}, Hofbauer and Raith \cite{hofbauer_raith} and Hofbauer and Urba\'nski \cite{hofbauerurbanski} on piecewise monotone interval maps, which techniques allows to approximate the set $\Lambda$ with Markov-subsets.

In the next theorems, we focus on the cases when $d$ is not necessarily equals to $1$ but then we require that the base system $f$ is Markov. In this case, the pressure $\pmar$ corresponding to the potential defined in \eqref{eq:potential} is the usual pressure function defined over subshifts of finite type. For precise definition, see Section~\ref{subsec:subaddpres}.

\begin{theorem}[Diagonal, Markov]\label{thm:Markov}
	Let $f$ be a piecewise linear expanding Markov map as in \eqref{eq:base} with $d\geq1$. Suppose that $g_i$ has the form
$$
	g_i(\xv,z)=g_i(z)=\lambda_iO_iz+t_i,\text{ }\lambda_i>1, O_i\in O(k),t_i\in\R^k,
	$$
and $\gamma_i>\lambda_i$ for every $i=1,\ldots,M$. If the IFS $\{g_i^{-1}\}_{i=1}^M$ satisfies HESC then
	$$
	\dim_H\Lambda=\dim_B\Lambda=\sup_{\mu\in\mathcal{M}_{\rm{erg}}(\Lambda)}D(\mu)=s_0,
	$$
	where $s_0$ is the unique number such that $\pmar(s_0)=0$.
\end{theorem}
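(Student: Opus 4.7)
The plan is to pass to the symbolic model and then reduce to a statement about measure dimensions. Since $f$ is Markov, it admits a topologically conjugate model $(\Sigma_A,\sigma)$, a subshift of finite type on the alphabet $\{1,\ldots,M\}$ with transition matrix $A$ given by $A_{ij}=1\iff\overline{I_j}\subset f(\overline{I_i})$. The base coding $\pi_1:\Sigma_A\to I$ is the natural inverse-branch limit of the conformal contractions $\{f_i^{-1}\}$, and because $g_i$ depends only on the letter $i$, the IFS $\{g_i^{-1}\}_{i=1}^M$ is an honest self-similar IFS in $\R^k$; denote its Markov coding by $\pi_2:\Sigma_A\to\R^k$. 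Then $F$-invariant ergodic measures on $\Lambda$ correspond to $\sigma$-ergodic measures on $\Sigma_A$ via $\mu=(\pi_1\times\pi_2)_*\nu$, and $\Lambda=(\pi_1\times\pi_2)(\Sigma_A)$. The Markov pressure $\pmar(s)$ associated to the H\"older potential $\varphi^s$ is strictly decreasing in $s$ with $\pmar(0)>0$ and $\pmar(d+k)<0$, so $s_0$ is well defined. The standard variational principle yields ergodic $\nu_\varepsilon$ with $h_{\nu_\varepsilon}+\int\varphi^{s_0}\,d\nu_\varepsilon\geq-\varepsilon$; substituting into the two branches of $\varphi^s$ shows $D(\nu_\varepsilon)\geq s_0-\varepsilon'$ for $\varepsilon'\to 0$, so $\sup_\nu D(\nu)\geq s_0$, the reverse inequality being immediate from the definitions.

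For the upper bound, cover $\Lambda$ by the images of level-$n$ cylinders. The cylinder $[\omega]=[\omega_1\cdots\omega_n]$ projects to a box of diameter $\gamma_\omega^{-1}$ in the base factor and $\lambda_\omega^{-1}$ in the fiber factor, where $\gamma_\omega=\prod_i\gamma_{\omega_i}$ and $\lambda_\omega=\prod_i\lambda_{\omega_i}$, and by assumption $\gamma_\omega>\lambda_\omega$. For $s>k$ we cover each such box by $\lceil(\gamma_\omega/\lambda_\omega)^k\rceil$ balls of radius $\gamma_\omega^{-1}$, contributing $\gamma_\omega^{-(s-k)}\lambda_\omega^{-k}=e^{S_n\varphi^s(\omega)}$ each; for $s\leq k$ a single ball of radius $\lambda_\omega^{-1}$ suffices, contributing $\lambda_\omega^{-s}=e^{S_n\varphi^s(\omega)}$. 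Summing over admissible $\omega$ of length $n$ yields the Moran-type bound
\begin{equation*}
\sum_{|\omega|=n}e^{S_n\varphi^s(\omega)}\asymp e^{n\pmar(s)},
\end{equation*}
so for any $s>s_0$ we have $\pmar(s)<0$ and the total $s$-cover shrinks to $0$, giving $\dim_B\Lambda\leq s_0$.

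For the lower bound it suffices to show that for any ergodic $\nu\in\mathcal{M}_\sigma(\Sigma_A)$ with $\chi_1(\nu)>\chi_2(\nu)>0$ and $\mu=(\pi_1\times\pi_2)_*\nu$, one has $\dim_H\mu=D(\nu)$; combined with the approximants from paragraph one, this yields $\dim_H\Lambda\geq s_0$. Since $f$ is a conformal Markov system, $\pi_1\nu$ is exact-dimensional with $\dim\pi_1\nu=h_\nu/\chi_1(\nu)$ by standard SFT arguments. Since $\{g_i^{-1}\}$ satisfies \textbf{HESC}, Hochman's theorem, upgraded to the Markov/graph-directed setting, shows that $\pi_2\nu$ is exact-dimensional with $\dim\pi_2\nu=\min\{k,h_\nu/\chi_2(\nu)\}$. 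To assemble these into $\dim_H\mu=D(\nu)$ we estimate $\mu(B_r(\xv,\zv))$: matching $n_1\sim-\log r/\chi_1(\nu)$ symbols pins down the base coordinate to within $r$, but since $\chi_2(\nu)<\chi_1(\nu)$ the fiber coordinate only stabilises after $n_2\sim-\log r/\chi_2(\nu)>n_1$ symbols match. When $h_\nu/\chi_2(\nu)\leq k$, \textbf{HESC} forces $\pi_2$ to be essentially injective at scale $r$, so $\mu(B_r)\asymp e^{-n_2h_\nu}=r^{h_\nu/\chi_2(\nu)}=r^{D(\nu)}$. When $h_\nu/\chi_2(\nu)>k$, the measure $\pi_2\nu$ is absolutely continuous, so the fiber constraint contributes $r^k$; disintegrating $\nu$ along $\pi_2$ produces conditional measures on the $\pi_2$-fibers carrying symbolic entropy $h_\nu-k\chi_2(\nu)$, and the base constraint contributes an additional factor $r^{(h_\nu-k\chi_2(\nu))/\chi_1(\nu)}$, again matching $r^{D(\nu)}$.

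The main technical obstacle is this last case $h_\nu/\chi_2(\nu)>k$: one needs a Ledrappier--Young-type dimension conservation argument combined with a quantitative use of \textbf{HESC} guaranteeing that the absolute-continuity scale of $\pi_2\nu$ dominates the base cylinder scale uniformly in $r$, so that the disintegration really produces an exact-dimensional measure of dimension $D(\nu)$ rather than only giving matching lower and upper local dimensions almost everywhere. The strong irreducibility clause (item~\ref{item:HESC2} of Definition~\ref{def:hochman}) enters precisely here, through the application of Hochman's theorem in $\R^k$ for $k\geq 2$.
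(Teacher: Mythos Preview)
Your upper bound and the variational identification $s_0=\sup_\nu D(\nu)$ are fine and match the paper (Lemma~\ref{lem:ubandadditive} and Lemma~\ref{lem:vari}). The lower bound, however, has a genuine gap.

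You attempt to prove $\dim_H\mu=D(\nu)$ for \emph{every} ergodic $\nu$ on the SFT $\Sigma_A$, and to do so you invoke ``Hochman's theorem, upgraded to the Markov/graph-directed setting''. No such upgrade is available off the shelf: Hochman's exponential separation machinery is formulated for self-similar (Bernoulli) measures, and extending it to general shift-invariant measures on a subshift of finite type is itself a substantial project. Moreover, your local-dimension estimate in the case $h_\nu/\chi_2(\nu)>k$ is not correct as stated: HESC only gives $\dim\pi_2\nu=k$, not absolute continuity of $\pi_2\nu$, so the ``fiber contributes $r^k$'' step and the disintegration that follows are unjustified. You yourself flag this paragraph as the main obstacle, and it is indeed where the argument breaks.

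The paper sidesteps all of this. Rather than proving $\dim_H\mu=D(\nu)$ for arbitrary ergodic $\nu$ on $\Sigma_A$, it approximates the equilibrium state by \emph{$q$-step Bernoulli measures} (Lemma~\ref{lem:qstepdense}, after Jordan--Rams): fix a recurrent symbol $j$, take all admissible loops of length $q$ from $j$ to $j$, and put a Bernoulli measure on the resulting \emph{full shift} $\Omega_{j,Q}^{(q)}$. The associated system of local inverses $\{\widetilde F_{\iv}:\iv\in\Omega_{j,Q}^{(q)}\}$ is then a genuine self-affine IFS with SOSC, so the B\'ar\'any--K\"aenm\"aki criterion (Theorem~\ref{thm:iffLyap}) applies: $\dim_H\mu=D(\mu)$ iff the projections along Furstenberg--Kifer-typical subspaces have full dimension. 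In the diagonal case the Furstenberg--Kifer measure is a Dirac mass (the strong-stable direction is the fixed $\R^k$ factor), so there is a single projection, and $(\proj_V)_*\mu$ is self-similar for an IFS of compositions of the $g_i^{-1}$. HESC is inherited by such compositions, and Hochman's \emph{original} theorem (\cite{hochman2012self,hochman2015self}) yields $\dim(\proj_V)_*\mu=\min\{k,D(\mu)\}$. Letting $q\to\infty$ and using density in entropy-plus-weak* recovers $s_0$.

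The missing idea, then, is the full-shift approximation: it converts the Markov constraint into a self-affine IFS problem with open set condition, where the existing literature applies directly, and avoids both the Markov-Hochman issue and the Ledrappier--Young disintegration you outline.
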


\begin{theorem}[Triangular,Markov]\label{thm:triangularmarkov}
	Let $f$ be a piecewise linear expanding Markov map as in \eqref{eq:base} with $d\geq1$. Suppose that $k=1$ and $g_i$ has the form
$$
	g_i(\xv,z)=\lambda_iz+A_i\xv+t_i,\text{ }\lambda_i>1, t_i\in\R,
	$$
and $\gamma_i>\lambda_i$ for every $i=1,\ldots,M$. If $F$ satisfies {\rm Rapaport's condition} then
	$$
	\dim_H\Lambda=\dim_B\Lambda=\sup_{\mu\in\mathcal{M}_{\rm{erg}}(\Lambda)}D(\mu)=s_0,
	$$
	where $s_0$ is the unique number such that $\pmar(s_0)=0$.
\end{theorem}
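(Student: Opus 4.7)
The plan is to exploit the Markov structure to reduce the problem to thermodynamic formalism on a subshift of finite type, then combine a classical covering upper bound with a variational lower bound that invokes Rapaport's recent work on triangular self-affine systems.

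First, since $f$ is Markov, there is a natural coding of $\Lambda$ by a subshift of finite type $\Sigma \subset \{1,\dots,M\}^\N$: each admissible sequence $\ii \in \Sigma$ corresponds to a nested intersection of cylinder boxes, and the symbolic factor map $\pi\colon\Sigma \to \Lambda$ is continuous and finite-to-one modulo the singularity set. Under this coding, the differential $DF_{\ii|_n}$ is block triangular with diagonal blocks $\gamma_{\ii|_n} U_{\ii|_n}$ and $\lambda_{\ii|_n}$, and the singular values are exactly the collection $(\gamma_{i_1}\cdots\gamma_{i_n})$ repeated $d$ times and $(\lambda_{i_1}\cdots\lambda_{i_n})$ once, with ordering dictated by $\gamma_i > \lambda_i$. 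Thus the singular value function $\varphi^s$ is multiplicative along orbits, and $\pmar$ is just the classical pressure on $\Sigma$ of the Hölder potential $\varphi^s$. Standard arguments show $s \mapsto \pmar(s)$ is continuous, strictly decreasing on $[0,d+k]$, with $\pmar(0)=h_{\mathrm{top}}(\Sigma)>0$ and $\pmar(d+k)<0$, so $s_0$ exists and is unique.

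For the upper bound, I would cover $\Lambda$ by the level-$n$ cylinder parallelepipeds $F_{\ii|_n}^{-1}(I\times\R^k)$, which are boxes of side lengths $\gamma_{\ii|_n}^{-1}$ (in the $d$ base directions) and $\lambda_{\ii|_n}^{-1}$ (in the fiber direction). Each such box can be tiled by roughly $\varphi^{s_0}(\ii|_n) \cdot \gamma_{\ii|_n}^{s_0} \wedge \lambda_{\ii|_n}^{s_0}$ balls of a common radius, and summing over admissible $\ii|_n$ using $\pmar(s_0)=0$ yields $\dim_B \Lambda \le s_0$ by a standard computation; this is the Falconer-type upper bound in the Markov setting and works without any separation hypothesis.

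For the lower bound I would take the unique equilibrium state $\mu = \mu_{s_0}$ on $\Sigma$ for $\varphi^{s_0}$ and push it to an $F$-invariant ergodic measure on $\Lambda$. The equation $\pmar(s_0)=0$ together with the variational principle and the specific form of $\varphi^s$ gives $h_\mu = s_0\chi_2(\mu) \wedge (k\chi_2(\mu) + (s_0-k)\chi_1(\mu))$, so by definition $D(\mu)=s_0$; this identifies $s_0$ with the Lyapunov dimension of $\mu$. It remains to prove $\dim_H \mu = D(\mu)$. Here is where the triangular structure and Rapaport's condition enter: project $\mu$ to the base to get an ergodic measure $\nu$ for $f$, whose dimension equals $h_\mu/\chi_1(\mu) = d$ (or the appropriate value) since $f$ is piecewise linear conformal. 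The conditional measures of $\mu$ on the $z$-fibers are exactly the distributions of the random series $\sum_n \lambda_{i_1}^{-1}\cdots\lambda_{i_n}^{-1}(A_{i_n} f_{i_1\cdots i_{n-1}}^{-1}(\xv) + t_{i_n})$, i.e.\ self-affine measures generated by the triangular IFS $\{F_i^{-1}\}$ conditioned on the base. Rapaport's condition is tailored precisely so that the dimension of these conditional measures equals $\min\{1, (h_\mu - d\chi_1(\mu))/\chi_2(\mu)\}$ or the fiber analogue, and combining with the Ledrappier–Young-type sum $\dim_H \mu = \dim_H \nu + \dim_H \mu^{\xv}$ gives $\dim_H \mu = D(\mu)=s_0$.

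The main obstacle I expect is the fiber dimension step: Rapaport's theorem is stated for self-affine measures generated by a fixed triangular IFS, whereas here the fiber system over $\xv$ has the "contraction" $\lambda_i^{-1}$ but a translation $A_i f_{i_1\cdots i_{n-1}}^{-1}(\xv)+t_i$ that depends on the base orbit. I would handle this by fixing a typical base point $\xv$ and viewing the fiber measure as a self-similar measure on $\R^k$ driven by the symbolic shift, then verifying that Rapaport's condition as formulated for $F$ (rather than for a static IFS) is exactly the hypothesis needed to transfer his dimension conservation result. Once this is done, the equalities $\dim_H \Lambda \ge \dim_H \mu = s_0 = \sup_\mu D(\mu)$ close the loop with the upper bound, and the supremum is attained at $\mu_{s_0}$.
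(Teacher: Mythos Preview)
Your upper bound and the identification $D(\mu_{s_0})=s_0$ via the variational principle are fine and match the paper (Lemma~\ref{lem:ubandadditive} and Lemma~\ref{lem:vari}). The genuine gap is in the lower bound, precisely at the step you flag as an obstacle.

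Rapaport's theorem (Theorem~\ref{thm:rap} in the paper) is stated for \emph{self-affine measures}, i.e.\ push-forwards of Bernoulli measures under a full-shift IFS satisfying SOSC. Your equilibrium state $\mu_{s_0}$ lives on a proper subshift of finite type; its push-forward is a Gibbs measure for a graph-directed system, not a self-affine measure in Rapaport's sense, and there is no version of his result that applies to such measures directly. Your proposed fix---fixing a base point and viewing the fiber conditional as ``a self-similar measure driven by the symbolic shift''---does not work: the fiber conditionals of a Markov Gibbs measure are not self-similar, and the translation parameters depend on the entire past, so you are outside the scope of any available theorem. The Ledrappier--Young style decomposition you sketch also does not match the actual mechanism: the relevant projections are along the random Oseledets directions encoded by the Furstenberg--Kifer measure (Theorem~\ref{thm:iffLyap}), not a static base/fiber split, and the base projection of $\mu_{s_0}$ has no reason to have dimension $d$.

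The paper supplies exactly the missing idea: instead of working with $\mu_{s_0}$ on the SFT, it approximates it in the entropy plus weak$^*$ topology by $q$-step Bernoulli measures supported on loops at a fixed recurrent symbol (Lemma~\ref{lem:qstepdense}, the Jordan--Rams construction). Each such measure \emph{is} a genuine self-affine measure for a finite IFS $\Phi$ consisting of compositions of the local inverses $\widetilde{F}_i$, and this IFS automatically satisfies SOSC because the base system is Markov. Rapaport's theorem then applies verbatim to give $\dim_H\mu=D(\mu)>s_0-\varepsilon$ for these approximants, and letting $\varepsilon\to 0$ closes the argument. So the route is: reduce to a full-shift sub-IFS first, then invoke Rapaport---not the other way around.
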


We will specify the Rapaport's condition later in Section~\ref{sec:tools}. In particular, Rapaport's condition holds if the Furstenberg-Kifer IFS, which can be deduced from the matrices $\{DF_i\}_{i=1}^M$ (see precise definition later in \eqref{eq:furstsys}), satisfies the HESC and $s_0$ is large (see Corollary~\ref{cor:raphoch}).

\subsection*{Organisation} In the second section, we collect all the tools we require for the proofs. Namely, in Section~\ref{sec:symb} we give the basic notations on the symbolic dynamics; in Section~\ref{subsec:subaddpres} we give the basic properties of the subadditive pressure function; in Section~\ref{sec:piece} we collect Hofbauer's results on piecewise monotone interval maps and the definition of the pressure $P_{\mathrm{Hof}}$, especially the approximation with Markov subsystems; in Section~\ref{sec:tools} we state recent results on the dimension theory of self-affine sets. Finally in Section~\ref{sec:approx}, we present a method (following Jordan and Rams) how to approximate Markov subsystems with $n$-step full shifts. After collecting all the required tools, we prove the upper bound of the Hausdorff dimension of $\Lambda$ for general $1$-dimensional systems, with non-Markovian piecewise monotone expanding interval maps in Lemma~\ref{lem:ubnonmarkov} of Section~\ref{sec3}. In Section~\ref{sec:high}, we prove the lower bound for systems with Markov base system (Theorem~\ref{thm:mainmark}), and by using this result, we prove the general base case in Section~\ref{sec5}. Finally, in Section~\ref{sec:ex}, we present some applications of our results, namely, for fractal interpolation functions (Section~\ref{sec:interpol}), for the multivariable- and the $\beta$-Takagi functions (Sections~\ref{sec:multitaka} and \ref{sec:betataka}).

\section{Preliminaries}

\subsection{Symbolic dynamics}\label{sec:symb}
In this section, we define the corresponding symbolic space to the dynamics in \eqref{eq:base} and \eqref{eq:defskew}. Let $d\geq1$ and let $\{I_i\}_{i=1}^M$ be a partition of the unit cube $[0,1]^d$ into cubes. Moreover, let $f_i:I_i\mapsto[0,1]^d$ and $f$ be defined as in Section~\ref{sec:intro}.

Let $\Sigma=\{1,\ldots,M\}^\N$ be the set of all infinite sequences of symbols $\{1,\ldots,M\}$. Denote $\Sigma^*$ the finite sequences. Let $\sigma$ be the usual left-shift operator on $\Sigma$, that is, $\sigma(i_0,i_1,\ldots)=(i_1,i_2,\ldots)$.

For a word $\ii=(i_0,i_1,\ldots)$, let $\ii|_n=(i_0,\ldots,i_{n-1})$, and for $\ii\in\Sigma^*$ let us denote the length of $\ii$ by $|\ii|$. Moreover, for a finite word $\jj=(j_0,\ldots,j_{n-1})\in\Sigma^*$ let
$[\jj]=\{\ii\in\Sigma: i_k=j_k\text{ for }k=0,\ldots,n-1\}$. For two finite or infinite words $\ii$ and $\jj$, let $\ii\wedge\jj$ denote the common part of $\ii$ and $\jj$, that is,  $\ii\wedge\jj=(k_1,\ldots,k_n)$, where $k_\ell=i_\ell=j_\ell$ for every $\ell=1,\ldots,n$ and $i_{n+1}\neq j_{n+1}$.

We note that whenever we refer to a probability measure on $\Sigma$, it is measurable with respect to the Borel $\sigma$-algebra generated by the cylinder sets. We call $Y\subseteq\Sigma$ a \emph{subshift} if it is compact w.r.t. the topology generated by the cylinder sets and $\sigma$-invariant.  For a subshift $Y$, let $$Y_n=\{\ii\in\{1,\ldots,M\}^n: [\ii]\cap Y\neq\emptyset\}.$$

We define the topological entropy,
$$
h_{top}(\sigma|_Y)=\lim_{n\to\infty}\frac{1}{n}\log\#Y_n,
$$
see \cite[Theorem~7.1 and Theorem~7.11]{Walters82}.

	Let us define the set of admissible words w.r.t. the map $f:[0,1]^d\mapsto [0,1]^d$. That is, let $X$ be the closure of the set
	\begin{equation}\label{eq:symbshift}
	\left\{(i_0,i_1,\dots)\in\Sigma:\exists x\in I\text{ such that }\forall n\geq0,\ f^n(x)\in I_{i_n}^o\right\}.
	\end{equation}
It is easy to see that $X$ is a subshift.

In order to connect the symbolic dynamics on $X$ with the dynamics of the map $f$, we define the natural projection $\pi\colon X\mapsto[0,1]^d$ so that
\begin{equation}\label{eq:natproj}
\pi(\ii)=\bigcap_{n=0}^{\infty}f^{-n}\left(\overline{I_{i_n}}\right).
\end{equation}
It is clear from the definition that $f$ and $\sigma$ are conjugated, that is, for every $\ii\in X$, $\pi\circ\sigma(\ii)=f\circ\pi(\ii)$.

We say that a subshift $Y$ is a \emph{subshift of finite type}, if there exists a finite set of forbidden words $T\subset\Sigma^*$ such that $\ii\in Y$ if and only if for every $k,n\geq0$, $(\sigma^k\ii)|_n\notin T$. We note that the set of forbidden words is not unique. We say that $Y$ is a type-$n$ subshift if $n$ is the smallest integer for which there exists a set of forbidden words such that the longest word has length at most $n+1$.

\begin{remark}\label{rem:newalpha}
We note that if $Y$ is a subshifts of type-$n$, then we can define a new alphabet $\mathcal{A}=\{1,\ldots,M^n\}$, and $\Psi\colon\{1,\ldots,M\}^n\mapsto\{1,\ldots,M^n\}$ (defined in the most natural way) and $\Psi'\colon\Sigma\mapsto\mathcal{A}^\N$ such that
for $\ii=(i_0,i_1,\ldots)$ then $\Psi'(\ii)=(\Psi(i_0,\ldots,i_n),\Psi(i_1,\ldots,i_{n+1}),\ldots)$. Moreover, there exists an $M^n\times M^n$ matrix $Q$ with elements $0$, $1$ such that
$\jj\in\Psi(Y)$ if and only if $Q_{j_\ell,j_{\ell+1}}=1$ for every $\ell=0,1,\ldots$. We call $Q$ the transition matrix.
\end{remark}

\subsection{Subadditive pressure on $\R^d$}\label{subsec:subaddpres}
Let $d\geq1$ and let $\{I_i\}_{i=1}^M$ be a partition of the unit cube $I:=[0,1]^d$ into cubes. Moreover, let $f_i:I_i\mapsto I$ and $f$ be defined as in Section~\ref{sec:intro}. We call a set $A\subset[0,1]^d$ \emph{invariant} if $f(A)\subseteq A$. Let us also define $F_i$ and $F$ as in Section~\ref{sec:intro}.

We say that a rectangle $R$ is axes parallel, if $R=R_x\times R_z$, where $R_x\subset\R^d$ and $R_z\subset\R^k$ are cubes. Recall that there exists a non-empty, open and bounded set $U$ such that $\bigcup_{i=1}^M\widetilde{F}_i(U)\subseteq U$, where $\widetilde{F}_i$ denotes the local inverse of $F_i$. Without loss of generality, we may assume that the set $U$ in \eqref{eq:U} can be chosen an axes parallel rectangle. It is easy to see that since $F_i$ are in skew product form then $\widetilde{F}_i$ has also a skew product form. That is,
	$$
	\widetilde{F}_i=(\widetilde{f}_i,\widetilde{g}_i),
	$$
	where $\widetilde{f}_i$ is a conformal, unif. contracting mapping on $\R^d$ such that
$$
f_i(x)=\widetilde{f}_i^{-1}(x)\text{ for }x\in I_i,
$$
 and $\widetilde{g}_i:\R^{d+k}\mapsto\R^k$ is an affine map such that for every $x\in I$, the mapping $\widetilde{g}_i(x,.)\colon\R^k\mapsto\R^k$ is a strictly contracting similitude for every $i=1,\ldots,M$. For the visualisation of the local inverses $\widetilde{F}_i, \widetilde{f}_i$, see Figure~\ref{a88b}.

Let $X$ be as in Section~\ref{sec:symb}. For $\ii\in\Sigma^*$, let $\widetilde{F}_{\ii}=\widetilde{F}_{i_0}\circ\cdots\circ\widetilde{F}_{i_n}$, and let $D\widetilde{F}_{\ii}$ be the linear part of the affine mapping $\widetilde{F}_{\ii}$. It is easy to see that $\widetilde{F}_{\ii}(U)$ is a parallelepiped. For $\ii\in X$, we call $\widetilde{F}_{\ii|_n}(U)$ the $n$th level cylinder set.

Similarly to $\pi$, we define the natural projection $\Pi\colon X\mapsto\R^{d+k}$ so that
$$
\Pi(\ii)=\bigcap_{n=1}^\infty\widetilde{F}_{\ii|_n}(\overline{U}).
$$

By using the symbolic expansion, we define a pressure, which is called the \emph{subadditive pressure} introduced by Falconer~\cite{falconer1988hausdorff}, which will be used in the Markov situation. First, denote $\phi^s$ the \emph{singular value function}: for a matrix $A$
$$
\phi^s(A)=\begin{cases}
\alpha_{\lceil s\rceil}(A)^{s-\lfloor s\rfloor}\prod_{j=1}^{\lfloor s\rfloor}\alpha_j(A) & \text{if }0\leq s\leq\mathrm{rank}(A), \\
|\det(A)|^{s/\mathrm{rank}(A)} & \text{if }\mathrm{rank}(A)<s,
\end{cases}
$$
where $\alpha_i(A)$ denotes the $i$th singular value of $A$. For any compact invariant set $B\subseteq[0,1]^d$, let
\begin{equation}\label{eq:pressuredefsymb}
P(s,B)=\lim_{n\to\infty}\frac{1}{n}\log\left(\sum_{\ii\in\pi^{-1}(B)_n}\phi^s\left(D\widetilde{F}_{\ii}\right)\right),
\end{equation}
where $\pi$ is the natural projection defined in \eqref{eq:natproj}, and thus, $\pi^{-1}(B)_n$ denotes all the admissible words with length $n$ in $B$. The pressure $s\mapsto P(s,B)$, defined in \eqref{eq:pressuredefsymb}, is the pressure we referred as $s\mapsto\pmar(s)$ in Theorem~\ref{thm:Markov} and Theorem~\ref{thm:triangularmarkov}.

We define the \emph{singularity dimension} over $B$ as the unique root of the equation
\begin{equation}\label{eq:modifpressure}
P(s,B)=0.
\end{equation}
Let us denote the unique root by  $s_0(B)$. The singularity dimension plays a natural role in the covering of the cylinder sets, which are ellipsoids, with balls, see Falconer~\cite{falconer1988hausdorff}. For completeness, we verify here the upper bounds. For simplicity, for a potential function $\psi\colon\Sigma\mapsto\R$ and $n\in\N$ denote $S_n\psi$ the Birkhoff sum, that is, $S_n\psi(\ii)=\sum_{k=0}^{n-1}\psi(\sigma^k\ii)$.

\begin{lemma}\label{lem:ubandadditive}
	Let $f_i$, $f$ and $F_i$, $F$ be as defined in Section~\ref{sec:intro} such that $\|D_{\proj(\xv)}f\|>\|\partial_2g(\xv)\|$ for every $\xv$. Then,
	\begin{equation}\label{eq:upperb}
	\dim_H\left\{\xv:\{F^n(\xv)\}_{n=0}^{\infty}\text{ is bounded}\right\}\leq s_0([0,1]^d),
	\end{equation}
	where $s_0([0,1]^d)$ is the root of the pressure defined in \eqref{eq:pressuredefsymb}. Moreover, for every $s>0$
\begin{equation}\label{eq:compare}
P(s,B)=\lim_{n\to\infty}\frac{1}{n}\log\left(\sum_{\ii\in\pi^{-1}(B)_n}e^{S_n\varphi^s(\pi(\ii))}\right),
\end{equation}
where $\varphi^s$ is the potential defined in \eqref{eq:potential}.
\end{lemma}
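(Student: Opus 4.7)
The plan is to first establish the pressure identity \eqref{eq:compare} by a direct singular-value computation, then derive the upper bound \eqref{eq:upperb} via Falconer's ellipsoid-covering argument.

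\textbf{Step 1: Block structure and singular values of $D\widetilde F_\ii$.} Because each $\widetilde F_i$ is an affine skew product, for every finite word $\ii$ of length $n$ the derivative $D\widetilde F_\ii$ is a constant block-lower-triangular matrix of the form
\[
D\widetilde F_\ii=\begin{pmatrix}\gamma_\ii^{-1}U_\ii^{-1} & 0 \\ C_\ii & \lambda_\ii^{-1}O_\ii^{-1}\end{pmatrix},
\]
with $\gamma_\ii^{-1}:=\prod_{j=0}^{n-1}\gamma_{i_j}^{-1}$, $\lambda_\ii^{-1}:=\prod_{j=0}^{n-1}\lambda_{i_j}^{-1}$, and $U_\ii,O_\ii$ orthogonal. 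Unrolling the recursion $C_\ii=c_{i_0}\gamma_{\sigma\ii}^{-1}U_{\sigma\ii}^{-1}+\lambda_{i_0}^{-1}O_{i_0}^{-1}C_{\sigma\ii}$ (where the single-step off-diagonal satisfies $\|c_i\|\leq K\lambda_i^{-1}\gamma_i^{-1}$ by affinity and compactness of the domain) and summing a geometric series in $\theta:=\max_i\lambda_i/\gamma_i$ (which is $<1$ by the standing assumption $\gamma_i>\lambda_i$) produces the uniform bound $\|C_\ii\|\leq K(1-\theta)^{-1}\lambda_\ii^{-1}$. A standard singular-value perturbation estimate for block-triangular matrices with separated diagonal scales then yields that the $k$ largest singular values of $D\widetilde F_\ii$ are uniformly comparable to $\lambda_\ii^{-1}$, and the remaining $d$ to $\gamma_\ii^{-1}$.

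\textbf{Step 2: Identification of the pressures.} Plugging the singular-value splitting from Step~1 into the definition of $\phi^s$ gives
\[
\phi^s(D\widetilde F_\ii)\asymp\begin{cases}\lambda_\ii^{-s}, & 0\leq s\leq k,\\ \lambda_\ii^{-k}\gamma_\ii^{-(s-k)}, & k<s\leq d+k,\end{cases}
\]
with multiplicative constants independent of $\ii$. Since $\|(\partial_2g(\xv))^{-1}\|=\lambda_i^{-1}$ and $\|(D_{\proj\xv}f)^{-1}\|=\gamma_i^{-1}$ on $I_i\times\R^k$, the definition \eqref{eq:potential} gives $\exp(S_n\varphi^s(\pi(\ii)))$ exactly equal to the right-hand side above. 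Substituting into \eqref{eq:pressuredefsymb}, dividing by $n$ and passing to the limit, the uniform multiplicative constant becomes an additive $O(1/n)$ correction that vanishes, yielding \eqref{eq:compare}.

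\textbf{Step 3: Upper bound by covering.} Any point with bounded forward $F$-orbit lies in $\Pi(X)\subseteq\bigcup_{\ii\in X_n}\widetilde F_\ii(\overline U)$ for every $n\geq1$. Each $\widetilde F_\ii(\overline U)$ is a parallelepiped whose $d+k$ semi-axis lengths are proportional to the singular values of $D\widetilde F_\ii$, so Falconer's ellipsoid-covering lemma supplies a cover by balls of a common diameter with $s$-sum at most $C\phi^s(D\widetilde F_\ii)$, for $C$ depending only on $d+k$ and $\mathrm{diam}(U)$. Choose $s>s_0([0,1]^d)$; by monotonicity of $P(\cdot,[0,1]^d)$ and the choice of $s_0$, $P(s,[0,1]^d)<0$, hence $\sum_{\ii\in X_n}\phi^s(D\widetilde F_\ii)$ decays exponentially in $n$ and the $s$-dimensional Hausdorff pre-measure of the bounded-orbit set is zero. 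Letting $s\searrow s_0([0,1]^d)$ yields \eqref{eq:upperb}.

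\textbf{Main obstacle.} The delicate point is Step~1: extracting uniform-in-$\ii$ asymptotics for the singular values of the block-triangular products $D\widetilde F_\ii$. The hypothesis $\|D_{\proj\xv}f\|>\|\partial_2g(\xv)\|$ is precisely what forces the scale gap $\gamma_\ii/\lambda_\ii$ to grow exponentially in $|\ii|$, preventing the off-diagonal block $C_\ii$ from contaminating either group of singular values; without it neither the geometric summation bounding $\|C_\ii\|$ nor the block-perturbation estimate would survive.
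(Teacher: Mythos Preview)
Your proof is correct and follows essentially the same route as the paper: both exploit the block lower-triangular form of $D\widetilde F_{\ii}$ together with the hypothesis $\gamma_i>\lambda_i$ to bound the off-diagonal block by a geometric sum (the paper's constant $c=\dfrac{\max_{\xv}\|\partial_1 g\|/\|\partial_2 g\|}{1-\min_{\xv}\|\partial_2 g\|/\|Df\|}$ is precisely your $K/(1-\theta)$), conclude that $\phi^s(D\widetilde F_{\ii})\asymp e^{S_n\varphi^s(\pi(\ii))}$, and then run the standard cylinder-covering argument. The only cosmetic difference is that the paper phrases Step~1 geometrically via the smallest enclosing axes-parallel rectangle $R(\ii,U)$ and an intermediate pressure $P_R$, whereas you extract the singular values directly by a block-perturbation estimate; these are two presentations of the same computation.
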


\begin{proof}
First, let us introduce an intermediate pressure.  Let $R(\ii,U)$ be the smallest closed axes parallel rectangle, which contains $F_{\ii}(U)$. Moreover, let
  $$
\phi_R^s(\ii,U):=\begin{cases}
                                                        |R(\ii,U)_z|^s, & \mbox{if } s\leq k \\
                                                        |R(\ii,U)_z|^k|R(\ii,U)_x|^{s-k}, & \mbox{if }k<s\leq d+k.
                                                      \end{cases}
 $$
For a compact invariant set $B$, let
 \begin{equation}\label{eq:pressuredefrec}
P_{R}(s,B):=\liminf_{n\to\infty}\frac{1}{n}\log\left(\sum_{\ii\in\pi^{-1}(B)_n}\phi^s_R\left(\ii,U\right)\right).
\end{equation}

Because of the skew-product structure of $F$ of conformal maps both in the base and in the fiber and because $\|D_{\proj(\xv)}f\|>\|\partial_2g(\xv)\|$ for every $i=1,\ldots,M$ and $\xv\in\overline{I_i}$, there exists a constant $c>0$ such that for every $n\geq0$ and $\ii\in X_n$,
$$
c^{-1}\phi^s(D\widetilde{F}_{\ii})\leq\phi^s_R(\ii,U)\leq c\phi^s(D\widetilde{F}_{\ii}),
$$
where the constant
$$
c=\frac{\max\limits_{\xv}\frac{\|\partial_1g(\xv)\|}{\|\partial_2g(\xv)\|}}{1-\min\limits_{\xv}\frac{\|\partial_2g(\xv)\|}{\|D_{\proj(\xv)}f\|}}.
$$
Thus,
$$
P(s,B)=P_{R}(s,B)\text{ for every }s\geq0.
$$

Observe that for every $\ii\in X$ and $n\geq1$, the cylinder set $\widetilde{F}_{\ii|_n}(\overline{U})$ can be covered by at most $\phi_R^s(\ii,U)$ many squares of side length at most $\gamma^n$, where $\gamma=\max_i\|\partial_2\widetilde{g}_i\|$. Hence,
$$
\mathcal{H}^s_{\gamma^n}(\Lambda)\leq\sum_{\ii\in X_n}\phi^s\left(D\widetilde{F}_{\ii|_n}\right).
$$
Thus, the proof of \eqref{eq:upperb} can be finished by letting $n\to\infty$.

Finally, by using again the skew product structure of $F$ and the assumption that $\|D_{\proj(\xv)}f\|>\|\partial_2g(\xv)\|$ for every $i=1,\ldots,M$ and $\xv\in\overline{I_i}$,
there exists a constant $c>0$, which can be chosen as the same constant in the previous estimate, such that for every $\ii\in X_n$ and every $n\geq1$ the ratio of the eigenvalues of $D\widetilde{F}_{\ii}$ and the side lengths of the rectangle $R(\ii,U)$ is bounded away from $0$ and infinity with $c$. In other words,
\begin{equation}\label{eq:compare2}
c^{-1}e^{S_n\varphi^s(\pi(\ii))}\leq\phi^s\left(D\widetilde{F}_{\ii|_n}\right)\leq ce^{S_n\varphi^s(\pi(\ii))},
\end{equation}
which finishes the proof.
\end{proof}

\subsection{Piecewise monotone maps}\label{sec:piece} A priori, the upper bound given in the previous section may be heavily suboptimal in the case of non-Markovian base systems. However, in our setup this is not the case. In order to present this, let us present here the basic notions and results for piecewise monotone interval maps following Hofbauer~\cite{Hofbauerfunda}.

Let $f\colon [0,1]\mapsto [0,1]$ be a uniformly hyperbolic, piecewise monotone interval map. We call a collection of connected intervals $\mathcal{D}$ a partition of $[0,1]$, if for every $I,J\in\mathcal{D}$ either $I\cap J=\emptyset$ or $I=J$, and $\bigcup_{I\in\mathcal{D}}I=[0,1]$. Let us denote by $\II=\{I_i\}$ the partition of $[0,1]$ w.r.t the monotonicity intervals of $f$. For two partitions $\DD, \DD'$ of $[0,1]$, we say that $\DD$ is \emph{finer} than $\DD'$ if for every $I\in\DD$ there exists $J\in\DD'$ such that $I\subseteq J$. We define the \emph{common refinement} $\DD\vee\DD'$ of two partitions $\DD,\DD'$ in the usual way, that is, $\DD\vee\DD'=\{I\cap J:I\in\DD,\ J\in\DD'\}$. We say that a partition $\II$ of monotonicity intervals is \emph{generating} if $\bigcup_{n=0}^{\infty}\bigcup_{I\in\II}f^{-n}(\partial I)$ is dense in $[0,1]$. Equivalently, if $\bigvee_{i=0}^\infty f^{-i}\II$ generates the Borel $\sigma$-algebra on $[0,1]$.

If $A$ is a $f$-invariant, compact set then let $h_{top}(f|_A):=h_{top}(\sigma|_{\pi^{-1}(A)})$.

Now, we introduce a special family of compact invariant sets. A compact invariant set $B$ is called \emph{Markov subset} if there exists a finite collection $\DD$ of closed intervals such that
\begin{enumerate}
  \item $J\subseteq \overline{I_i}$ for every $J\in\DD$ and some $i=1,\ldots,M$,
  \item $J_1^o\cap J_2^o=\emptyset$,
  \item $\bigcup_{J\in\DD}J\cap B=B$,
  \item either $f(J_1\cap B)\cap J_2\cap B=\emptyset$ or $ J_2\cap B\subseteq f(J_1\cap B)$ for every $J_1,J_2\in\DD$.
\end{enumerate}
We call $\DD$ the Markov partition of $B$. For a compact invariant set $A$, let us denote all the Markov subsets of $A$ by $\MM(A)$. For a Markov subset $B$ with Markov partition $\DD$, let $\DD_n$ denote the $n$th refinement of $\DD$ with respect to $f|_B$. That is,
$$
\DD_n=\left\{\bigcap_{i=0}^{n-1} f^{-i}(J_i\cap B):J_i\in\DD\right\}.
$$
Moreover, for a partition $\DD$ and an $x\in[0,1]$, denote $\DD(x)$ the unique element of $\DD$ for which $x\in\DD(x)$.

Let $\varphi\colon I\mapsto\R$ be a piecewise continuous potential function such that its continuity intervals contained in a refinement of $\II$. We define the \emph{pressure} of $\varphi$ with respect to a Markov subset $B$ such that
\begin{equation}\label{eq:markovpress}
  P(f|_B,\varphi)=\lim_{n\to\infty}\frac{1}{n}\log\sum_{J\in\DD_n}e^{\sup_{x\in J}S_n\varphi(x)}.
\end{equation}
Also, we can represent the pressure $P(f|_B,\varphi)$ in a symbolic way. Observe that $\DD$ defines a finite partition of $\pi^{-1}(B)$ w.r.t. cylinder sets.
\begin{equation}\label{eq:markovpresssymb}
  P(f|_B,\varphi)=\lim_{n\to\infty}\frac{1}{n}\log\sum_{\ii\in\pi^{-1}(B)_n}e^{\sup_{\jj\in[\ii]\cap\pi^{-1}(B)}S_n\varphi(\pi(\jj))}.
\end{equation}

We note that for a given Markov subset $B$, there are plenty of choice of the Markov partition but the value of the pressure does not depend on this choice.

\begin{proposition}\label{prop:joinmarkovs}
Let $f\colon [0,1]\mapsto [0,1]$ be a uniformly hyperbolic, piecewise monotone interval map. Let $A$ be a compact invariant, uncountable set such that $f|_A$ is topologically transitive and $h_{top}(f|_A)>0$. If $B_1,B_2\subseteq A$ are topologically transitive Markov subsets then there exists $B$ such that $B_1\bigcup B_2\subseteq B\subseteq A$ and $f|_B$ is topologically transitive.
\end{proposition}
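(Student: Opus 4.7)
The plan is to carry out the construction symbolically, exploiting topological transitivity of the ambient shift $\sigma|_{X_A}$, where $X_A = \pi^{-1}(A)$, in order to glue the symbolic presentations of $B_1$ and $B_2$ into a single transitive SFT inside $X_A$, and then pass back to intervals via $\pi$.

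First I would set up a common alphabet. Refine the Markov partitions $\DD_1, \DD_2$ into a finite collection $\DD$ of closed intervals with pairwise disjoint interiors such that every element of $\DD_1 \cup \DD_2$ is a union of elements of $\DD$; after passing to $n$-blocks for sufficiently large $n$ (as in Remark~\ref{rem:newalpha}), the Markov property together with the expansivity of $f$ force $\pi^{-1}(B_1)$ and $\pi^{-1}(B_2)$ to be SFTs inside $X_A$ on this refined alphabet, with transition matrices $Q_1$ and $Q_2$. Since $f|_A$ is topologically transitive, so is $\sigma|_{X_A}$, so for any chosen symbol $a$ in the support of $Q_1$ and any $b$ in the support of $Q_2$ one can find admissible words $u$ (from $a$ to $b$) and $v$ (from some symbol of $Q_2$ to some symbol of $Q_1$) in $X_A$. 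Define a new SFT $X$ by keeping every $Q_1$- and $Q_2$-transition and introducing fresh internal states for the interior letters of $u$ and $v$, so that the paths traced by $u$ and $v$ are the only transitions between the $Q_1$-states and the $Q_2$-states.

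By construction $X$ is an SFT, it is a subshift of $X_A$ (its admissible sequences project to admissible sequences in $X_A$), and it is topologically transitive, because from any state one can reach any other by concatenating paths within $Q_1$, along $u$, within $Q_2$, and along $v$. Setting $B := \pi(X)$ gives $B \subseteq A$, $B \supseteq B_1 \cup B_2$, and topological transitivity of $f|_B$ as a continuous factor of $\sigma|_X$. As a candidate Markov partition I would take $\DD$ together with the closed intervals of $\DD$ that sit underneath the new internal states of $u$ and $v$.

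The main obstacle is verifying axiom (4) of the Markov subset definition in Section~\ref{sec:piece} for this enlarged partition: for any two of its intervals $J_1, J_2$, either $f(J_1 \cap B) \cap J_2 \cap B = \emptyset$ or $J_2 \cap B \subseteq f(J_1 \cap B)$. In the SFT picture this dichotomy corresponds exactly to whether the transition from the state of $J_1$ to the state of $J_2$ is forbidden or allowed in $X$: if it is allowed, then any admissible sequence starting at the state of $J_2$ can be prolonged backward by the state of $J_1$, and this prolongation lifts, thanks to the injectivity of $\pi$ on cylinders induced by the expansivity of $f$, to an actual $f$-preimage in $J_1 \cap B$ of any given point of $J_2 \cap B$. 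The remaining technical work is to confirm that the refinement from $\DD_1, \DD_2$ to $\DD$ and the block-recoding preserve the SFT structure of $\pi^{-1}(B_i)$, and that adding the fresh internal states does not create unintended identifications; both points follow from uniform expansivity together with the original Markov property of $\DD_1$ and $\DD_2$.
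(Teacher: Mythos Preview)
The paper does not actually prove this proposition: it simply cites \cite[Lemma~7 and Lemma~8]{hofbauerergod} and moves on. So there is no in-paper argument to compare against, only the external reference to Hofbauer.

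Your symbolic gluing sketch is the natural construction and is in the spirit of Hofbauer's approach: lift $B_1,B_2$ to transitive SFTs inside $X_A$, use transitivity of $\sigma|_{X_A}$ to find connecting words between them, take the SFT generated by the two transition graphs together with the connecting paths, and push back down via $\pi$. Two points deserve a little more care than you give them. First, the ``fresh internal states'' for the interiors of $u$ and $v$ are really a passage to a higher block presentation; on the interval side this means your candidate Markov partition for $B$ will consist of cylinder intervals from some $\II_n$ with $n$ possibly larger than the $n$ you used to recode $B_1,B_2$, and you should say explicitly that this refinement is compatible with the original partitions (it is, because $\II_n$ refines $\II_m$ for $n\ge m$). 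Second, and more substantively, you need the intermediate symbols of $u$ and $v$ not to accidentally coincide with symbols already carrying $Q_1$- or $Q_2$-transitions, or else the SFT you generate could leak outside $X_A$; the fresh-state trick handles this, but you should check that after recoding the resulting subshift is still contained in $X_A$ (equivalently, that every finite word it produces is $X_A$-admissible). Once those are pinned down, property~(4) does follow from the allowed/forbidden dichotomy exactly as you indicate. Note also that the proposition as stated does not assert $B$ is Markov, but the application in Section~\ref{sec5} needs it to be, and your construction delivers that.
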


The proof of the proposition can be found in \cite[Lemma~7 and Lemma~8]{hofbauerergod}.

We define the pressure of $\varphi$ over a compact invariant set $A$ as the supremum over all Markov subsets. That is,
\begin{equation}\label{eq:wholepres}
P(f|_A,\varphi)=\sup_{B\in\MM(A)}P(f|_B,\varphi).
\end{equation}

For a compact invariant set $A$, let $\mu$ be a probability measure such that $\mathrm{supp}(\mu)=A$. For a point $x\in[0,1]$ set
$$
\Delta_\rho(x):=\{n\geq 1:\mu(f^n(\II_n(x)))>\rho\}.
$$
Let $N_\rho(A,\mu)=\{x\in A:\#\Delta_\rho(x)<\infty\}$.

\begin{proposition}\label{prop:approxgood}
Let $f\colon [0,1]\mapsto [0,1]$ be a uniformly hyperbolic, piecewise monotone interval map. Let $A$ be a compact invariant, uncountable set such that $f|_A$ is top. transitive. Then for every $\mu$ probability measure with $\mathrm{supp}(\mu)=A$, $$\lim_{\rho\to0+}\dim_HN_\rho(A,\mu)=0.$$
\end{proposition}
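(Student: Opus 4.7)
The strategy is to cover $N_\rho(A,\mu)$ by monotonicity cylinders of exponentially shrinking diameter, interpret the count on Hofbauer's canonical Markov diagram $G$, and argue that the subshift obtained from $G$ by excising the heavy vertex set $\mathcal{D}_\rho := \{D \in \mathcal{D} : \mu(D) > \rho\}$ has topological entropy tending to zero as $\rho \to 0$. Decompose $N_\rho(A,\mu) = \bigcup_{N \geq 1} N_\rho^N$ with
\[
N_\rho^N := \{x \in A : \mu(f^n(\mathcal{I}_n(x))) \leq \rho \text{ for all } n \geq N\};
\]
by countable stability of Hausdorff dimension it suffices to bound $\dim_H N_\rho^N$ by a quantity independent of $N$. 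Uniform expansion supplies $\gamma > 1$ so that every $J \in \mathcal{P}_n := \bigvee_{i=0}^{n-1} f^{-i}\mathcal{I}$ has diameter at most $C\gamma^{-n}$, and
\[
\mathcal{J}_n := \{J \in \mathcal{P}_n : \mu(f^k(\mathcal{I}_k(J))) \leq \rho \text{ for } N \leq k \leq n\}
\]
covers $N_\rho^N$, yielding $\dim_H N_\rho^N \leq h(\rho)/\log\gamma$, where $h(\rho) := \limsup_n n^{-1}\log\#\mathcal{J}_n$.

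Next, each $J \in \mathcal{P}_n$ corresponds bijectively to a length-$n$ path in $G$ that visits the basic interval $f^k(\mathcal{I}_k(J)) \in \mathcal{D}$ at step $k$, so $\mathcal{J}_n$ corresponds to those paths that stay in $G \setminus \mathcal{D}_\rho$ from step $N$ onward, and hence $h(\rho)$ is bounded by the Gurevich topological entropy of the countable-state subshift $\Sigma_{G \setminus \mathcal{D}_\rho}$. Since $\mathcal{D}_\rho \uparrow \mathcal{D}^+ := \{D \in \mathcal{D} : \mu(D) > 0\}$ as $\rho \downarrow 0$, a standard continuity argument via the loop-component structure of $G$ developed in \cite{Hofbauerfunda, hofbauerergod} reduces the proof to establishing $h_{top}(\Sigma_{G \setminus \mathcal{D}^+}) = 0$.

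This last step is the main obstacle. By the variational principle for countable-state Markov shifts, positivity of $h_{top}(\Sigma_{G \setminus \mathcal{D}^+})$ would be witnessed by a shift-invariant measure $\nu$ supported on a recurrent irreducible loop component $\Gamma \subseteq G \setminus \mathcal{D}^+$ with $h_\nu > 0$; via Hofbauer's correspondence between loop components of the Markov diagram and topologically transitive Markov subsets, $\Gamma$ yields a transitive $B \in \mathcal{M}(A)$ with $h_{top}(f|_B) > 0$. Topological transitivity together with uncountability of $B$ force the Markov intervals of $B$ to contain non-empty open pieces of $A$, which carry strictly positive $\mu$-measure because $\mathrm{supp}(\mu) = A$; this contradicts their placement in $G \setminus \mathcal{D}^+$. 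Consequently every such $\nu$ has vanishing entropy, $h(\rho) \to 0$, and the proposition follows. The technical heart of the argument is justifying the loop-component extraction and the continuity step on the possibly infinite diagram, where the structural results on piecewise monotone maps in \cite{Hofbauerfunda, hofbauerergod, hofbauer_raith} are the essential input.
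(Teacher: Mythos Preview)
The paper gives no self-contained proof here; it simply cites \cite[Lemma~14]{Hofbauerfunda} and notes that uniform hyperbolicity places one in the setting of that lemma. Your outline is therefore not an alternative route but a reconstruction of Hofbauer's argument, and its architecture---lift to the Markov diagram, bound the number of admissible cylinders by the number of diagram paths avoiding the heavy vertex set $\mathcal{D}_\rho$, then show this count has vanishing exponential growth as $\rho\to0$---is exactly the mechanism behind that lemma.

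There is, however, a genuine gap in the sketch as written. Your family $\mathcal{J}_n$ ranges over all $J\in\mathcal{P}_n$ with no requirement that $J\cap A\neq\emptyset$, so $\#\mathcal{J}_n$ counts paths in the Markov diagram of $f$ on all of $[0,1]$, not of $f|_A$. For that larger count the assertion $h_{top}(\Sigma_{G\setminus\mathcal{D}^+})=0$ can simply fail: when $A\subsetneq[0,1]$ there may be positive-entropy loop components living entirely among vertices $D$ with $D^o\cap A=\emptyset$ (equivalently $\mu(D)=0$), and your contradiction step---claiming such a component yields $B\in\mathcal{M}(A)$ whose Markov intervals contain relatively open pieces of $A$---breaks down because nothing you have written forces $B\subseteq A$. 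The repair is to restrict from the outset to cylinders meeting $A$: for $x\in J\cap A$ one has $f^k(x)\in D_k\cap A$ at every step, so one is effectively on the Markov diagram associated to $\pi^{-1}(A)$, and there Hofbauer's loop-component correspondence does produce Markov subsets of $A$, after which your contradiction goes through. With that correction (and the continuity step you already flag as deferred to \cite{Hofbauerfunda,hofbauerergod}), your plan coincides with the cited argument.
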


The proof the proposition is the application of \cite[Lemma~14]{Hofbauerfunda} for uniformly hyperbolic, piecewise monotone maps.

 We say that a probability measure $\mu$ is \emph{$\varphi$-conformal over a compact invariant set $A$} if $\mathrm{supp}(\mu)=A$ and
 \begin{equation}\label{eq:confmeas}
 \mu(f(I))=\int_Ie^{P(f|_A,\varphi)-\varphi}d\mu\text{ for every }I\in\bigcup_{n=0}^\infty \II_n,
 \end{equation}
where $\varphi\colon I\mapsto\R$ is a piecewise continuous potential such that the continuity intervals contained in a refinement of $\II$. Since $f$ is hyperbolic, the partition $\II$ is generating. Thus, we get
 $$
 \mu(f^n(I))=\int_I e^{nP(f|_A,\varphi)-S_n\varphi}d\mu\text{ for every }I\in\bigcup_{n=0}^\infty \II_n.
 $$

\begin{theorem}\label{thm:existconf}
Let $f\colon [0,1]\mapsto [0,1]$ be a uniformly hyperbolic, piecewise monotone interval map with monotonicity intervals $\II$. Let $\varphi\colon [0,1]\mapsto\R$ be a piecewise continuous potential function such that its continuity intervals contained in a refinement of $\II$. Then for every compact invariant, uncountable set $A$, for which $f|_A$ topologically transitive, there exists a $\varphi$-conformal, non-atomic probability measure over $A$.
\end{theorem}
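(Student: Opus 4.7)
The plan is to construct the $\varphi$-conformal measure by approximating $A$ from within by topologically transitive Markov subsets, building a conformal measure on each via the Ruelle--Perron--Frobenius theorem for subshifts of finite type, and passing to a weak-$*$ limit. The key combinatorial ingredient is Proposition~\ref{prop:joinmarkovs}, which allows any two topologically transitive Markov subsets of $A$ to be joined into a larger one, together with the definition~\eqref{eq:wholepres} of $P(f|_A,\varphi)$ as a supremum over Markov subsets.

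First, using \eqref{eq:wholepres} I would choose transitive Markov subsets $C_n \subseteq A$ with $P(f|_{C_n},\varphi) \to P(f|_A,\varphi)$, and then apply Proposition~\ref{prop:joinmarkovs} inductively to obtain a nested chain $B_1 \subseteq B_2 \subseteq \cdots \subseteq A$ of transitive Markov subsets with $C_n \subseteq B_n$, so that $P(f|_{B_n},\varphi) \to P(f|_A,\varphi)$. By Remark~\ref{rem:newalpha} each $B_n$ corresponds symbolically to a topologically transitive subshift of finite type, and the lifted potential $\varphi \circ \pi$ is locally constant on cylinders of some finite level because the continuity intervals of $\varphi$ form a refinement of $\II$. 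The classical Ruelle--Perron--Frobenius theorem then yields a non-atomic, $\varphi$-conformal probability measure $\mu_n$ with $\mathrm{supp}(\mu_n) = B_n$ satisfying~\eqref{eq:confmeas} with $P(f|_{B_n},\varphi)$ in place of $P(f|_A,\varphi)$.

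Next, I would extract a weak-$*$ accumulation point $\mu$ of $\{\mu_n\}$. Since $B_n \subseteq A$ are nested, $\mathrm{supp}(\mu) \subseteq A$; the reverse inclusion holds because any point of $A$ outside $\overline{\bigcup_n B_n}$ would lie in a relatively open subset of $A$ supporting its own transitive Markov subset (by topological transitivity and hyperbolicity of $f|_A$), which when joined with some $B_n$ via Proposition~\ref{prop:joinmarkovs} would produce a strictly larger transitive Markov subset, contradicting the maximality of the sequence. To pass~\eqref{eq:confmeas} to the limit on a cylinder $I \in \II_k$, I would test it against continuous functions supported in $I^o$ avoiding the countable discontinuity set $\mathfrak{S}_\infty$; boundedness of $\varphi$ on $I$, the convergence $P(f|_{B_n},\varphi) \to P(f|_A,\varphi)$, and monotone approximation then yield~\eqref{eq:confmeas} for $\mu$.

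Finally, non-atomicity of $\mu$ follows because an atom $\mu(\{x\}) > 0$ would, via~\eqref{eq:confmeas} applied to cylinders shrinking to $x$, force atoms at each of the $f$-preimages of $x$ in $A$ with masses uniformly bounded below by $e^{-P(f|_A,\varphi) + \min\varphi}\mu(\{x\})$; iterating this and using that topological transitivity with uncountable $A$ makes the full backward orbit of $x$ in $A$ infinite yields a contradiction with $\mu$ being a probability measure. The main obstacle I expect is the limit step: establishing~\eqref{eq:confmeas} for $\mu$ without losing mass across the countable discontinuity set $\mathfrak{S}_\infty$, whose boundary contributions appear in the Markov partitions of the $B_n$. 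The way to handle this is to first prove the conformal identity on cylinders $I$ with $\mu(\partial I) = 0$, then use the non-atomicity argument above to rule out atoms on $\mathfrak{S}_\infty$, and finally extend the identity to arbitrary $I \in \bigcup_n \II_n$ by approximation from within.
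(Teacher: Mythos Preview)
The paper does not give its own proof of this theorem: immediately after the statement it cites \cite[Theorem~2]{hofbauerurbanski} and \cite[p.~118]{Hofbauerfunda}. In those references the conformal measure is built via the \emph{Markov diagram} (Hofbauer tower): one lifts $f$ to a countable-state topological Markov chain, constructs the eigenmeasure of the transfer operator there, and pushes it down to $[0,1]$. Your approximation-by-finite-Markov-subsets route is therefore a genuinely different strategy. It is a natural idea, and parts of it work, but as written there are two concrete gaps.

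\textbf{Support.} You invoke a contradiction with ``the maximality of the sequence'', but you never arranged the $B_n$ to be maximal; you only arranged $P(f|_{B_n},\varphi)\to P(f|_A,\varphi)$, and pressure convergence does not force $\overline{\bigcup_n B_n}=A$. This is easily repaired: enumerate a countable dense family of transitive Markov subsets of $A$ (e.g.\ periodic orbits, which are dense by transitivity and hyperbolicity) and fold them into the $B_n$ one at a time using Proposition~\ref{prop:joinmarkovs}.

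\textbf{Non-atomicity.} Your argument says an atom at $x$ forces atoms at all $f$-preimages of $x$ with mass at least $e^{\min\varphi-P}\mu(\{x\})$, and then that an infinite backward orbit gives a contradiction. But $e^{\min\varphi-P}$ may well be strictly less than $1$, so the level-$n$ preimage masses are only bounded below by $\bigl(e^{\min\varphi-P}\bigr)^{n}\mu(\{x\})$, which is summable; an infinite backward orbit is not by itself a contradiction. What one actually needs is that the \emph{total} mass at level $n$, namely $\mu(\{x\})\sum_{y\in f^{-n}(x)\cap A}e^{S_n\varphi(y)-nP}$, does not decay --- and this is exactly a statement about the pressure $P(f|_A,\varphi)$ being realised at $x$, which is not automatic for an arbitrary atom. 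The Hofbauer--Urba\'nski argument handles non-atomicity on the tower, where the combinatorics are transparent, precisely to avoid this issue.

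There is also the circularity you flag between passing \eqref{eq:confmeas} to the weak-$*$ limit and proving non-atomicity. Your proposed fix (first on cylinders with $\mu(\partial I)=0$, then non-atomicity, then extend) still needs conformality on intervals shrinking to the putative atom $x$, whose dynamical-cylinder boundaries you do not control. One way out is to test the Jacobian identity not on dynamical cylinders but on arbitrary subintervals $J\subset I_i$ whose endpoints (and the endpoints of $f(J)$) avoid the countable set of atoms; conformality of $\mu_n$ holds on all such $J$, and Portmanteau applies. This rescues the limit step, but you are then still left with the quantitative non-atomicity problem above.
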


This theorem is a special version of \cite[Theorem~2]{hofbauerurbanski} in the uniformly hyperbolic setting. The proof of the theorem coincides with the verification on \cite[p. 118]{Hofbauerfunda}.

Throughout the paper, we usually work with the potential $\varphi^s$ defined in \eqref{eq:potential}. By reformulating \eqref{eq:compare}, we get$$P(f|_B,\varphi^s)=P(s,B)$$ for every Markov subset $B$ and $s\in[0,\infty)$, where $P(s,B)$ is defined in \eqref{eq:pressuredefsymb}.

Moreover, the pressure $s\mapsto P(f|_I,\varphi^s)$, defined in \eqref{eq:wholepres}, is the pressure we referred as $s\mapsto\phof(s)$ in Theorem~\ref{thm:maindiag} and Theorem~\ref{thm:maintriang}.

\begin{remark}\label{rem:presspectra}
	Let $B$ be a Markov subset such that $\pi^{-1}(B)$ is a subshift of type-1, and for every $I_i\cap B$ let $\xv_i\in(I_i\cap B)\times\R^k$. Then we can define a matrix $A^{(s)}$ such that
	$$
	A_{i,j}^{(s)}=\begin{cases} \|(\partial_2g(\xv_i))^{-1}\|^k\|(D_{\proj(\xv_i)}f)^{-1}\|^{s-k} & \text{if }I_j\cap B\subseteq f(I_i\cap B)\\
	0 & \text{otherwise.}\end{cases}
	$$
	Then $P(s,B)=\rho(A^{(s)})$, where $\rho(A)$ denotes the spectral radius of $A$.
	
	By Remark~\ref{rem:newalpha}, every subshifts of type-$n$ can be corresponded to a type-1 subshift by defining a new alphabet, and subdividing the monotonicity intervals into smaller intervals.
\end{remark}

Let us finish this subsection with the variational principle over Markov subsets. For a compact invariant set $B$, let us denote collection of all $\sigma$-invariant measures on $\pi^{-1}(B)$ by $\mathcal{P}_{\rm inv}(B)$, and similarly, the set of ergodic $\sigma$-invariant measures by $\mathcal{P}_{\rm erg}(B)$.

\begin{lemma}\label{lem:vari}
Let $f\colon [0,1]^d\mapsto [0,1]^d$ be a uniformly hyperbolic, piecewise linear and conformal map and let $B$ be a Markov set. Let $s_0$ be the root of the pressure $s\mapsto P(s,B)$, defined in \eqref{eq:modifpressure}. Then
$$
s_0(B)=\max_{\mu\in\mathcal{P}_{\rm erg}(B)}D(\mu),
$$
where $D(\mu)$ is the Lyapunov dimension of $\mu$.
\end{lemma}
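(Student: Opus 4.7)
The plan is to apply the classical variational principle to the topological pressure $P(s,B)$ on the subshift of finite type associated to $B$. Because $f$ is piecewise linear and conformal, the potential $\varphi^{s}$ defined in \eqref{eq:potential} is constant on each element $I_i$ of the monotonicity partition, so its symbolic lift $\varphi^{s}\circ\pi$ is locally constant (hence Hölder) on the SFT $\pi^{-1}(B)$. By Remark~\ref{rem:newalpha}, after recoding we may assume $\pi^{-1}(B)$ is a type-$1$ SFT on a finite alphabet, so standard thermodynamic formalism applies.

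The identity \eqref{eq:compare} from Lemma~\ref{lem:ubandadditive}, combined with \eqref{eq:markovpresssymb}, identifies $P(s,B)$ with the classical topological pressure of $\varphi^{s}\circ\pi$ with respect to $\sigma$ on $\pi^{-1}(B)$. Thus we obtain an ergodic equilibrium state $\mu^{\ast}\in\mathcal{P}_{\rm erg}(B)$ realising
$$
0=P(s_{0},B)=h_{\mu^{\ast}}+\int\varphi^{s_{0}}\,d\mu^{\ast},
$$
and the variational inequality $h_{\mu}+\int\varphi^{s}\,d\mu\le P(s,B)$ for every $\mu\in\mathcal{P}_{\rm erg}(B)$ and every $s\ge 0$. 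I would then evaluate $\int\varphi^{s}\,d\mu$ in closed form: since $\varphi^{s}$ is piecewise constant,
$$
\int\varphi^{s}\,d\mu=\begin{cases}-s\,\chi_{2}(\mu)&\text{if }0\le s\le k,\\ -k\,\chi_{2}(\mu)-(s-k)\,\chi_{1}(\mu)&\text{if }k<s\le d+k.\end{cases}
$$

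Substituting $\mu=\mu^{\ast}$ into the equation $P(s_{0},B)=0$ and solving for $s_{0}$ gives either $s_{0}=h_{\mu^{\ast}}/\chi_{2}(\mu^{\ast})$ (when $s_{0}\le k$) or an affine relation equivalent to the second branch in the definition \eqref{eq:lyapdim} of $D(\mu^{\ast})$ (when $s_{0}>k$). A short case analysis using $\chi_{1}(\mu^{\ast})>\chi_{2}(\mu^{\ast})>0$ shows that the \emph{other} branch in \eqref{eq:lyapdim} is $\ge s_{0}$, so the minimum defining $D(\mu^{\ast})$ equals $s_{0}$.

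For the reverse inequality $D(\mu)\le s_{0}$ for an arbitrary $\mu\in\mathcal{P}_{\rm erg}(B)$, I would use $0=P(s_{0},B)\ge h_{\mu}+\int\varphi^{s_{0}}\,d\mu$, which reads $h_{\mu}\le s_{0}\chi_{2}(\mu)$ or $h_{\mu}-k\chi_{2}(\mu)\le(s_{0}-k)\chi_{1}(\mu)$ depending on the case; either way one of the two branches in \eqref{eq:lyapdim} is bounded above by $s_{0}$, and hence so is $D(\mu)$. Taking the supremum over $\mu$ finishes the proof, with the maximum attained at the equilibrium state $\mu^{\ast}$. The main subtlety is bookkeeping at the kink $s=k$: one must track which branch of $\varphi^{s}$ controls $P(s_{0},B)=0$ and match it with the corresponding branch in $D(\mu)$, ensuring compatibility of the two piecewise definitions and using the strict inequality $\chi_{1}>\chi_{2}$ to conclude that the other branch is not smaller.
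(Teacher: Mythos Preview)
Your proposal is correct and follows essentially the same route as the paper: both identify $P(s,B)$ with the classical pressure of the locally constant potential $\varphi^{s}\circ\pi$ on the SFT $\pi^{-1}(B)$, invoke Bowen's thermodynamic formalism to obtain an ergodic equilibrium state realising $P(s_0,B)=0$, and read off $D(\mu^{\ast})=s_0$ from the resulting entropy--Lyapunov identity. The paper simply declares the inequality $\sup_{\mu}D(\mu)\le s_0(B)$ to be ``straightforward'' and then cites \cite[Theorems~1.2 and 1.22]{Bowen75} for the equilibrium state, whereas you spell out both directions and the case analysis at $s=k$ explicitly; otherwise the arguments coincide.
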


\begin{proof} It is straightforward that
$$
\sup_{\mu\in\mathcal{P}_{\rm erg}(B)}D(\mu)\leq s_0(B).
$$
Thus, it is enough to show that there exists a measure $\mu$, for which equality holds.

However, the potential $\ii\mapsto\varphi^s(\pi(\ii))$ is piecewise constant and thus, H\"older continuous on $\pi^{-1}(B)$. Hence, by \cite[Theorem~1.2]{Bowen75} and Lemma~\ref{lem:ubandadditive}\eqref{eq:compare}, there exist a constant $C>0$ and a unique ergodic measure such that
$$
C^{-1}\leq\frac{\mu_{s}[\ii|_n]}{e^{-P(s,B)n+S_n(\ii)}}\leq C.
$$
By \cite[Theorem~1.22]{Bowen75},
\begin{equation}\label{eq:varia}
P(s,B)=h_{\mu_{s}}+\int\varphi^s(\pi(\ii))\mathrm{d}\mu_{s}(\ii)=h_{\mu_{s}}-\min\{s,d\}\chi_1(\mu_{s})-\max\{s-d,0\}\chi_2(\mu_{s}).
\end{equation}
Thus, by using the definition of $s_0(B)$, we get
$$
s_0(B)=D(\mu_{s_0(B)}).
$$
\end{proof}

\subsection{Tools for the dimension theory of self-affine sets}\label{sec:tools}
In this section, we state the results in the dimension theory of triangular self-affine iterated function systems (IFS), which we are going to use later. Let $\Phi=\{\widetilde{F}_i:\R^{d}\times\R^{k}\mapsto\R^{d}\times\R^{k}\}_{i=1}^N$ be a finite collection of contracting affine transformations such that
\begin{equation}\label{eq:gentriang}
\widetilde{F}_i(\xv,\zv)=\left(\gamma_iU_i\xv+\vv_i,\lambda_iO_i\zv+B_i\xv+\wv_i\right),
\end{equation}
where $1>\lambda_i>\gamma_i>0$, $U_i\in O(d)$, $O_i\in O(k)$, $A_i\in\R^{k\times d}$, $\vv_i\in\R^d$ and $\wv_i\in\R^k$ for every $i=1,\ldots,N$.

Denote the attractor of $\Phi$ by $\Lambda$. Moreover, for a probability vector $\pv=(p_i)_{i=1}^N$ let $\mu$ be the self-affine measure. In the study of the dimension theory of self-affine measures, the Furstenberg-Kifer measure and Ledrappier-Young formula plays an important role. In this section, we state the corresponding definitions and theorems.

First, let us define the Furstenberg-Kifer measure, which is supported in the Grassmannian manifold of $d$-dimensional subspaces of $\R^{d+k}$. Let us denote the Grassmannian manifold by $G(d,d+k)$. Let $\nu$ be a Bernoulli measure on $\Sigma$ with probability vector $\pv$. It is easy to see that in this case there are only two Lyapunov exponents $\chi_2=-\sum_ip_i\log\gamma_i>\chi_1=-\sum_{i}p_i\log\lambda_i>0$. By Oseledets' multiplicative theorem \cite[Theorem~3.4.1]{Arnold} there exists a unique measurable map $V:\Sigma\mapsto G(d,d+k)$ such that
\begin{eqnarray}
  V(\ii) &=& A_{i_0}^{-1}V(\sigma\ii)\label{eq:inv} \\
  \lim_{n\to\infty}\frac{1}{n}\log\|A_{i_n}\cdots A_{i_0}\vv\| &=& \chi_2,
\end{eqnarray}
for $\nu$-almost every $\ii\in\Sigma$ and $\vv\in V(\ii)$. We call the measure $\mu_F=V_*\nu$ {\em the Furstenberg-Kifer measure}. We show that in the case of IFS of the form \eqref{eq:gentriang}, the mapping $V:\Sigma\mapsto G(d,d+k)$ is H\"older continuous, everywhere defined mapping. We give the heuristic way to define it in the simplest case $d=k=1$, where $G(1,2)=\RP$.

For matrices $A_i$ of the form
\begin{equation}\label{eq:conj}
A_i=\left(\begin{array}{cc} \gamma_i & 0 \\ b_i & \lambda_i\end{array}\right),
\end{equation}
we have
$$
A_i^{-1}\binom{1}{x}=\frac{1}{\gamma_i}\binom{1}{\frac{\gamma_i}{\lambda_i}x-\frac{b_i}{\lambda_i}}.
$$
Since $\gamma_i/\lambda_i<1$, the IFS $\{h_i:x\mapsto\frac{\gamma_i}{\lambda_i}x-\frac{b_i}{\lambda_i}\}$ is strictly contracting, and the limit
$$
v(\ii)=\lim_{n\to\infty}h_{i_0}\circ\cdots\circ h_{i_n}(0)
$$
is well defined for every $\ii\in\Sigma$. Moreover, $v\colon\Sigma\mapsto\R$ is H\"older continuous. In other words, the action of $A_i^{-1}$ is contracting on $\RP\setminus\{\binom{0}{1}\}$ with respect to a well chosen metric. Thus, by using the invariance of $V(\ii)$ and the uniqueness, $V(\ii)=\mathrm{span}\{\binom{1}{v(\ii)}\}$.

In the general situation, the Furstenberg-Kifer measure can be associated with a self-similar measure on $\R^{dk}$. Let $E=\{V\in G(d,d+k): \dim V\cap W\geq1\}$, where $W$ is the $k$ dimensional invariant subspace w.r.t. the matrices $A_i$. That is, $W=\mathrm{span}\{\hat{\ev}_\ell\}_{\ell=d+1}^{d+k}$, where $\hat{\ev}_\ell$ is the $\ell$th element of the natural basis of $\R^{d+k}$. One can associate the set $G(d,d+k)\setminus E$ with the set
$$
\left\{\bigwedge_{\ell=1}^d\binom{\ev_{\ell}}{\xv_{\ell}}:\xv_\ell\in\R^k,\ \ell=1,\dots,d\right\}\subset \wedge^d\R^{d+k},
$$
which can be associated with $\R^{dk}$. Let $U_i^T\otimes O_i^T$ denote the usual Kronecker product of the matrices $U_i^T, O_i^T$  . That is, $U_i^T\otimes O_i^T$ is the $dk\times dk$ blockmatrix defined as
$$
U_i^T\otimes O_i^T=\left(\begin{array}{cccc}
u_{1,1}^{(i)}O_i^T & u_{2,1}^{(i)}O_i^T & \cdots & u_{d,1}^{(i)}O_i^T \\
u_{1,2}^{(i)}O_i^T & u_{2,2}^{(i)}O_i^T & \cdots & u_{d,2}^{(i)}O_i^T \\
\vdots & \vdots &\ddots & \vdots \\
u_{1,d}^{(i)}O_i^T & u_{2,d}^{(i)}O_i^T & \cdots & u_{d,d}^{(i)}O_i^T
\end{array}\right),
$$
where $U_i=(u_{m,\ell}^{(i)})_{m,\ell=1}^d$. It is easy to see that $U_i^T\otimes O_i^T$ is also an orthogonal matrix. Associated to the system defined in \eqref{eq:gentriang}, let
\begin{equation}\label{eq:furstsys}
h_i(\xvv)=\frac{\gamma_i}{\lambda_i}U_i^T\otimes O_i^T\xvv+\tvv_i,
\end{equation}
for $i=1,\dots,M$ and $\xvv\in\R^{dk}$, where
$$
\tvv_i=\frac{-1}{\lambda_i}\left(\begin{array}{c}O_i^TB_i\ev_{1} \\ \vdots \\ O_i^TB_i\ev_{d}\end{array}\right),
$$
where $\ev_\ell$ is the $\ell$th element of the natural basis of $\R^d$. We call the IFS $\{h_i\}_{i=1}^M$ the \emph{Furstenberg-Kifer IFS}. Similarly, to the previous calculations,
$$
\left(\wedge^dA_i^{-1}\right)\bigwedge_{\ell=1}^d\binom{\ev_{\ell}}{\xv_{\ell}}=\frac{1}{\gamma_i^d}\bigwedge_{\ell=1}^d\binom{\ev_{\ell}}{\zv_{\ell}},
$$
where $h_i(\xv_1^T,\ldots,\xv_d^T)^T=(\zv_1^T,\ldots,\zv_d^T)^T$. Since $\gamma_i/\lambda_i<1$, the IFS $\{h_i\}_{i=1}^M$ is strictly contracting on $\R^{dk}$, hence
$$
\left(\begin{array}{c}v_1(\ii) \\ \vdots \\ v_d(\ii)\end{array}\right)=\lim_{n\to\infty}h_{i_0}\circ\cdots h_{i_n}(\boldsymbol{0})
$$
is well defined, and by the uniqueness of $V(\ii)$, we have $V(\ii)=\mathrm{span}\left\{\binom{\ev_{\ell}}{v_{\ell}(\ii)}\right\}_{\ell=1}^d$. The measure $\mu_F=V_*\nu$ is called the Furstenberg-Kifer measure.

Let us define the orthogonal projection from $\R^{d+k}$ along a subspace $V\in G(d,d+k)$ by $\proj_V$.

\begin{theorem}\cite[Corollary~2.7]{barkaen}\label{thm:iffLyap}
Let $\Phi=\{\widetilde{F}_i\}_{i=1}^M$ be the IFS of the form \eqref{eq:gentriang} with $1>\lambda_i>\gamma_i>0$ and with SOSC. Then for every $\mu$ self-affine measure
$$
\dim\mu=D(\mu)\text{ if and only if }\dim(\proj_V)_*\mu=\min\left\{k,D(\mu)\right\}\text{ for $\mu_F$-a.e. }V.
$$
\end{theorem}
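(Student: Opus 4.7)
The plan is to deduce this equivalence from a Ledrappier--Young dimension decomposition for $\mu$, made cleanly applicable here because the triangular structure of \eqref{eq:gentriang} produces a H\"older continuous invariant foliation in the Furstenberg direction.

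First I would observe that, since $\gamma_i/\lambda_i<1$, the Furstenberg--Kifer IFS $\{h_i\}_{i=1}^M$ from \eqref{eq:furstsys} is uniformly contracting on $\R^{dk}$, so $\ii\mapsto V(\ii)$ is everywhere defined and H\"older continuous on $\Sigma$. Combined with the H\"older regularity of the coding map $\Pi$, this yields a H\"older continuous $d$-dimensional invariant foliation of a neighbourhood of the attractor by the affine planes $\Pi(\ii)+V(\ii)$, and the measure $\mu_F=V_*\nu$ is the self-similar measure associated with $\{h_i\}$. This flat foliation is the key geometric input that is unavailable in a general non-conformal situation.

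Next I would invoke the Ledrappier--Young scheme with respect to this foliation. Under SOSC the self-affine measure $\mu$ is exact dimensional and, for $\mu_F$-a.e.\ $V$, one has the dimension conservation identity
\[
  \dim\mu \;=\; \dim(\proj_V)_*\mu \;+\; \dim_V^{\mathrm{fib}}\mu,
\]
where $\dim_V^{\mathrm{fib}}\mu$ is the a.s.\ Hausdorff dimension of the conditional measure of $\mu$ on a $d$-dimensional leaf parallel to $V$. The subadditive thermodynamic formalism for the singular value function provides the a priori upper bound
\[
  \dim_V^{\mathrm{fib}}\mu \;\leq\; \frac{h_\mu-\chi_2\,\dim(\proj_V)_*\mu}{\chi_1},
\]
where $\chi_1,\chi_2$ are the fast (base) and slow (fibre) Lyapunov exponents of the inverse dynamics, with $\chi_1>\chi_2>0$ thanks to $\gamma_i<\lambda_i$. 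Writing $t=\dim(\proj_V)_*\mu\in[0,k]$ and combining these two displays yields $\dim\mu\le\Phi(t):=t+(h_\mu-\chi_2 t)/\chi_1$, which is strictly increasing in $t$. A direct unfolding of the definition \eqref{eq:lyapdim} shows $\Phi\bigl(\min\{k,D(\mu)\}\bigr)=D(\mu)$.

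The equivalence now follows from monotonicity of $\Phi$, Falconer's upper bound $\dim\mu\le D(\mu)$, and the trivial bound $\dim(\proj_V)_*\mu\le\min\{k,\dim\mu\}$. If $\dim(\proj_V)_*\mu=\min\{k,D(\mu)\}$ $\mu_F$-a.e., then $\Phi$ attains its maximum $D(\mu)$ and the conservation identity forces $\dim\mu=D(\mu)$. Conversely, if $\dim\mu=D(\mu)$, the chain
\[
  D(\mu)=\dim\mu \;\le\; \Phi\bigl(\dim(\proj_V)_*\mu\bigr) \;\le\; \Phi\bigl(\min\{k,D(\mu)\}\bigr)=D(\mu)
\]
together with strict monotonicity pins down $\dim(\proj_V)_*\mu=\min\{k,D(\mu)\}$ on a full $\mu_F$-measure set. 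The principal obstacle in this programme is the Ledrappier--Young step itself: proving both the dimension conservation identity and the matching entropy/Lyapunov fibre bound for a non-conformal self-affine measure. This is the technical heart of the B\'ar\'any--K\"aenm\"aki theory, and it relies essentially on SOSC, on exact dimensionality of $\mu$, and on the H\"older invariant foliation produced above by the triangular assumption $\gamma_i<\lambda_i$.
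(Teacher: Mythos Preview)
The paper does not give its own proof of this theorem: it is quoted verbatim from \cite[Corollary~2.7]{barkaen} and used as a black box. So there is no ``paper's proof'' to compare against; what you have written is an outline of the B\'ar\'any--K\"aenm\"aki argument that the paper is citing. On that level your strategy is the right one: the triangular structure makes the Oseledets foliation H\"older and globally defined, and the result then follows from the Ledrappier--Young formula for self-affine measures together with a monotonicity argument in the projected dimension.

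There is, however, a genuine gap in your ``if'' direction. You state the fibre inequality
\[
  \dim_V^{\mathrm{fib}}\mu \;\le\; \frac{h_\mu-\chi_2\,\dim(\proj_V)_*\mu}{\chi_1},
\]
combine it with the conservation identity to get $\dim\mu\le\Phi(t)$, and then claim that $t=\min\{k,D(\mu)\}$ ``forces'' $\dim\mu=D(\mu)$. It does not: from $\dim\mu\le\Phi(t)=D(\mu)$ you only recover Falconer's upper bound. When $D(\mu)\le k$ you can rescue the argument trivially (since then $t=D(\mu)$ and $\dim\mu\ge t$), but when $D(\mu)>k$ you need the \emph{lower} bound $\dim_V^{\mathrm{fib}}\mu\ge(h_\mu-k\chi_2)/\chi_1$, which your stated inequality does not provide. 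The actual content of the B\'ar\'any--K\"aenm\"aki theorem is that under SOSC the Ledrappier--Young formula holds as an \emph{equality} $\dim\mu=\Phi\bigl(\dim(\proj_V)_*\mu\bigr)$, not merely an inequality; the SOSC is what rules out entropy drop in the strong-stable direction and upgrades your fibre bound to an equality. Once you state it as an equality, both directions of the ``if and only if'' follow immediately from the strict monotonicity of $\Phi$, and your final paragraph correctly identifies that establishing this equality is the real work.
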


In the literature, this condition has been confirmed in the following two situations. We note that in Theorem~\ref{thm:iffLyap} we do not require that $B_i\neq0$ for some $i=1,\ldots,M$.

\begin{theorem}\cite[Proposition~6.6]{BHR}\label{thm:planar}
  Let $\Phi=\{\widetilde{F}_i\}_{i=1}^M$ be the IFS of the form \eqref{eq:gentriang} with $1>\lambda_i>\gamma_i>0$ and with SOSC. Assume that $d=k=1$. If the maps $h_i$ do not have a common fixed point then
  $$
  \dim\mu=D(\mu).
  $$
\end{theorem}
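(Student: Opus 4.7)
The plan is to verify the hypothesis of Theorem~\ref{thm:iffLyap}. With $d=k=1$, this reduces to showing that for $\mu_F$-almost every $V\in\RP$ one has $\dim(\proj_V)_*\mu=\min\{1,D(\mu)\}$, whereupon Theorem~\ref{thm:iffLyap} yields $\dim\mu=D(\mu)$.

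First I would make the Furstenberg-Kifer IFS explicit in the planar case. Parametrising $\RP$ minus the vertical axis by the slope $v$ of the line, the computation preceding Theorem~\ref{thm:iffLyap} shows that $h_i$ takes the self-similar form
$$
h_i(v)=\frac{\gamma_i}{\lambda_i}\,\epsilon_i\,v+t_i,\qquad \epsilon_i=U_iO_i\in\{\pm 1\},\quad t_i=-\tfrac{O_iB_i}{\lambda_i},
$$
which is strictly contracting on $\R$ thanks to the dominated splitting $\lambda_i>\gamma_i$. The Furstenberg-Kifer measure $\mu_F$ is the self-similar measure generated by $\{h_i\}$ with Bernoulli weights $\pv$. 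The hypothesis that the $h_i$ do not share a common fixed point is precisely what prevents $\mu_F$ from being a point mass; its support is then a non-trivial compact self-similar subset of $\R$, and in particular $\mu_F$ has non-trivial scaling structure.

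Next I would analyse the family $V\mapsto(\proj_V)_*\mu$ via the skew-product form of $\widetilde F_i$. Writing $\proj_V(x,z)=z-vx$, iterating the self-similarity $\mu=\sum_i p_i(\widetilde F_i)_*\mu$ expresses $(\proj_V)_*\mu$ as a weighted superposition of translates of the measures $(\proj_{V_n(\ii)})_*\mu$, where $V_n(\ii)=(D\widetilde F_{\ii|_n})^{-1}V$ evolves along the inverse cocycle and converges to $V(\sigma^n\ii)$ thanks to the Oseledets invariance \eqref{eq:inv}. This identifies $(\proj_V)_*\mu$ with the distribution of a random series whose contractions are products of $\gamma_{i_k}$ and $\lambda_{i_k}$ and whose translations are drawn from $\mu$, placing the analysis squarely in the framework where Hochman-type entropy machinery applies.

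The main obstacle, and the crux of the BHR argument, is to establish that for $\mu_F$-almost every $V$ the dyadic entropy of $(\proj_V)_*\mu$ at scale $2^{-n}$ grows like $n\min\{1,D(\mu)\}\log 2$. I would obtain this from Hochman's inverse theorem for entropy of convolutions: if the projection had strictly smaller dimension, then convolving $(\proj_V)_*\mu$ with a microscopic slice of $\mu_F$---which is non-trivial by the preceding step---would produce an entropy increment incompatible with the scaling of $\mu$ enforced by the Ledrappier-Young formula. The SOSC hypothesis is what permits passing freely between symbolic and metric coverings without overlap penalties and validates the Ledrappier-Young decomposition in this geometric setting. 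Once the projection dimensions have been pinned down, Theorem~\ref{thm:iffLyap} concludes that $\dim\mu=D(\mu)$.
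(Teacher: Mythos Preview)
The paper does not prove this statement: Theorem~\ref{thm:planar} is quoted verbatim from \cite[Proposition~6.6]{BHR} and used as a black box in the proof of Theorem~\ref{thm:mainmark}. There is therefore no ``paper's own proof'' to compare against.

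As to your sketch itself: the overall architecture---reduce to the projection criterion of Theorem~\ref{thm:iffLyap}, identify $\mu_F$ with a self-similar measure on $\R$ via the Furstenberg--Kifer IFS $\{h_i\}$, observe that the no-common-fixed-point hypothesis makes $\mu_F$ non-degenerate, and then invoke entropy methods to force $\dim(\proj_V)_*\mu=\min\{1,D(\mu)\}$---is indeed the shape of the argument in \cite{BHR}. However, your final paragraph is where essentially all of the work lies, and it is a sketch rather than a proof. The passage from ``$\mu_F$ is not a point mass'' to ``convolving with a microscopic slice of $\mu_F$ gives an entropy increment contradicting Ledrappier--Young'' compresses a substantial technical development: one needs the full machinery of component measures, the entropy-porosity dichotomy, and a careful analysis of how the projected measures decompose along the cocycle. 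In particular, the non-atomicity of $\mu_F$ by itself does not immediately give positive-dimensional slices; one needs that $\mu_F$ has positive dimension (or at least enough entropy at all small scales), which in \cite{BHR} is established separately. So while nothing you wrote is wrong, the proposal should be read as an outline of the BHR strategy rather than a self-contained proof.
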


In higher dimensions, we have to add an extra condition on the Furstenberg-Kifer measure.

\begin{theorem}\label{thm:rap}\cite[Section~1.2]{rapaport2015self}
  Let $\Phi=\{\widetilde{F}_i\}_{i=1}^M$ be the IFS of the form \eqref{eq:gentriang} with $1>\lambda_i>\gamma_i>0$ and with SOSC. If
  $$
  D(\mu)+\dim\mu_F>(d+1)k
  $$
  then
  $$
  \dim\mu=D(\mu).
  $$
\end{theorem}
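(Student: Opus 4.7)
The plan is to reduce the problem to the criterion in Theorem~\ref{thm:iffLyap}, which states that $\dim\mu = D(\mu)$ if and only if $\dim(\proj_V)_*\mu = \min\{k, D(\mu)\}$ for $\mu_F$-a.e.\ $V \in G(d,d+k)$. The upper bound $\dim(\proj_V)_*\mu \le \min\{k, \dim\mu\} \le \min\{k, D(\mu)\}$ is automatic (since $\proj_V$ maps into a $k$-dimensional space), so the task reduces to proving the matching lower bound for $\mu_F$-typical $V$.

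The natural tool for the lower bound is a Marstrand--Kaufman--Mattila type projection theorem parameterized by measures on the Grassmannian: for any Borel probability measure $\theta$ on $G(d,d+k)$ and any Borel probability measure $\nu$ on $\R^{d+k}$,
\[
\dim(\proj_V)_*\nu \ge \min\bigl\{k,\, \dim\nu + \dim\theta - dk\bigr\} \qquad \text{for $\theta$-a.e.\ } V,
\]
since $\dim G(d,d+k) = dk$. In particular, whenever $\dim\nu + \dim\theta > (d+1)k$ one has $\dim(\proj_V)_*\nu = k$ for $\theta$-a.e.\ $V$. Applied with $\theta = \mu_F$ and $\nu = \mu$, this would close the proof \emph{provided} $\dim\mu + \dim\mu_F > (d+1)k$.

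The main obstacle is that the hypothesis is $D(\mu) + \dim\mu_F > (d+1)k$, which is weaker than $\dim\mu + \dim\mu_F > (d+1)k$ when $\dim\mu < D(\mu)$ --- precisely the case where there is work to do. To close this gap, I would use the Ledrappier--Young formula for $\mu$ in the skew-product coordinates, disintegrating $\mu = \int \mu^x \, d(\proj_1)_*\mu(x)$ into the base marginal and conditional fiber measures $\mu^x$ supported on $\{x\} \times \R^k$. The skew-product form of $\widetilde{F}_i$ makes each $\mu^x$ an infinitely iterated convolution of orthogonal contractions whose dimension can be computed from $h_\mu$ and $\chi_2(\mu)$, and summing these contributions to the base dimension recovers $D(\mu)$. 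A Fubini-type slicing argument then relates $\dim(\proj_V)_*\mu$ to the projected dimensions of the slices $(\proj_V)_*\mu^x$, to which the projection theorem above can be applied with the dimensional budget dictated by the Ledrappier--Young decomposition.

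The hardest step will be carrying out this slicing in a way that both (a) is compatible with the action of $\proj_V$ for $\mu_F$-a.e.\ $V$ (which requires that $\mu_F$ is genuinely transverse to the vertical direction, encoded in the definition of the Furstenberg--Kifer IFS $\{h_i\}$), and (b) yields a lower bound of $\min\{k, D(\mu)\}$ rather than only $\min\{k, \dim\mu\}$ after integrating over the base. Once this slicing argument is executed and the projection identity $\dim(\proj_V)_*\mu = \min\{k, D(\mu)\}$ is verified for $\mu_F$-a.e.\ $V$, Theorem~\ref{thm:iffLyap} delivers $\dim\mu = D(\mu)$.
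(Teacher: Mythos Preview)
This theorem is not proved in the paper at all: it is quoted from Rapaport \cite[Section~1.2]{rapaport2015self} and used as a black box (see the corollary immediately following it and the proof of Theorem~\ref{thm:mainmark}). So there is no in-paper argument to compare your proposal against.

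As for the proposal itself, the overall architecture --- reduce to the projection criterion of Theorem~\ref{thm:iffLyap} and then feed in a projection theorem for measures on the Grassmannian --- is reasonable, and you correctly isolate the real difficulty: the hypothesis controls $D(\mu)+\dim\mu_F$, not $\dim\mu+\dim\mu_F$, so a naive application of an exceptional-set projection estimate to $\mu$ is circular. Two points are worth flagging. First, the projection inequality you invoke,
\[
\dim(\proj_V)_*\nu \ \ge\ \min\{k,\ \dim\nu+\dim\theta-dk\}\quad\text{for $\theta$-a.e.\ }V,
\]
is not a statement one can quote off the shelf for arbitrary $\theta$ on $G(d,d+k)$; it requires Kaufman--Falconer type exceptional-set bounds of the correct shape in this Grassmannian setting, and those bounds must be checked to match the numerology $(d+1)k$ you need. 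Second, and more seriously, the Ledrappier--Young/slicing step you describe is exactly where Rapaport's work lies, and your sketch does not explain how to turn dimensional information about the fiber conditionals $\mu^x$ into a lower bound on $\dim(\proj_V)_*\mu$ that recovers $\min\{k,D(\mu)\}$ rather than $\min\{k,\dim\mu\}$; this is the substantive content of \cite{rapaport2015self} and is not a routine Fubini argument. In short, you have identified the right obstacle but not supplied the mechanism that overcomes it.
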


In general, the dimension theory of the Furstenberg-Kifer measure is far from being well understood. For the case of general $SL_2(\R)$ matrices, Hochman and Solomyak \cite{Hochman2017} gave a condition, which allows us to calculate the dimension of the measure. However, in higher dimension, it is unknown whether the Furstenberg-Kifer measure is exact dimensional. In our case, the Furstenberg-Kifer measure can be associated with a self-similar measure, thus, by using the result of Hochman \cite{hochman2015self}, we can compute the dimension of the measure under some conditions.

\begin{theorem}\cite[Corollary~1.6]{hochman2015self}\label{thm:hochman}
Let $\{h_i:\R^{dk}\mapsto\R^{dk}\}_{i=1}^M$ be a IFS of similarities of the form \eqref{eq:furstsys}, and let $\pv=(p_i)_{i=1}^M$ be a probability vector with $p_i>0$ for every $i=1,\ldots,M$. If HESC holds (see Definition~\ref{def:hochman}) and $\mu_F=V_*\pv^\N$ then
$$
\dim\mu_F=\min\left\{dk,\frac{-\sum_{i=1}^Mp_i\log p_i}{-\sum_{i=1}^Mp_i\log(\gamma_i/\lambda_i)}\right\}.
$$
\end{theorem}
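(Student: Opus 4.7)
The plan is to establish the two bounds separately. The upper bound $\dim \mu_F \leq \min\{dk, H(\pv)/\chi(\pv)\}$, where $H(\pv) = -\sum_i p_i\log p_i$ and $\chi(\pv) = -\sum_i p_i\log(\gamma_i/\lambda_i)$, is routine. Iterating the self-similarity relation $\mu_F = \sum_i p_i (h_i)_*\mu_F$ yields for each $n$ a decomposition $\mu_F = \sum_{|\ii|=n} p_\ii (h_\ii)_*\mu_F$ whose components are supported on translates of the attractor compressed by the factor $\gamma_\ii/\lambda_\ii$. Choosing a stopping time which cuts each path at contraction $\asymp r$ and invoking the Shannon-McMillan-Breiman theorem for the Bernoulli measure $\pv^\N$ produces the natural cover, yielding the bound $H(\pv)/\chi(\pv)$; the trivial bound $\dim\mu_F \leq dk$ completes the estimate.

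For the lower bound I would follow Hochman's entropy-increase strategy. Let $\mathcal{D}_q$ denote the dyadic partition of $\R^{dk}$ at scale $2^{-q}$ and set $\underline{\dim}_e\mu_F := \liminf_{q\to\infty}H(\mu_F,\mathcal{D}_q)/(q\log 2)$; since a self-similar measure is exact dimensional by Feng-Hu, this entropy dimension coincides with $\dim\mu_F$, so it suffices to work at dyadic scales. Fix $q$ large; for each word $\ii$ let $n(\ii)$ be the first length at which $\gamma_\ii/\lambda_\ii \lesssim 2^{-q}$. The self-similarity then expresses $\mu_F$ as a convex combination $\sum_\ii p_\ii (h_\ii)_*\mu_F$ whose pieces each live in a set of diameter $\asymp 2^{-q}$. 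The central inequality of \cite{hochman2015self} states that the entropy of this convex combination at scale $2^{-q}$ is nearly equal to $H(p_{\ii})$ plus the average entropy of the components, unless the barycenters $\{h_\ii(\mathbf{0})\}$ exhibit substantial clustering at scale $2^{-q}$.

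The main obstacle, and the heart of Hochman's argument, is to exclude such clustering via HESC. The first clause of the hypothesis, exponential separation $\|h_{\ii|_n}(\mathbf{0})-h_{\jj|_n}(\mathbf{0})\|\gtrsim e^{-Cn}$ for $\ii|_n\neq\jj|_n$, directly prevents distinct barycenters from coalescing in a dyadic cell of side $2^{-q}$ once $n \lesssim q/C$, ruling out concentration in the ambient Euclidean sense. In dimension $dk\geq 2$ this is insufficient on its own, because the drop in entropy can also arise from concentration along a proper affine subspace invariant under the group $\mathcal{S}(\{U_i^T\otimes O_i^T\}_{i=1}^M)$; the second clause of HESC, strong irreducibility of this group, is precisely what excludes this secondary failure mode. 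Iterating the entropy-growth inequality at geometrically spaced scales, one shows that if $\underline{\dim}_e\mu_F$ were strictly less than $\min\{dk,H(\pv)/\chi(\pv)\}$, a definite amount of entropy would accumulate at each scale, producing a contradiction. This yields the matching lower bound and hence the theorem.
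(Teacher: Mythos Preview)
The paper does not prove this statement at all: Theorem~\ref{thm:hochman} is quoted verbatim as \cite[Corollary~1.6]{hochman2015self} and used as a black box, so there is no ``paper's own proof'' to compare against. What you have written is a high-level outline of Hochman's original argument, and at that level it is accurate: the upper bound is the standard covering argument via Shannon--McMillan--Breiman, and the lower bound proceeds by the entropy-growth/inverse-theorem machinery, with exponential separation ruling out super-exponential concentration of the pieces and strong irreducibility ruling out concentration along a proper invariant subspace.

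That said, as a self-contained proof your sketch has real gaps. The phrase ``the central inequality of \cite{hochman2015self}'' is doing all the work, and what it actually stands for --- the inverse theorem for the entropy of convolutions in $\R^{dk}$ together with the multiscale analysis that converts exponential separation into a lower bound on scale-$q$ entropy --- is highly nontrivial and not something one can wave through. In particular, the step ``a definite amount of entropy would accumulate at each scale, producing a contradiction'' hides the entire structure of Hochman's argument: one must show that if $\dim\mu_F<\min\{dk,H(\pv)/\chi(\pv)\}$ then at a positive density of scales the measure has nearly full entropy after rescaling, invoke the inverse theorem to deduce porosity or near-saturation of the component measures, and then use strong irreducibility to rule out the saturated case. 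None of this is visible in your outline. If the intent is merely to cite the result, a one-line reference suffices; if the intent is to reprove it, substantially more detail is required.
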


\begin{corollary}\label{cor:raphoch} Let $\Phi=\{\widetilde{F}_i\}_{i=1}^M$ be the IFS of the form \eqref{eq:gentriang} with $1>\lambda_i>\gamma_i>0$ and with SOSC. Let $s$ be the unique root of the pressure defined in \eqref{eq:pressuredefsymb}. If the IFS $\{h_i:\R^{dk}\mapsto\R^{dk}\}_{i=1}^M$ defined in \eqref{eq:furstsys} satisfies HESC,
$$
s>\frac{(d+2)k}{2}
$$
then
$$
\dim_H\Lambda=s.
$$
\end{corollary}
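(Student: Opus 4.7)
The plan is to combine the upper bound from Lemma~\ref{lem:ubandadditive} with a matching lower bound obtained by exhibiting a self-affine measure of dimension exactly $s$, via Rapaport's criterion (Theorem~\ref{thm:rap}). The upper bound $\dim_H\Lambda\le s$ is immediate from Lemma~\ref{lem:ubandadditive} applied to $B=[0,1]^d$: the underlying symbolic system is the full shift on $M$ symbols, hence a (trivial) Markov system, and $s$ is by definition the root of $P(s,[0,1]^d)$. For the lower bound, I will invoke the variational principle encoded in Lemma~\ref{lem:vari} on this same Markov system to produce an ergodic measure $\mu_s$ with $D(\mu_s)=s$; since the potential $\varphi^s$ is locally constant on the full shift, $\mu_s$ is a Bernoulli measure with some probability vector $\pv=(p_i)_{i=1}^M$. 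Adopting the convention of Section~\ref{sec:tools}, write $h=h(\mu_s)$, $\chi_2=-\sum p_i\log\gamma_i$ and $\chi_1=-\sum p_i\log\lambda_i$, so that $\chi_2>\chi_1>0$ and
$$
D(\mu_s)=\min\left\{\frac{h}{\chi_1},\ k+\frac{h-\chi_1}{\chi_2}\right\}.
$$

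Next, since the Furstenberg--Kifer IFS $\{h_i\}$ satisfies HESC by hypothesis, Theorem~\ref{thm:hochman} applied to $\mu_s$ produces the Furstenberg--Kifer measure $\mu_F$ with
$$
\dim\mu_F = \min\left\{dk,\ \frac{h}{\chi_2-\chi_1}\right\}.
$$
Once I verify Rapaport's inequality $D(\mu_s)+\dim\mu_F>(d+1)k$, Theorem~\ref{thm:rap} forces $\dim\mu_s=D(\mu_s)=s$, whence $\dim_H\Lambda\ge\dim\mu_s=s$ matches the upper bound and closes the argument.

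The remaining work, and the only real computation, is to verify Rapaport's inequality from the hypothesis $s>(d+2)k/2$. Since $d\ge 1$, one has $s>(d+2)k/2\ge 3k/2>k$, so the formula for $D(\mu_s)$ forces $h\ge\chi_1+(s-k)\chi_2$. In the case $\dim\mu_F=dk$, the bound is immediate: $s+dk>(d+2)k/2+dk=(3d+2)k/2>(d+1)k$ for $d\ge 1$. In the other case, setting $t=\chi_1/\chi_2\in(0,1)$ one obtains $\dim\mu_F\ge(s-k+t)/(1-t)$, and the inequality $s+\dim\mu_F>(d+1)k$ rearranges (after multiplying by $1-t>0$) to
$$
2s > (d+2)k + t\bigl(s-(d+1)k-1\bigr).
$$
Because $s\le d+k<(d+1)k+1$ for $d,k\ge 1$, the bracketed term is strictly negative, so the hypothesis $2s>(d+2)k$ more than suffices. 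The main obstacle is less conceptual than notational: one must keep the two conventions for the Lyapunov exponents (expanding versus contracting) aligned when quoting Theorems~\ref{thm:rap} and~\ref{thm:hochman}; once this numerology is in place, Rapaport's condition and hence the corollary follow without further work.
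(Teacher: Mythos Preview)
Your approach is correct and follows the same route as the paper: construct the equilibrium Bernoulli measure with $D(\mu)=s$, bound $\dim\mu_F$ via Theorem~\ref{thm:hochman}, and verify Rapaport's inequality. The paper's verification is cleaner, however. Working with the explicit probability vector $p_i=\lambda_i^k\gamma_i^{s-k}$ one has $h=k\chi_1+(s-k)\chi_2$ exactly, and then
\[
\frac{h}{\chi_2-\chi_1}-(s-k)=\frac{k\chi_1+(s-k)\chi_2-(s-k)(\chi_2-\chi_1)}{\chi_2-\chi_1}=\frac{s\chi_1}{\chi_2-\chi_1}>0,
\]
so $\dim\mu_F\ge s-k$ in one line and $D(\mu)+\dim\mu_F\ge 2s-k>(d+1)k$ is immediate from the hypothesis $2s>(d+2)k$, with no case split and no appeal to the auxiliary bound $s\le d+k$. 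Two minor points on your write-up: your formula $D(\mu_s)=\min\{h/\chi_1,\,k+(h-\chi_1)/\chi_2\}$ inherits the missing factor $k$ from \eqref{eq:lyapdim} (it should be $h-k\chi_1$), which only weakens your entropy bound and so does no harm; and the inequality $s\le d+k$ you invoke deserves a word of justification (it follows from SOSC by the volume comparison $\sum_i|\det D\widetilde F_i|\le 1$).
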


\begin{proof}
  Observe that if $s$ is the unique root of the pressure defined in \eqref{eq:pressuredefsymb},
  $$
  \text{ if $s<k$ then }\sum_{i=1}^M\lambda_i^s=1\text{ else }\sum_{i=1}^M\lambda_i^k\gamma_i^{s-k}=1.
  $$
  Observe that our assumption implies  $s>k$. Let $\nu$ be the Bernoulli measure associated to the prob vector $\pv=(\lambda_i^k\gamma_i^{s-k})_{i=1}^M$ and let $\mu_F=V_*\nu$ and $\mu=\Pi_*\nu$. Thus,
  $$
  D(\mu)=s\text{ and }\frac{-\sum_{i=1}^Mp_i\log p_i}{-\sum_{i=1}^Mp_i\log(\gamma_i/\lambda_i)}\geq s-k.
  $$
  Hence, by Theorem~\ref{thm:hochman}, $\dim\mu_F>s-k$, and therefore $D(\mu)+\dim\mu_F\geq2s-k>(d+1)k$. Thus, by applying Theorem~\ref{thm:rap}, the statement follows.
\end{proof}

\begin{remark}\label{rem:tosurface}
We note that the condition $s>(d+1)k/2$ given in Corollary~\ref{cor:raphoch} holds if $k=1$ and $d\geq2$. For $k\geq 2$, we have $(d+2)k/2\geq d+k\geq s$. On the other hand, in our case $s>d\geq d/2+1$.
\end{remark}

\subsection{Approximating Markov systems with IFSs}\label{sec:approx}
In this section, we approximate the subshifts $Y$ of finite type with full-shifts in the sense of entropy plus weak*-topology. By Remark~\ref{rem:newalpha}, we may assume that $Y$ is of type-$1$, that is, $Y$ is a Markov-shift. Throughout the section, we use the method given by Jordan and Rams \cite{JordanRams11}.

Let $Q$ be the $M\times M$ transition matrix corresponding to the subshift $Y$ of type-$1$. Let us denote the set of allowed words of length $q$ by $\Sigma^{(q)}_Q$. We say that a measure $\mu$ is Markov if there exists an $M\times M$ stochastic matrix $P$ such that if $P_{i,j}\neq0$ then $Q_{i,j}\neq0$ and
$$\mu([i_0,\dots,i_n])=p_{i_0}P_{i_0,i_1}\cdots P_{i_{n-1}i_n},$$
where $(p_i)_{i=1}^M$ is a left-eigenvector of $P$ of eigenvalue $1$. We say that $\mu$ is generalized Markov, if there exist $q\geq1$ and $M^q\times M^q$ stochastic matrix $P$ such that if $P_{\iv_1,\iv_2}\neq0$ then $\iv_1,\iv_2\in\Sigma^{(q)}_Q$ and $(\iv_1)_q=(\iv_2)_1$, moreover,
$$
\mu([i_0,\dots,i_{qn}])=\mu([\iv_0,\dots,\iv_n])=p_{\iv_0}P_{\iv_0,\iv_1}\cdots P_{\iv_{n-1}\iv_n},
$$
where $(p_{\iv})_{\iv\in\Sigma_Q^{(q)}}$ is a left eigenvector of $P$ with eigenvalue $1$. We note that generalised Markov measures are not necessarily $\sigma$-invariant, but they are $\sigma^q$-invariant. By taking $\mu'=\frac{1}{q}\sum_{k=0}^{q-1}\mu\circ\sigma^{-k}$, one can show that $\mu'$ is $\sigma$-invariant.

We say that a symbol $j\in\{1,\dots,M\}$ is recurrent if there exist $n\geq1$  and $(i_1,\dots,i_n)$ that $Q_{j,i_1}Q_{i_1,i_2}\cdots Q_{i_n,j}\neq0$. Denote $\mathcal{R}_Q$ the set of recurrent symbols. Let us define a new alphabet for all $j\in\mathcal{R}_Q$. Namely,
\begin{equation}\label{eq:omega}
\Omega_{j,Q}^{(q)}=\left\{\iv\in\Sigma_Q^{(q)}:i_0=i_q=j\right\}.
\end{equation}
Each element of $\Omega_{j,Q}^{(q)}$ corresponds to a $q$-step loop with source and target $j$ in our Markov system, and we may concatenate such loops. Let us denote the set of such infinite words by $L_j^{(q)}$. In that way we obtain a $\sigma^q$-invariant subset, which we can identify with $\Omega:=\left(\Omega_{j,Q}^{(q)}\right)^{\N}$, that is $(\Omega,\widetilde{\sigma})$ is conjugated to $(L_j^{(q)},\sigma^q)$. Let us denote the conjugation by $\varphi_q:L_j^{(q)}\mapsto\Omega$. We can define Bernoulli measures on it by attaching to each $\omega\in\Omega_{j,Q}^{(q)}$ a probabilistic weight $p_\omega$. Denote this Bernoulli measure by $\widetilde{\mu}$. The measure $(\varphi_q)_*\widetilde{\mu}$ is only $\sigma^q$-invariant and ergodic, to make it $\sigma$-invariant and ergodic, we need to consider
\begin{equation}\label{eq:qstepBernoulli}
\mu:=\frac{1}{q}\sum_{k=0}^{q-1}(\varphi_q)_*\widetilde{\mu}\circ\sigma^{-k}.
\end{equation}
We call the measure $\mu$ as $q$-step Bernoulli measure for the recurrent element $j$. Let us denote the set of $q$-step Bernoulli measures for the recurrent symbol $j$ by $\mathcal{B}_{j,q}$. Moreover, let
$$
\mathcal{B}_Q=\bigcup_{j\in\mathcal{R}_Q}\bigcup_{q=1}^{\infty}\mathcal{B}_{j,q}
$$

Now, we state a modified version of Bernoulli approximation, proven in \cite[Lemma~6]{JordanRams11}, for Markov systems. We say that a sequence $\mu_n\in\mathcal{P}_{\rm{inv}}(Q)$ converges to $\mu\in\mathcal{P}_{\rm{inv}}(Q)$ in \textit{the entropy plus weak*-topology} if $\mu_n$ converges to $\mu$ in weak*-topology and $h_{\mu_n}\to h_{\mu}$ as $n\to\infty$. We note that the entropy plus weak*-topology is indeed a topology, since $Y$ is a separable metric space and thus, the weak*-topology is metrisable with metric $d_{w*}$, and $d(\nu,\mu)=d_{w*}(\nu,\mu)+|h_\nu-h_\mu|$ is a metric generating the entropy plus weak*-topology.

\begin{lemma}\label{lem:qstepdense}
        For any primitive matrix $Q$, $\mathcal{B}_Q$ is dense in $\mathcal{P}_{\rm{inv}}(Q)$ in the entropy plus weak*-topology. If $Q$ is not primitive, $\mathcal{B}_Q$ is dense in $\mathcal{P}_{\rm{erg}}(Q)$ and its convex hull is dense in $\mathcal{P}_{\rm{inv}}(Q)$.
\end{lemma}

\begin{proof}
It is enough to prove the first statement, the second is an easy corollary.

        Thus, assume that $Q$ is primitive and let $k\geq1$ be such that all elements of $Q^k$ are positive. Let $\mu$ be an arbitrary invariant measure. Choose $q>2k$. Let $i\in\mathcal{R}_Q$ be arbitrary but fixed. Let us define a $q$-step Bernoulli measure $\nu_q$ for $i$ as follows: for any $\iv\in\Omega_{j,Q}^{(q)}$ we decompose $\iv=\iv_1\jv\iv_2$ for which $|\iv_1|=|\iv_2|=k$.

It is easy to see by the positivity of $Q^k$ that there exist $\iv_1,\iv_2$ with length $k$ such that for every $\jv\in\Sigma_Q^{(q-2k)}$, $\iv_1\jv\iv_2\in\Omega_{j,Q}^{(q)}$. Let
        $$
        \widetilde{\nu}_q(\iv):=\begin{cases}
\mu(\jv) & \iv=\iv_1\jv\iv_2 \\
0 & \text{otherwise.}
\end{cases}
        $$
        Let $\nu_q$ be as defined in \eqref{eq:qstepBernoulli}. First, observe that
        \begin{equation}\label{eq:lem11}
        h_{\nu_q}=\frac{1}{q}h_{(\varphi_q)_*\widetilde{\nu}_q}=\frac{1}{q}h_{\widetilde{\nu}_q}=\frac{-1}{q}\sum_{\jv\in\Sigma_Q^{(q-2k)}}\mu([\jv])\log\mu([\jv]).
        \end{equation}
That is, the entropies of measures $\nu_q$ converge to $h(\mu)$ as $q\to\infty$.

Now we show that $\nu_q\to\mu$ in weak*-topology. Let $\eta:Y\mapsto\R$ be a H\"older-continuous test function. That is, there exists a constant $0<\kappa<1$ such that for every $\ii,\jj\in Y$
$$
|\eta(\ii)-\eta(\jj)|\leq \kappa^{|\ii\wedge\jj|}.
$$
We have
        \begin{eqnarray*}
        &&\left|\int\eta(\ii)\mathrm{d}\nu_q(\ii)-\int\eta(\ii)\mathrm{d}\mu(\ii)\right|\\
        &&\leq\frac{1}{q}\sum_{n=0}^{q-1}\left|\int_{\Omega}\eta(\sigma^n\varphi_q(\ii))\mathrm{d}\widetilde{\nu}_q(\ii)-\int\eta(\sigma^n\ii)\mathrm{d}\mu(\ii)\right|\\
        &&\leq\frac{2k}{q}\sup_{\ii\in Y}|\eta(\ii)|+\frac{1}{q}\sum_{n=k}^{q-k}\left|\int_{\Omega}\eta(\sigma^n\varphi_q(\ii))\mathrm{d}\widetilde{\nu}_q(\ii)-\int\eta(\sigma^n\ii)\mathrm{d}\mu(\ii)\right|\\
&&\leq\frac{2k}{q}\max_{\ii\in Y}|\eta(\ii)|+\frac{2}{q}\sum_{n=k}^{q-k}\kappa^{q-k-n}+\\
        &&\frac{1}{q}\sum_{n=k}^{q-k}\left|\sum_{\jv\in\Sigma^{(q-2k)}_Q}\eta(\sigma^{n-k}\jv\jj)\mu([\jv])-\sum_{\jv\in\Sigma^{(q-2k)}_Q}\eta(\sigma^{n-k}\jv\jj)\mu([\jv])\right|\\
        &&=\frac{2k}{q}\sup_{\ii\in Y}|\eta(\ii)|+\frac{2}{q}\sum_{n=k}^{q-k}\kappa^{q-k-n}.
        \end{eqnarray*}
Thus, $\int\eta\mathrm{d}\nu_q\to\int\eta\mathrm{d}\mu$ as $q\to\infty$. Since this holds for every H\"older-continuous test function, $\nu_q\to\mu$ in weak*-topology.
                \end{proof}

\section{Upper bound for the general case with one dimensional base }\label{sec3}

In this section, we give a more sophisticated upper bound for the Hausdorff dimension of the repeller $\Lambda$ of the system $F$, defined in \eqref{eq:F}, in the case when $d=1$. Let us recall some definitions.

Let $\II=\{I_i\}_{i=1}^M$ be a finite partition of the unit interval $[0,1]$ into proper intervals. Moreover, for every $i=1,\dots,M$ let $f_i$ be a uniformly expanding similitude such that $f_i\colon I_i\mapsto [0,1]$, and we consider the uniformly, piecewise expanding dynamical system $f\colon [0,1]\mapsto [0,1]$, where
\begin{equation}\label{eq:base2}
f(x)=f_i(x)\text{ if }x\in I_i.
\end{equation}
We denote the $n$th refinement of the partition $\II$ w.r.t. $f$ by $\II_n$.

We define $F\colon [0,1]\times\R\mapsto [0,1]\times\R$ as
\begin{equation}\label{eq:defskew}
F(x,z)=(f_i(x),g_i(x,z))\text{ if }x\in I_i,
\end{equation}
where $g_i\colon [0,1]\times\R\mapsto\R$ is an affine mapping such that for every $x\in [0,1]$, the function $g_i(x,.)\colon\R\mapsto\R$ is a similitude and
$$
|f_i'(x)|>\|\partial_2g_i(x,z)\|>\lambda>1\text{ for every }(x,z)\in [0,1]\times\R\text{ and }i=1,\ldots,M.
$$
Denote the local inverses of $f$ and $F$ by $\widetilde{f}_i$ and $\widetilde{F}_i$ as in Section~\ref{subsec:subaddpres}. We may assume without loss of generality, that there exists a closed and bounded interval $J\subset\R$ such that $\widetilde{F}_i([0,1]\times J)\subseteq [0,1]\times J$.

Let $s_0$ be the unique root of the pressure $P(f,\varphi^s)=0$, where $\varphi^s$ is defined in \eqref{eq:potential} and $P$ is defined in \eqref{eq:wholepres}. Let $s>s_0$ and let $\mu$ be a $\varphi^s$-conformal measure on $[0,1]$, that is,
$$
 \mu(f^n(I))=\int_I e^{nP-S_n\varphi^s}d\mu\text{ for every }I\in\bigcup_{n=0}^\infty \II_n.
 $$
 By Theorem~\ref{thm:existconf}, there exists such measure $\mu$. For $\rho>0$, let
 $$
 G_\rho(n)=\left\{I\in\II_n:\mu(f^n(I))>\rho\right\}.
 $$
 Moreover, let
 $$
 M_\rho=\bigcap_{N=1}^{\infty}\bigcup_{n=N}^{\infty}\bigcup_{I\in G_\rho(n)}I.
 $$

 \begin{lemma}
 $$
 \lim_{\rho\to0}\dim_H\Lambda\setminus(M_{\rho}\times\R)\leq1.
 $$
 \end{lemma}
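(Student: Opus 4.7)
The plan is to combine Proposition~\ref{prop:approxgood} with the classical product inequality for Hausdorff dimension; no delicate covering argument is needed once the two inputs are lined up correctly.

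First I would unwind the definitions of $M_\rho$ and of $N_\rho(\cdot,\mu)$ and observe that they describe the same set. By construction, $x\notin M_\rho$ is equivalent to the existence of some $N$ such that for every $n\ge N$ the cylinder $\II_n(x)$ is \emph{not} in $G_\rho(n)$, i.e.\ $\mu(f^n(\II_n(x)))\le \rho$. In symbols, $\Delta_\rho(x)$ is finite, which is exactly the condition defining $N_\rho(\mathrm{supp}(\mu),\mu)$. Hence
$$
[0,1]\setminus M_\rho \;\subseteq\; N_\rho(\mathrm{supp}(\mu),\mu)\;\cup\;\mathfrak{S}_\infty,
$$
and the extra set $\mathfrak{S}_\infty=\bigcup_{n\ge 0} f^{-n}(\mathfrak{S})$ is countable, hence of zero Hausdorff dimension and harmless.

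Second I would use that the whole repeller is contained in the compact product set $[0,1]\times J$, which is a forward invariant trapping region for the local inverses of $F$ as in \eqref{eq:U}. Every point of $\Lambda\setminus(M_\rho\times\R)$ has its first coordinate in $[0,1]\setminus M_\rho$, so
$$
\Lambda\setminus(M_\rho\times\R)\;\subseteq\;\bigl([0,1]\setminus M_\rho\bigr)\times J.
$$
Applying the classical product bound $\dim_H(E\times F)\le \dim_H E+\dim_B F$ (see e.g.\ Falconer's book) with $F=J$, so that $\dim_B F=1$, yields
$$
\dim_H\bigl(\Lambda\setminus(M_\rho\times\R)\bigr)\;\le\;\dim_H\bigl([0,1]\setminus M_\rho\bigr)+1\;\le\;\dim_H N_\rho(\mathrm{supp}(\mu),\mu)+1.
$$

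Finally, I would invoke Proposition~\ref{prop:approxgood}: the $\varphi^s$-conformal measure $\mu$ produced by Theorem~\ref{thm:existconf} is supported on a compact invariant, topologically transitive, uncountable set, which is exactly the hypothesis needed, so $\dim_H N_\rho(\mathrm{supp}(\mu),\mu)\to 0$ as $\rho\to 0^+$. Passing to the limit in the previous display gives $\lim_{\rho\to 0}\dim_H(\Lambda\setminus(M_\rho\times\R))\le 1$, as required.

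There is no substantive obstacle in this argument; the only point requiring verification is that Proposition~\ref{prop:approxgood} is genuinely applicable to the conformal measure $\mu$ at hand (i.e.\ the topological transitivity and compact invariance of $\mathrm{supp}(\mu)$), and that the countable singularity set $\mathfrak{S}_\infty$ may indeed be absorbed without affecting the Hausdorff dimension bound.
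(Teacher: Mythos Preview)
Your argument is correct and is precisely the approach the paper takes: the paper's proof is the single line ``Since $\Lambda\setminus(M_{\rho}\times\R)\subseteq ([0,1]\setminus M_{\rho})\times\R$, the statement follows by Proposition~\ref{prop:approxgood},'' and you have simply made explicit the two steps it leaves to the reader, namely the identification $[0,1]\setminus M_\rho = N_\rho(\cdot,\mu)$ (up to a countable set) and the product bound $\dim_H(E\times J)\le \dim_H E+1$.
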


\begin{proof}
Since $\Lambda\setminus(M_{\rho}\times\R)\subseteq ([0,1]\setminus M_{\rho})\times\R$, the statement follows by Proposition~\ref{prop:approxgood}.
\end{proof}

\begin{lemma}\label{lem:ubnonmarkov}
$$
\dim_H\Lambda\leq\max\{1,s_0\},
$$
where $s_0$ is the unique root of the pressure defined in \eqref{eq:wholepres}.
\end{lemma}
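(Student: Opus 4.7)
The strategy is to prove, for every $s > \max\{1, s_0\}$, that $\mathcal{H}^s(\Lambda) = 0$; letting $s \to \max\{1, s_0\}$ then yields the lemma. I fix such $s$ and a small $\rho > 0$, and split $\Lambda = (\Lambda \cap (M_\rho \times \R)) \cup (\Lambda \setminus (M_\rho \times \R))$. The preceding lemma already handles the second piece: its Hausdorff dimension is at most $1 < s$ for all sufficiently small $\rho$. Hence the work reduces to showing $\mathcal{H}^s(\Lambda \cap (M_\rho \times \R)) = 0$.

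The plan is to construct, at every scale, an efficient cover of $\Lambda \cap (M_\rho \times \R)$ by good cylinder sets. For each $I \in G_\rho(n)$ let $\ii_I \in \{1,\dots,M\}^n$ be the unique word with $\widetilde{f}_{\ii_I}([0,1]) = I$. By the very definition of $M_\rho$, for every $N \geq 1$ the family $\{\widetilde{F}_{\ii_I}(U) : n \geq N,\ I \in G_\rho(n)\}$ covers $\Lambda \cap (M_\rho \times \R)$ with elements of diameter at most $(\min_i \lambda_i)^{-N}$. Each such cylinder is an axis-parallel rectangle of side lengths $\gamma_{\ii_I}^{-1} < \lambda_{\ii_I}^{-1}$; I subdivide it into squares of side $\gamma_{\ii_I}^{-1}$, which contributes a quantity comparable to $\lambda_{\ii_I}^{-1}\, \gamma_{\ii_I}^{-(s-1)} = e^{S_n\varphi^s(\pi(\ii_I))}$ to the $s$-covering sum (using $s > 1 = k$ and the definition of $\varphi^s$).

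The key step is to control the resulting sum via the $\varphi^s$-conformality of $\mu$. Because $f_i$ and $g_i$ are linear, $S_n \varphi^s$ is constant on each $I \in \II_n$, so the conformality relation reads $\mu(I) = \mu(f^n(I))\,e^{-nP + S_n \varphi^s(\pi(\ii_I))}$ with $P = P(f|_{\mathrm{supp}(\mu)}, \varphi^s)$. For $I \in G_\rho(n)$ this gives $e^{S_n \varphi^s(\pi(\ii_I))} \leq \rho^{-1} e^{nP}\, \mu(I)$, and summing over the pairwise disjoint $I$'s yields $\sum_{I \in G_\rho(n)} e^{S_n \varphi^s(\pi(\ii_I))} \leq \rho^{-1} e^{nP}$. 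Thus the total $s$-covering sum is bounded by a constant multiple of $\rho^{-1} \sum_{n \geq N} e^{nP} = C \rho^{-1} e^{NP}/(1-e^P)$, which tends to $0$ as $N \to \infty$ whenever $P < 0$.

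The main obstacle is verifying that $P < 0$. By construction $P \leq \phof(s)$, and since $\phof$ is continuous, non-increasing, and $s_0$ is by assumption its unique zero, one has $\phof(s) < 0$ for every $s > s_0$. A small bookkeeping point is that Theorem~\ref{thm:existconf} produces the conformal measure only on a topologically transitive compact invariant set; if $[0,1]$ itself is not topologically transitive under $f$, I would decompose it into its maximal transitive pieces (gluing overlapping transitive Markov subsets via Proposition~\ref{prop:joinmarkovs}), apply the estimate above on each piece, and absorb the residual set into the contribution of the preceding lemma since it only adds at most $1$ to the dimension.
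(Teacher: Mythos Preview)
Your argument is correct and follows essentially the same route as the paper: split $\Lambda$ using $M_\rho$, invoke the preceding lemma for the complement, then on $M_\rho\times\R$ cover by the cylinders $\widetilde{F}_{\ii_I}(U)$ with $I\in G_\rho(n)$, bound each contribution by $e^{S_n\varphi^s}$, and use $\varphi^s$-conformality of $\mu$ together with $\mu(f^n(I))>\rho$ to get $\sum_{I\in G_\rho(n)}e^{S_n\varphi^s}\le\rho^{-1}e^{nP}$ with $P=\phof(s)<0$. Two small remarks: in the triangular case the cylinders are parallelograms rather than axis-parallel rectangles (handled up to a constant as in Lemma~\ref{lem:ubandadditive}), and the paper simply takes $\mu$ conformal over $[0,1]$ with $P=\phof(s)$ directly, so the transitivity caveat you raise is an issue the paper itself glosses over (it is explicitly assumed in Theorem~\ref{thm:maintriang}).
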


\begin{proof}
For every $\rho>0$ we have $$\dim_H\Lambda\leq\max\{\dim_H\Lambda\setminus(M_{\rho}\times\R),\dim_H\Lambda\cap(M_{\rho}\times\R)\}.$$ Thus, it is enough to show that $$\lim_{\rho\to\infty}\dim_H\Lambda\cap(M_{\rho}\times\R)\leq s_0.$$

Observe that the Birkhoff sum $S_n\varphi^{s}$ is constant over the intervals in $\II_n$. So with a slight abuse of notation, we write $S_n\varphi^{s}:\II_n\to\R$ for every $s>0$.

Let $s>s_0$ and $\mu$ be the $\varphi^s$-conformal measure. We note that in this case, $P=P(f,\varphi^s)<0$. Since $\mu$ is non-atomic and compactly supported, we get that there exists $\kappa=\kappa(\rho)$ such that $|f^n(I)|>\kappa$ for every $n\ge1$ and $I\in G_\rho(n)$. Hence, by the piecewise linearity of $f$
$$
\kappa e^{-S_n\log|f'|}\leq |I|\leq e^{-S_n\log|f'|}.
$$
Let $\ii\in X_n$ the word which corresponds to $f^n(I)$. Since $|f_i'|>|\partial_2g_i|$, then $\widetilde{F}_{\ii}([0,1]\times J)$ can be covered by $(|J|+1)\cdot e^{-S_n\log|\partial_2g|+S_n\log|f'|}$ many balls with radius $e^{-S_n\log|f'|}$. Hence, the impact of $\widetilde{F}_{\ii}([0,1]\times J)$ in the $s$-dimensional Hausdorff measure is at most $e^{S_n\varphi^{s}(I)}$. Thus,
$$
\mathcal{H}^{s}(M_\rho\times J\cap\Lambda)\leq C\lim_{N\to\infty}\sum_{n=N}^{\infty}\sum_{I\in G_\rho(n)}e^{S_n\varphi^{s}(I)}.
$$

But by using the $\varphi^{s}$ conformality of the measure $\mu$,
$$
\mu(f^n(I))=e^{nP-S_n\varphi^{s}(I)}\mu(I)
$$
and hence,
\begin{eqnarray*}
\mathcal{H}^{s}(M_\rho\times J\cap\Lambda)&\leq& C\lim_{N\to\infty}\sum_{n=N}^{\infty}\sum_{I\in G_\rho(n)}\frac{e^{nP}\mu(I)}{\mu(f^n(I))}\\
&\leq& C\lim_{N\to\infty}\sum_{n=N}^{\infty}\sum_{I\in G_\rho(n)}\frac{e^{nP}\mu(I)}{\rho}\\
&\leq& C\lim_{N\to\infty}\sum_{n=N}^{\infty}\frac{e^{nP}}{\rho}=0 .
\end{eqnarray*}

Since $s>s_0$ was arbitrary, the statement follows.
\end{proof}

\begin{lemma}\label{fact}
Let $f\colon [0,1]\mapsto [0,1]$ be a uniformly hyperbolic, piecewise $C^{1+\alpha}$, piecewise monotone interval map with a finite set $\II$ of monotonicity intervals. Moreover, suppose that for each $I\in\II$, there exists an open interval $\overline{I}\subseteq J$ such that $f|_I$ can be extended to a $C^{1+\alpha}$ map to $J$.  Then there exists a constant $K>1$ such that for every $n\geq1$, every $I\in\II_n$ and for every $x,y\in I$,
$$
K^{-1}<\frac{|(f^{n})'(x)|}{|(f^{n})'(y)|}<K.
$$
\end{lemma}

Indeed, the branch of $f^{-n}|_{f^{n}(I)}$ is a composition of contracting uniformly $C^{1+\alpha}$ maps. Hence, by \cite[Proposition~20.1(1)]{Pesin97}, it has distortion uniformly bounded by a constant depending only on $\alpha$, the H\"older-constant and the uniform contraction ratio.

\begin{lemma}\label{lem:bounds0}
Let $f\colon [0,1]\mapsto [0,1]$ be a uniformly hyperbolic, piecewise $C^{1+\alpha}$, piecewise monotone interval map with a finite set $\II$ of monotonicity intervals. Moreover, suppose that for each $I\in\II$, there exists an open interval $\overline{I}\subseteq J$ such that $f|_I$ can be extended to a $C^{1+\alpha}$ map to $J$. Let $\tilde{s}_0$ be the unique root of the pressure defined in \eqref{eq:wholepres} with respect to the potential $-s\log|f'|$. Then $1=\tilde{s}_0$.
\end{lemma}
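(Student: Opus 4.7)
The strategy is to prove the two inequalities $\tilde{s}_0 \leq 1$ and $\tilde{s}_0 \geq 1$ separately, exploiting that uniform expansion makes $s \mapsto P(f|_{[0,1]},-s\log|f'|)$ strictly decreasing, so its unique root is well defined.

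For the upper bound, I would fix any Markov subset $B \in \MM([0,1])$ with Markov partition $\DD$, and read off the pressure from the symbolic form \eqref{eq:markovpress}. With $\varphi = -\log|f'|$ we have $S_n\varphi(x) = -\log|(f^n)'(x)|$, so it suffices to estimate $\sum_{J \in \DD_n} \sup_{x \in J}|(f^n)'(x)|^{-1}$. For each cylinder $J \in \DD_n$, $f^n|_J$ is a $C^{1+\alpha}$ diffeomorphism onto its image $f^n(J)$, which by the Markov property contains some $I_i \in \II$, so $|f^n(J)| \geq c := \min_i|I_i| > 0$. The mean value theorem gives $|f^n(J)| = |(f^n)'(\xi)|\,|J|$ for some $\xi \in J$, and Fact~\ref{fact} upgrades this to $\sup_J |(f^n)'|^{-1} \leq K\,|J|/|f^n(J)| \leq (K/c)\,|J|$. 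Since the intervals in $\DD_n$ have pairwise disjoint interiors inside $[0,1]$, $\sum_{J\in\DD_n} \sup_J |(f^n)'|^{-1} \leq K/c$, hence $P(f|_B,-\log|f'|) \leq 0$ for every Markov subset, giving $P(f|_{[0,1]},-\log|f'|) \leq 0$ and therefore $\tilde{s}_0 \leq 1$.

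For the lower bound, I would exhibit for every $s < 1$ a Markov subset $B$ with $P(f|_B,-s\log|f'|) > 0$. Under the piecewise $C^{1+\alpha}$ hypothesis, Lasota--Yorke produces an ergodic $f$-invariant absolutely continuous probability measure $\hat\mu$, and Rokhlin's formula yields $h_{\hat\mu} = \chi_{\hat\mu} =: \chi$, with $\chi \geq \log\lambda > 0$ by uniform expansion. By Hofbauer's approximation theorem for piecewise monotone uniformly expanding maps (the same circle of results underlying Proposition~\ref{prop:joinmarkovs} and \cite[Lemma~14]{Hofbauerfunda}), one can find a sequence of Markov subsets $B_n \subseteq [0,1]$ carrying ergodic invariant measures $\mu_n$ with $h_{\mu_n} \to h_{\hat\mu}$ and $\chi_{\mu_n} \to \chi_{\hat\mu}$. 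Since $\pi^{-1}(B_n)$ is a subshift of finite type, the classical variational principle applies to the symbolic pressure \eqref{eq:markovpresssymb}, giving
$$P(f|_{B_n},-s\log|f'|) \;\geq\; h_{\mu_n} - s\,\chi_{\mu_n} \;\longrightarrow\; (1-s)\chi \;>\; 0.$$
Thus $P(f|_{[0,1]},-s\log|f'|) > 0$ for every $s<1$, which forces $\tilde{s}_0 \geq 1$. Combining both bounds yields $\tilde{s}_0 = 1$.

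The main obstacle is the approximation step in the lower bound: one needs to simultaneously approximate $h_{\hat\mu}$ and $\chi_{\hat\mu}$ by ergodic measures supported on Markov subsets, not merely to produce Markov subsets with large topological entropy. This is exactly the content of Hofbauer's Markov extension construction in the piecewise monotone $C^{1+\alpha}$ setting, where the bounded distortion guaranteed by Fact~\ref{fact} is used to transfer the ACIM through the tower and truncate it to a Markov piece without losing entropy or expansion in the limit.
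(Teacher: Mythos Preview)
Your upper bound has a gap: the claim that $f^n(J)$ contains some $I_i\in\II$ presumes $f$ itself is Markov, which the lemma does not assume. For a general piecewise monotone expanding map the images of $n$-cylinders can be arbitrarily short, so the bound $|f^n(J)|\geq c$ fails and the disjointness argument does not close. The upper bound is nonetheless standard---the paper simply calls it ``obvious''---and the clean route is Bowen's formula: for each Markov subset $B$ the root of $s\mapsto P(f|_B,-s\log|f'|)$ equals $\dim_H B\leq 1$, hence $P(f|_B,-\log|f'|)\leq 0$ for every $B$ and so $\tilde s_0\leq 1$. Equivalently, use the variational principle on $\pi^{-1}(B)$ together with $h_\mu\leq\chi_\mu$.

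Your lower bound is correct but takes a genuinely different route from the paper. You pass through the Lasota--Yorke/Rychlik ACIM $\hat\mu$, Rokhlin's formula $h_{\hat\mu}=\chi_{\hat\mu}$, and then Hofbauer's tower machinery to produce Markov-supported measures $\mu_n$ with $h_{\mu_n}-s\chi_{\mu_n}\to(1-s)\chi>0$. The paper instead builds the Markov subsets by hand, following Hofbauer--Raith--Simon: iterate until $|(f^k)'|>N$, refine $\II_k$ into a partition $\JJ$ of comparable-length intervals, and for each $I\in\JJ$ trim at most two boundary pieces so that $f^k(J)$ is an exact union of $\JJ$-intervals; a direct length count over the $n$th refinement then gives $P(f|_{B_k},-s\log|f'|)\geq 0$ at $s=\log(N-4K)/\log N\to 1$. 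The paper's construction is elementary, using only bounded distortion (Fact~\ref{fact}) and a covering estimate, whereas your argument is more conceptual but imports heavier ergodic-theoretic input that you would have to cite precisely to close the approximation step you flag as the main obstacle.
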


\begin{proof}
To prove the claim of the lemma, it is enough to construct a Markov-subsystem $B$ such that the root of $P(f|_B,-s\log|f'|)=0$ is arbitrary close to one. In order to construct such a system, we apply a modification of the construction in Hofbauer, Raith and Simon \cite{hofraithsim}. We call the set $\mathfrak{S}_k=\bigcup_{n=0}^{k}f^{-n}(\mathfrak{S})$, where $\mathfrak{S}=\bigcup_{I\in\II}\partial I$.

First, let us fix $N$ large. Since $f$ is uniformly hyperbolic, by taking a sufficiently high $k$ we may assume that $|(f^k)'(x)|>N$ for every $x\in[0,1]\setminus\mathfrak{S}_k$. By subdividing the intervals in $\II_k$ into smaller pieces, we can define a partition $\JJ$ refinement of $\II_k$ such that $|I_1|/|I_2|<2$ for every $I_1\neq I_2\in\JJ$.

For every $I\in\JJ$ we define a subinterval $J\subset I$ to be the maximal interval such that $f^k(J)$ is a union of intervals contained in $\JJ$. It is easy to see that $f^k(I)\setminus f^k(J)$ consists of at most two intervals, both contained in intervals in $\JJ$. Since $|f^k(I)|>N|I|$, $f^k(J)$ is formed by at least $N/2-2$ many intervals from $\JJ$. Moreover,
$$
\frac{|f^k(I)\setminus f^k(J)|}{|f^k(I)|}\leq\frac{ 4}{N}.
$$
Let $\mathcal{K}$ denote the set of intervals $J$ defined above. Let
$$
B_k=\{x\in[0,1]:\text{ for every }\ell\geq0\text{ there exists }J\in\mathcal{K}\text{ such that }f^{\ell k}(x)\in J \}.
$$
It is easy to see that $B_k$ is a Markov subset for $f^k$ and hence, $\bigcup_{m=0}^{k-1}f^m(B_k)$ is a Markov subset for $f$. Denote $\mathcal{K}_n$ the $n$th refinement of the intervals in $\mathcal{K}$ by the map $f^k|_{B_k}$.

Let $K>1$ be the distortion constant from Lemma~\ref{fact}. So, for every $n\geq1$
\begin{enumerate}
  \item $\mathcal{L}(\bigcup_{J\in \mathcal{K}_{n}}J)>\left(1-\frac{4K}{N}\right)^n$,
  \item $|J|\leq \left(\frac{1}{N}\right)^n$ for every $J\in\mathcal{K}_{n}$.
\end{enumerate}
By Lemma~\ref{fact}, for $s=\frac{\log(N-4K)}{\log N}$ and for all $n\geq1$
\begin{eqnarray*}
K\sum_{J\in\mathcal{K}_n}\max_{x\in J}e^{-S_n\log|(f^k)'(x)|}&\geq&\sum_{J\in\mathcal{K}_n}|J|^s=\sum_{J\in\mathcal{K}_n}|J|\cdot|J|^{s-1}\\
&\geq& \left(1-\frac{4K}{N}\right)^n\left(\frac{1}{N}\right)^{(s-1)n}\geq1.
\end{eqnarray*}
Hence, $P(f|_{B_k},-s\log|f'|)\geq 0$. Thus, by taking $N\to\infty$, $s\to1$ and we get $\tilde{s}_0\geq1$. The upper bound for $\tilde{s}_0$ is obvious.
\end{proof}

We note that an immediate corollary of Lemma~\ref{lem:bounds0} is
\begin{equation}\label{eq:phipot}
1\leq s_0,
\end{equation}
where $s_0$ is the unique root of the pressure defined in \eqref{eq:wholepres} with respect to the potential $\varphi^s$. Indeed, $P(f|_B,\varphi^s)\geq P(f|_B,-s\log|f'|)$, since $\varphi^s\geq-s\log|f'|$.

\section{conformal base with Markov structure}\label{sec:high}
Let $f\colon[0,1]^d\mapsto[0,1]^d$ be as in \eqref{eq:base} Markov with $d\geq1$ such that $f_i$ are similitudes. Let $F_i\colon I_i\times\R\mapsto[0,1]^d\times\R$ be such that
\begin{equation*}\label{eq:defskew}
F_i(x,z)=(f_i(x),g_i(x,z)),
\end{equation*}
where $g_i\colon I\times\R\mapsto\R$ is an affine mapping such that for every $x\in[0,1]^d$, the function $g_i(x,.)\colon\R\mapsto\R$ is a similitude and
\begin{equation}\label{eq:this}
\|Df_i\|>|\partial_2g_i|>1.
\end{equation}

\begin{theorem}\label{thm:mainmark}
  Let us assume that one of the following conditions holds.
  \begin{enumerate}[(i)]
    \item\label{it:mark1} The functions $g_i$ has the form
$$
	g_i(x,z)=g_i(z)=\lambda_iz+t_i,\text{ }|\lambda_i|>1, t_i\in\R
	$$
for every $i=1,\ldots,M$, and the IFS $\{g_i^{-1}\}_{i=1}^M$ satisfies HESC (see Definition~\ref{def:hochman});
    \item\label{it:mark2} $d\geq2$, and $\{h_i\}_{i=1}^M$ (defined in \eqref{eq:furstsys}) satisfies HESC;
    \item\label{it:mark3} $d=1$, $F$ is essentially non-diagonal and the base system $f$ is topologically transitive.
  \end{enumerate}
  If \eqref{eq:this} holds then
  $$
  \dim_H\Lambda=s_0,
  $$
  where $s_0$ is the root of the pressure defined in  \eqref{eq:compare}.
\end{theorem}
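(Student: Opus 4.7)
The upper bound $\dim_B\Lambda\le s_0$ is contained in Lemma~\ref{lem:ubandadditive} (recall that $\pmar(s)=P(s,[0,1]^d)$ in the Markov base case, by \eqref{eq:compare}), and since $\dim_H\Lambda\le\dim_B\Lambda$ it also controls the Hausdorff dimension. The identity $s_0=\sup_{\mu\in\mathcal{M}_{\rm{erg}}(\Lambda)}D(\mu)$ is obtained by applying the variational principle of Lemma~\ref{lem:vari} to $B=[0,1]^d$, which is itself a Markov subset under our hypothesis. Hence the remaining task is to show $\dim_H\Lambda\ge s_0$ by producing a sequence of ergodic measures $\mu_q$ on $\Lambda$ with $\dim_H\mu_q\to s_0$.

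The strategy is Bernoulli approximation. Start with an ergodic measure $\mu^{\ast}$ on the shift witnessing $D(\mu^{\ast})=s_0$, supplied by Lemma~\ref{lem:vari}, and invoke Lemma~\ref{lem:qstepdense} to approximate it in the entropy-plus-weak*-topology by $q$-step Bernoulli measures $\mu_q\in\mathcal{B}_Q$. Piecewise linearity of $f_i$ and $g_i$ makes $\log\|Df\|$ and $\log\|\partial_2 g\|$ locally constant, so the Lyapunov exponents $\chi_1,\chi_2$ are weak*-continuous; combined with entropy convergence this forces $D(\mu_q)\to s_0$. After passing to the alphabet $\Omega_{j,Q}^{(q)}$ introduced in \eqref{eq:omega}, each $\mu_q$ is the projection of a Bernoulli measure on a full shift, and $(\Pi)_\ast\mu_q$ is the self-affine measure for the triangular IFS $\Phi^{(q)}=\{\widetilde{F}_\omega:\omega\in\Omega_{j,Q}^{(q)}\}$ of the form \eqref{eq:gentriang}. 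The Markov disjointness of the cylinders $\widetilde{F}_\omega(U)$, with $U$ as in \eqref{eq:U}, supplies SOSC for $\Phi^{(q)}$.

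The final step is to verify $\dim(\Pi)_\ast\mu_q=D(\mu_q)$ in each case. In case (\ref{it:mark1}), the diagonal structure ($B_i=0$) forces the Furstenberg--Kifer measure of $\Phi^{(q)}$ to be a point mass on the horizontal $d$-plane, so Theorem~\ref{thm:iffLyap} reduces the equality to computing the dimension of the fibre projection. This projection is the self-similar measure for the composed IFS $\{g_\omega^{-1}:\omega\in\Omega_{j,Q}^{(q)}\}$, which inherits HESC from $\{g_i^{-1}\}$: the exponential separation rate degrades at most by a factor of $q$, and strong irreducibility of the orthogonal parts is preserved because topological transitivity of the Markov graph guarantees that the loops at $j$ generate a conjugate of the full group $\mathcal{S}(\{O_i\})$. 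Theorem~\ref{thm:hochman} then supplies the needed projected dimension. In case (\ref{it:mark2}), one applies Corollary~\ref{cor:raphoch} to $\Phi^{(q)}$: HESC transfers to the loop Furstenberg--Kifer IFS by the same argument, and the required pressure inequality follows from $s_0\ge d\ge 2$ in the regime $k=1$, $d\ge 2$ (cf.\ Remark~\ref{rem:tosurface}). In case (\ref{it:mark3}), we appeal to Theorem~\ref{thm:planar}: topological transitivity of $f$ lets us embed the words $\iv,\hbar,\jv$ of \eqref{eq:essnondiag} as loops based at a common recurrent state $j$, and the two inequalities in \eqref{eq:essnondiag} translate precisely into the assertion that two of the associated maps $h_\omega$ have distinct fixed points, ruling out a common fixed point for the loop Furstenberg--Kifer IFS.

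The main obstacle is case~(\ref{it:mark3}): one has to engineer the base point $j$ and the block length $q$ so that the essentially non-diagonal obstruction of \eqref{eq:essnondiag}, originally phrased in terms of three specific words $\iv,\hbar,\jv$, materialises as a concrete non-coincidence of fixed points inside the loop Furstenberg--Kifer IFS. The second inequality of \eqref{eq:essnondiag} is precisely what prevents the fixed point of the $\iv$-loop from being moved by the connector $\hbar$ onto the fixed point of the $\jv$-loop, so it is indispensable. Once these combinatorial-algebraic choices are in place, the bound $\dim_H\Lambda\ge\dim_H(\Pi)_\ast\mu_q=D(\mu_q)\to s_0$ closes the argument and yields $\dim_H\Lambda=\dim_B\Lambda=s_0$.
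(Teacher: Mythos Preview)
Your overall strategy matches the paper's: upper bound from Lemma~\ref{lem:ubandadditive}, then lower bound via the Bernoulli approximation of Lemmas~\ref{lem:vari} and~\ref{lem:qstepdense}, producing a full-shift IFS $\Phi^{(q)}$ to which one applies the self-affine machinery of Section~\ref{sec:tools}. Cases~(\ref{it:mark1}) and~(\ref{it:mark2}) are handled essentially as in the paper (note that throughout this section $k=1$, so in case~(\ref{it:mark1}) the strong-irreducibility clause of HESC is vacuous and your remark about the loops generating a conjugate of the full group is unnecessary there).

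The genuine gap is in case~(\ref{it:mark3}). You propose to exhibit two specific loop words in $\Omega_{j,Q}^{(q)}$ whose Furstenberg maps have distinct fixed points by ``embedding $\iv,\hbar,\jv$ as loops at $j$''. The trouble is that once you pad $\iv$ and $\jv$ by connectors to turn them into loops of the \emph{same} length $q$ based at the \emph{same} state $j$, the fixed points of the resulting Furstenberg maps are images of $x^\ast_{\iv}$ and $x^\ast_{\jv}$ under the connector maps, and there is no a~priori reason these images remain distinct; the second inequality in~\eqref{eq:essnondiag} constrains only the specific connector $\hbar$, not the ones forced on you by the choice of $j$ and $q$.

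The paper avoids this by arguing by contradiction and exploiting the special shape of the Bernoulli approximation. In the construction of Lemma~\ref{lem:qstepdense} the loop IFS consists of all $\widetilde F_{\iv_1\jv\iv_2}$ with $\iv_1,\iv_2$ \emph{fixed} and $\jv$ ranging over admissible words of a given length. Assuming all matrices $A_{\iv_1}A_{\jv}A_{\iv_2}$ are simultaneously diagonalised by some $Q$, one uses transitivity to insert $\jv=\jv_1\hbar_1^m\jv_2$ and, comparing $m=1$ with $m=2$, extracts that $A_{\jv_2}A_{\iv_2}Q^{-1}$ (and likewise $QA_{\iv_1}A_{\jv_1}$) diagonalises $A_{\hbar_1}$; similarly for $\hbar_2$ with other connectors $\jv_1',\jv_2'$. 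Finally one runs the word $\jv_1\hbar_1\overline a\hbar_2\jv_2'$ through the same identity, and the accumulated diagonalisations force $R_{\hbar_1}^{-1}A_{\overline a}R_{\hbar_2}$ to be diagonal, i.e.\ $A_{\overline a}v_{\hbar_2}\parallel v_{\hbar_1}$, contradicting precisely the second inequality of~\eqref{eq:essnondiag}. This ``peel off the fixed frame $\iv_1,\iv_2$'' manoeuvre is the missing ingredient; your direct construction would need a separate argument controlling how arbitrary connectors move the fixed points, and the hypotheses as stated do not obviously supply one.
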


\begin{proof}
In all the three cases, the upper bound follows by Lemma~\ref{lem:ubandadditive}.

For the lower bound, it is enough to show that there exists an IFS, which attractor is contained in $\Lambda$ and has dimension arbitrary close to $s_0$. As a combination of Lemma~\ref{lem:vari} and Lemma~\ref{lem:qstepdense},
for every $\varepsilon>0$ there exists an IFS $\Phi$ with attractor $\Lambda'$ and invariant measure $\mu$ such that
$$
D(\mu)>s_0-\varepsilon,\ \Lambda\supseteq\Lambda'\text{ and }\dim_H\Lambda\geq\dim_H\mu.
$$
Moreover, the functions of $\Phi$ are finite compositions of the local inverses $\widetilde{F}_i$ of $F_i$. Hence, it is enough to prove that $\dim_H\mu=D(\mu)$.

By Theorem~\ref{thm:iffLyap}, it is enough to show that
$$
\dim(\proj_V)_*\mu=\min\left\{1,D(\mu)\right\}\text{ for $\mu_F$-a.e. }V,
$$
where $\mu_F$ is the Furstenberg-Kifer measure corresponding to $\mu$ and $\Phi$, defined in Section~\ref{sec:tools}.

It is easy to see that if the IFS $\{g_i^{-1}\}_{i=1}^M$ or $\{h_i\}_{i=1}^M$ satisfies the HESC then every finite subsystem, which is formed by finite compositions of the functions, remains to satisfy the HESC.

Hence, in case \eqref{it:mark1}, $\mu_F$ supported on one point of $G(d,d+1)$ and the claim $\dim(\proj_V)_*\mu=\min\left\{1,D(\mu)\right\}$ follows by \cite[Theorem~1.1]{hochman2012self}, and the case \eqref{it:mark2} follows by Corollary~\ref{cor:raphoch}.

Finally, we turn to the case \eqref{it:mark3}. In order to show this we use Theorem~\ref{thm:planar}. So, it is enough to show the following claim holds, which is the remaining part of the proof.

\begin{claim}
If \eqref{it:mark3} holds then the IFS $\Phi$ can be chosen such that the corresponding Furstenberg system  $\{h_{\iv}\}_{\iv: \widetilde{F}_{\iv}\in\Phi}$ has at least two distinct fixed points.
\end{claim}

By the construction of $\Phi$ in Lemma~\ref{lem:qstepdense}, there exist finite words $\iv_1$ and $\iv_2$ such that $\widetilde{F}_{\iv}\in\Phi$ if and only if $\iv=\iv_1\jv\iv_2$, where $\iv_1\jv\iv_2$ is admissible and $|\jv|=n$ with some large $n$ chosen according to the precision of the approximation. Let us argue by contradiction. That is, assume that $A_{\iv_1}A_{\jv}A_{\iv_2}$ can be simultaneously diagonalised for every $n\geq1$ and every $|\jv|=n$, for which $\iv_1\jv\iv_2$ is admissible. That is, there exists $Q\in GL_{2}(\R)$ such that $Q$ is triangular matrix with diagonal entries $1$ and
$$
QA_{\iv_1}A_{\jv}A_{\iv_2}Q^{-1}=D_{\iv_1\jv\iv_2},
$$
where $D_{\iv_1\jv\iv_2}$ is a diagonal matrix with the diagonal elements of $A_{\iv_1}A_{\jv}A_{\iv_2}$.

By the essentially non-diagonal property of $F$, there exist $\hbar_1, \hbar_2$ and $\overline{a}$ finite admissible words so that $f_{\hbar_1}$ and $f_{\hbar_2}$ have fixed points, $A_{\hbar_1}$ and $A_{\hbar_2}$ are not simultaneously diagonalisable, $\hbar_1\overline{a}\hbar_2$ is admissible and $A_{\overline{a}}v_{\hbar_2}\neq v_{\hbar_1}$, where  $v_{\hbar_1}$ and $v_{\hbar_2}$ denote the eigenvector of the matrix $A_{\hbar_1}$ and $\vv_{\hbar_2}$ respectively, different from $(0,1)$ and with first coordinate $1$. (see \eqref{eq:essnondiag}). Let $R_{\hbar_i}$ be the triangular matrix, with diagonal entries $1$ so that $R_{\hbar_i}^{-1}A_{\hbar_i}R_{\hbar_i}=D_{\hbar_i}$. That is $R_{\hbar_i}=[v_{\hbar_i},(0,1)^T]$.

Since $f$ is topologically transitive, there exists $\jv_1, \jv_2$ and $\jv_1', \jv_2'$ such that $\iv_1\jv_1\hbar_1$, $\hbar_1\jv_2\iv_2$, $\iv_1\jv_1'\hbar_2$ and $\hbar_2\jv_2'\iv_2$ are admissible. By the fixed point property, $\iv_1\jv_1\hbar_1\hbar_1\jv_2\iv_2$ is admissible too. Hence,
\begin{eqnarray*}
D_{\iv_1\jv_1\hbar_1\hbar_1\jv_2\iv_2}&&=QA_{\iv_1}A_{\jv_1}A_{\hbar_1}A_{\hbar_1}A_{\jv_2}A_{\iv_2}Q^{-1}\\
&&=QA_{\iv_1}A_{\jv_1}A_{\hbar_1}A_{\jv_2}A_{\iv_2}Q^{-1}QA_{\iv_2}^{-1}A_{\jv_2}^{-1}A_{\hbar_1}A_{\jv_2}A_{\iv_2}Q^{-1}\\
&&=D_{\iv_1\jv_1\hbar_1\jv_2\iv_2}QA_{\iv_2}^{-1}A_{\jv_2}^{-1}A_{\hbar_1}A_{\jv_2}A_{\iv_2}Q^{-1}.
\end{eqnarray*}
Thus,
\begin{equation}\label{eq:simdiag1}
D_{\hbar_1}=\left(A_{\jv_2}A_{\iv_2}Q^{-1}\right)^{-1}A_{\hbar_1}\left(A_{\jv_2}A_{\iv_2}Q^{-1}\right)\text{ and similarly, }D_{\hbar_1}=\left(QA_{\iv_1}A_{\jv_1}\right)A_{\hbar_1}\left(QA_{\iv_1}A_{\jv_1}\right)^{-1}.
\end{equation}
Moreover, similar argument shows that
\begin{equation}\label{eq:simdiag2}
D_{\hbar_1}=\left(A_{\jv_2'}A_{\iv_2}Q^{-1}\right)^{-1}A_{\hbar_2}\left(A_{\jv_2'}A_{\iv_2}Q^{-1}\right)=\left(QA_{\iv_1}A_{\jv_1'}\right)A_{\hbar_2}\left(QA_{\iv_1}A_{\jv_1'}\right)^{-1}.
\end{equation}
Therefore, the matrices $R_{\hbar_1}^{-1}A_{\jv_2}A_{\iv_2}Q^{-1}$, $R_{\hbar_2}^{-1}A_{\jv_2'}A_{\iv_2}Q^{-1}$, $QA_{\iv_1}A_{\jv_1}R_{\hbar_1}$ and $QA_{\iv_1}A_{\jv_1'}R_{\hbar_2}$ are diagonal matrices. Hence,
\begin{eqnarray*}
D_{\iv_1\jv_1\hbar_1\overline{a}\hbar_2\jv_2'\iv_2}&&=QA_{\iv_1}A_{\jv_1}A_{\hbar_1}A_{\overline{a}}A_{\hbar_2}A_{\jv_2'}A_{\iv_2}Q^{-1}\\
&&=QA_{\iv_1}A_{\jv_1}R_{\hbar_1}R_{\hbar_1}^{-1}A_{\hbar_1}R_{\hbar_1}R_{\hbar_1}^{-1}A_{\overline{a}}R_{\hbar_2}R_{\hbar_2}^{-1}A_{\hbar_2}R_{\hbar_2}^{-1}A_{\jv_2'}A_{\iv_2}Q^{-1}\\
&&=D_1R_{\hbar_1}^{-1}A_{\overline{a}}R_{\hbar_2}D_2,
\end{eqnarray*}
where $D_1$ and $D_2$ are diagonal matrices. Thus $A_{\overline{a}}$ maps the eigendirection $v_{\hbar_2}$ to $v_{\hbar_1}$, which is a contradiction.
\end{proof}

Theorem~\ref{thm:Markov} and Theorem~\ref{thm:triangularmarkov} are immediate corollaries of Theorem~\ref{thm:mainmark}.

\section{Lower bound for the general case with one dimensional base}\label{sec5}

In this section we give the two remaining proofs.

\begin{proof}[Proof of Theorem~\ref{thm:maindiag}] The upper bound follows by the combination of  Lemma~\ref{lem:ubnonmarkov} and equation~\eqref{eq:phipot}.

To show that the lower bound holds, first by the definition \eqref{eq:wholepres} of the pressure $P(f,\varphi^s)$, let $B$ be a Markov subset of $f$ such that $P(f|_B,\varphi^s)$ is sufficiently close to $P(f,\varphi^s)$. Since $k=1$, if  $\{g_i^{-1}\}_{i=1}^M$ satisfies the HESC then every subsystem, formed by composition of functions in $\{g_i^{-1}\}_{i=1}^M$, satisfies it too. Thus, by applying Theorem~\ref{thm:mainmark}\eqref{it:mark1}, we get the assertion.
\end{proof}

\begin{proof}[Proof of Theorem~\ref{thm:maintriang}]
Similarly to the previous case, the upper bound follows by the combination of  Lemma~\ref{lem:ubnonmarkov} and equation~\eqref{eq:phipot}.

To show that the lower bound holds, first by the definition \eqref{eq:wholepres} of the pressure $P(f,\varphi^s)$, let $B$ be a Markov subset of $f$ such that $P(f|_B,\varphi^s)$ is sufficiently close to $P(f,\varphi^s)$.

We may assume that $F|_{B\times\R}$ is essentially non-diagonal. Indeed, let $\iv$ and $\jv$ be the finite length words corresponding to the essentially non-diagonal condition in \eqref{eq:essnondiag}, and let $x_{\iv}$ and $x_{\jv}$ be the two corresponding fixed points. Since $\{x_{\iv}\}$ and $\{x_{\jv}\}$ are trivially Markov subsets of $f$, we can find a Markov subset $B'$ such that $\{x_{\iv},x_{\jv}\}\cup B\subseteq B'$ by Proposition~\ref{prop:joinmarkovs}. Trivially, $P(f|_{B'},\varphi^s)\geq P(f|_B,\varphi^s)$ and $F|_{B'\times\R}$ is essentially non diagonal. Thus, the assertion for the lower bound follows by Theorem~\ref{thm:mainmark}\eqref{it:mark3}.
\end{proof}

\section{Examples: Fractal functions}\label{sec:ex}

Let a data set $\{(x_i, y_i)\in[0,1]\times\R: i=0,1,\ldots,N\}$ be given so that $x_0=0$ and $x_N=1$. Barnsley \cite{Barnsley86} introduced a family of iterated function systems whose attractors $\Lambda$ are graphs of continuous functions $G\colon[0,1]\mapsto\R$, which interpolate the data according to $G(x_i)=y_i$ for $i\in\{0, 1,\ldots,N\}$. This IFS contains only affine transformations with triangular matrices. The dimension theory of the interpolation functions was studied in several papers, see for example Bedford~\cite{Bedford}, Keane, Simon and Solomyak \cite{KSS03} and Ruan, Su and Yao \cite{Ruan}. Here we present a generalised version of fractal interpolation functions $G\colon[0,1]\mapsto\R$ constructed with Markov systems, similar to Deniz and \"Ozdemir \cite{Deniz}.

A particular fractal interpolation function is the Takagi function $G_\lambda\colon[0,1]\mapsto\R$, which is a well known example for a continuous, but nowhere differentiable function, introduced by Takagi, where
$$
G_\lambda(x)=\sum_{n=0}^\infty\lambda^nd_1(2^nx,\mathbb{Z}),
$$
 $1/2<\lambda<1$ and $d_1(x,y)=|x-y|$. It is easy to see that the graph of $G_\lambda$ is the repeller of the dynamics
$$
F(x,y)=\begin{cases}
         \left(2x,\frac{y-x}{\lambda}\right), & \mbox{if }0\leq x<\frac{1}{2}  \\
         \left(2x-1,\frac{y+x-1}{\lambda}\right) & \mbox{if }\frac{1}{2}\leq x\leq 1,
       \end{cases}
$$
which is a Markov system. The dimension of the graph of the Takagi function was studied in several papers, see for example Ledrappier \cite{Ledrappiergraph} and Solomyak \cite{solomyak1998measure}, and the complete answer was given in the recent paper \cite{BHR}. For further properties, see Allaart and Kawamura~\cite{aboutTakagi}.

In the next application, we consider a generalized version of the Takagi function from $[0,1]$ to $\R$, which is not associated to a Markovian system.

A generalisation of the fractal interpolation functions are the fractal interpolation surfaces. For precise definitions, see Feng \cite{Fengsurface} or Bouboulis and Dalla \cite{BouboulisDalla}, Dalla \cite{Dalla}. For $d\geq2$, the defining IFS contains non-linear functions in general. Thus, our method is not suitable for the general case. So, in our last application of our main theorems, we consider an important special case of fractal interpolation surfaces, the graph of multivariable Takagi function.

\subsection{Fractal interpolation functions}\label{sec:interpol}

Let $\{(x_i, y_i)\in[0,1]\times\R: i=0,1,\ldots,N\}$ be a data set such that $0=x_0<x_1<\cdots<x_{N-1}<x_N=1$ and let $I_i=[x_{i-1},x_i]$. For every $i=1,\ldots,N$, let us choose $0\leq k_i<\ell_i\leq N$ such that
$$
\frac{x_{\ell_i}-x_{k_i}}{x_i-x_{i-1}}>1.
$$
We define the base system $f\colon[0,1]\mapsto[0,1]$ such that
\begin{equation}\label{eq:examplebase}
f(x)=\frac{x_{\ell_i}-x_{k_i}}{x_i-x_{i-1}}(x-x_{i-1})+x_{k_i}\text{ if }x\in I_i.
\end{equation}
It is easy to see that $f$ is Markov with respect to the Markov partition $\{I_i\}_{i=1}^N$.

For every  $i=1,\ldots,N$, let us choose real numbers $\lambda_i$ such that
\begin{equation}\label{eq:exparam}
\frac{x_{\ell_i}-x_{k_i}}{x_i-x_{i-1}}>\lambda_i>1.
\end{equation}
Let
\begin{equation}\label{eq:exparam2}
a_i=\frac{y_{\ell_i}-y_{k_i}}{x_i-x_{i-1}}-\lambda_i\frac{y_{i}-y_{i-1}}{x_i-x_{i-1}}\text{ and }d_i=\frac{y_{k_i}x_i-y_{\ell_i}x_{i-1}}{x_i-x_{i-1}}+\lambda_i\frac{y_ix_{i-1}-y_{i-1}x_i}{x_i-x_{i-1}},
\end{equation}
and let us define $g_i(x,y)=a_ix+\lambda_iy+d_i$. Simple calculations show that the system
$$
F(x,y)=(f(x),g_i(x,y))\text{ if }x\in I_i
$$
has a unique repeller $\Lambda$, which is a graph of a continuous function $G$ such that $G(x_i)=y_i$ for $i=0,\ldots,N$. For an example, see Figure~\ref{fig:interpol}.

\begin{figure}
	\centering
	\includegraphics[width=8cm]{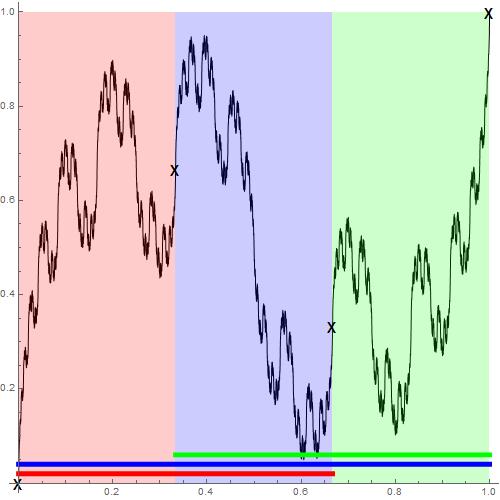}\includegraphics[width=8cm]{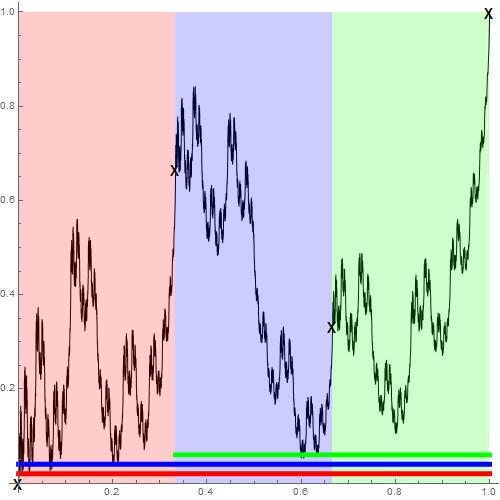}
	\caption{The graph of fractal interpolation functions with data set $\{(0,0),(1/3,2/3),(2/3,1/3),(1,1)\}$, and parameters $\lambda_1=3/2$, $\lambda_2=2$ and $\lambda_3=3/2$. The colors represents which interval is mapped onto which interval by the base system. By Theorem~\ref{thm:interpol}, since $(1+\lambda_2)/(1-\lambda_2)\neq(1-2\lambda_3)/(2-\lambda_3)$ the Hausdorff dimension of the graphs are approximately $1.39024$ and $1.45156$.}\label{fig:interpol}
\end{figure}

Finally, by using Remark~\ref{rem:presspectra}, we define the matrix $A^{(s)}$ so that
\begin{equation}\label{eq:matrixpressure}
A_{i,j}^{(s)}=\begin{cases}
\left(\dfrac{x_i-x_{i-1}}{x_{\ell_i}-x_{k_i}}\right)^{s-1}|\lambda_i| & \text{if }k_i\leq j\leq\ell_i \\
0 & \text{otherwise.}
\end{cases}
\end{equation}
Hence, the root $s_0$ of the pressure defined in \eqref{eq:markovpress} satisfies $\rho(A^{(s_0)})=1$, where $\rho(A)$ denotes the spectral radius of $A$.

\begin{theorem}\label{thm:interpol}
	Let  $\{(x_i, y_i)\in[0,1]\times\R: i=0,1,\ldots,N\}$ and $F$ be such that all the assumptions hold above. If $\lambda_1,\ldots,\lambda_N$ are chosen such that there exist $i,j\in\{1,\ldots,N\}$ such that
	$$
	 \frac{y_{\ell_i}-y_{k_i}-\lambda_i(y_i-y_{i-1})}{x_{\ell_i}-x_{k_i}-\lambda_i(x_i-x_{i-1})}\neq\frac{y_{\ell_j}-y_{k_j}-\lambda_j(y_j-y_{j-1})}{x_{\ell_j}-x_{k_j}-\lambda_j(x_j-x_{j-1})}
	$$
	then
	$$
	\dim_H\Lambda=s_0,
	$$
	where $\rho(A^{(s_0)})=1$ and $A^{(s)}$ is the matrix defined in \eqref{eq:matrixpressure}.
\end{theorem}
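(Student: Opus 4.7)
The strategy is to recognize this setup as a planar Markov, triangular skew product that falls under Theorem~\ref{thm:mainmark} case \eqref{it:mark3} (equivalently Theorem~\ref{thm:triangularmarkov}). I would begin by verifying the structural hypotheses: the base map $f$ defined in \eqref{eq:examplebase} is piecewise linear with expansion $\gamma_i=(x_{\ell_i}-x_{k_i})/(x_i-x_{i-1})>1$ and is Markov with respect to $\{I_i\}$; each $g_i(x,y)=a_ix+\lambda_iy+d_i$ has the required triangular form with $\partial_2 g_i=\lambda_i$; and the dominance hypothesis $\gamma_i>\lambda_i>1$ is exactly \eqref{eq:exparam}. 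The upper bound $\dim_H\Lambda\le s_0$ is then immediate from Lemma~\ref{lem:ubandadditive}, with $s_0$ characterised by $\rho(A^{(s_0)})=1$ via Remark~\ref{rem:presspectra} applied to the matrix \eqref{eq:matrixpressure}.

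The heart of the proof is an algebraic identification. For the matrix $DF_i=\begin{pmatrix}\gamma_i & 0\\ a_i & \lambda_i\end{pmatrix}$, the associated Furstenberg--Kifer IFS from \eqref{eq:furstsys} acts on $\mathbb{RP}^1\simeq\mathbb{R}$ by similarities whose fixed points are
$$
x_i^\ast=\frac{a_i}{\gamma_i-\lambda_i}.
$$
Substituting $\gamma_i=(x_{\ell_i}-x_{k_i})/(x_i-x_{i-1})$ together with the expression for $a_i$ from \eqref{eq:exparam2}, and clearing the common factor $(x_i-x_{i-1})^{-1}$ from numerator and denominator, yields
$$
x_i^\ast=\frac{y_{\ell_i}-y_{k_i}-\lambda_i(y_i-y_{i-1})}{x_{\ell_i}-x_{k_i}-\lambda_i(x_i-x_{i-1})}.
$$
Consequently, the hypothesis of the theorem is exactly the statement that $h_i$ and $h_j$ have distinct fixed points; in particular the Furstenberg IFS has no common fixed point.

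For the lower bound I would mirror the proof of Theorem~\ref{thm:maintriang}. Pick a Markov subset $B\subseteq[0,1]$ on which $f|_B$ is topologically transitive and $P(f|_B,\varphi^s)$ is arbitrarily close to $P(f,\varphi^s)$; if necessary, first apply Proposition~\ref{prop:joinmarkovs} to enlarge $B$ so that it contains admissible cycles through both indices $i$ and $j$. On such a $B$ the essentially non-diagonal condition \eqref{eq:essnondiag} for $F|_{B\times\mathbb{R}}$ can be verified at the word level: taking long admissible cycles $\iv$ and $\jv$ through $i$ and $j$, the quantities $\partial_1 g_\iv/(f_\iv'-\partial_2 g_\iv)$ converge to $x_i^\ast$ and $x_j^\ast$ under the contraction $h$, so $x_i^\ast\ne x_j^\ast$ forces the first inequality of \eqref{eq:essnondiag}, while topological transitivity supplies a connecting word $\hbar$ whose action sends the $\iv$-eigendirection off of the $\jv$-eigendirection, yielding the second inequality. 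Theorem~\ref{thm:mainmark} case \eqref{it:mark3} then produces an ergodic measure on $\Lambda$ whose Hausdorff dimension equals its Lyapunov dimension and is arbitrarily close to $s_0$, completing the lower bound.

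The main obstacle is the bookkeeping for the essentially non-diagonal condition at the word level, since the hypothesis is phrased only at the single-index level; this reduces to confirming that the Furstenberg fixed points of long cycles $A_\iv, A_\jv$ stay distinct, which follows from strict contraction of $\{h_i\}$ together with $x_i^\ast\ne x_j^\ast$. A secondary, entirely bookkeeping issue arises when the Markov graph of $f$ is not irreducible: one simply works inside the irreducible component realising the maximum of $\rho(A^{(s)})$, and Proposition~\ref{prop:joinmarkovs} ensures that the relevant Markov subset can be chosen to contain the witnessing indices $i, j$.
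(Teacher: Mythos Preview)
Your approach matches the paper's, which simply writes ``follows by Theorem~\ref{thm:maintriang}'' without further detail. Your explicit identification of the displayed hypothesis with $\dfrac{a_i}{\gamma_i-\lambda_i}\neq\dfrac{a_j}{\gamma_j-\lambda_j}$, i.e.\ with the Furstenberg--Kifer maps $h_i,h_j$ having distinct fixed points, is exactly the computation one needs and is correct.

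One step deserves repair. Your claim that for ``long admissible cycles $\iv$ through $i$'' the quantity $\partial_1 g_{\iv}/(f_{\iv}'-\partial_2 g_{\iv})$ \emph{converges} to $x_i^\ast$ is not right: the eigendirection of $DF_{\iv}$ is the fixed point of $h_{\iv}$, which depends on the whole cycle and does not in general approach the fixed point of the single map $h_i$. The correct argument is shorter. Let $v\colon X\to\R$ be the (continuous) Furstenberg coding from Section~\ref{sec:tools}. If every admissible periodic point had the same $v$-value $p$, then by density of periodic points in the (transitive) Markov shift and continuity, $v\equiv p$ on $X$; the invariance $v(\ii)=h_{i_0}(v(\sigma\ii))$ then forces $h_k(p)=p$ for every symbol $k$, contradicting $x_i^\ast\neq x_j^\ast$. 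Hence there exist admissible cycles $\iv,\jv$ with distinct eigendirections, giving the first inequality in~\eqref{eq:essnondiag}. For the second inequality, if a connecting path $\hbar$ happens to send the $\iv$-eigendirection onto the $\jv$-eigendirection, replace $\iv$ by an admissible cycle $\iv'$ with a slightly different eigendirection (such cycles are dense by the same argument), so that $DF_{\hbar}v_{\iv'}$ misses the line through $v_{\jv}$.

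Your remark about irreducibility is apposite: Theorem~\ref{thm:maintriang} (and case~\eqref{it:mark3} of Theorem~\ref{thm:mainmark}) assumes topological transitivity of $f$, which the interpolation setup does not guarantee. The paper's one-line proof does not address this either, so your suggested reduction to an irreducible component carrying the spectral radius is the right patch.
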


The proof of the theorem follows by Theorem~\ref{thm:maintriang}.

\subsection{$\beta$-Takagi function}\label{sec:betataka}

Now, we consider the $\beta$-Takagi functions. That is, let $\beta>1$ and $0<\lambda<1$ so that $\lambda\beta>1$, moreover, let $f_\beta$ be the usual $\beta$-expansion on $[0,1]$, i.e.
$$
f_\beta(x)=\beta x\mod1.
$$
Then let $H_{\beta,\lambda}$ be the function so that
\begin{equation}\label{eq:takabeta}
H_{\beta,\lambda}(x)=\sum_{n=0}^\infty\lambda^nd_1(f_{\beta}^n(x),\mathbb{Z}),
\end{equation}
where $d_1(x,y)=|x-y|$.  Simple calculations show that the graph of $H_{\beta,\lambda}$ is the repeller of the system
\begin{equation}\label{eq:takline}
F(x,y)=\begin{cases}
         \left(\beta x-i+1,\frac{y-x}{\lambda}\right), & \mbox{if }x\in\left[\frac{i-1}{\beta},\frac{i}{\beta}\right)\text{ for }i=1,\ldots,\left\lfloor\frac{\beta}{2}\right\rfloor \\
         \left(\beta x-\left\lfloor\frac{\beta}{2}\right\rfloor,\frac{y-x}{\lambda}\right), & \mbox{if }x\in\left[\frac{\left\lfloor\frac{\beta}{2}\right\rfloor}{\beta},\frac{1}{2}\right) \\
         \left(\beta x-\left\lfloor\frac{\beta}{2}\right\rfloor,\frac{y+x-1}{\lambda}\right), & \mbox{if }x\in\left[\frac{1}{2},\frac{\left\lfloor\frac{\beta}{2}\right\rfloor+1}{\beta}\right) \\
         \left(\beta x-i,\frac{y+x-1}{\lambda}\right), & \mbox{if }x\in\left[\frac{i}{\beta},\frac{i+1}{\beta}\right)\text{ for }i=\left\lceil\frac{\beta}{2}\right\rceil,\ldots,\left\lfloor\beta\right\rfloor-1 \\
         \left(\beta x-\left\lfloor\beta\right\rfloor,\frac{y+x-1}{\lambda}\right), & \mbox{if }x\in\left[\frac{\left\lfloor\beta\right\rfloor}{\beta},1\right].
       \end{cases}
\end{equation}
For an example, see Figure~\ref{fig:takagi2}.

\begin{figure}
	\centering
	\includegraphics[width=9cm]{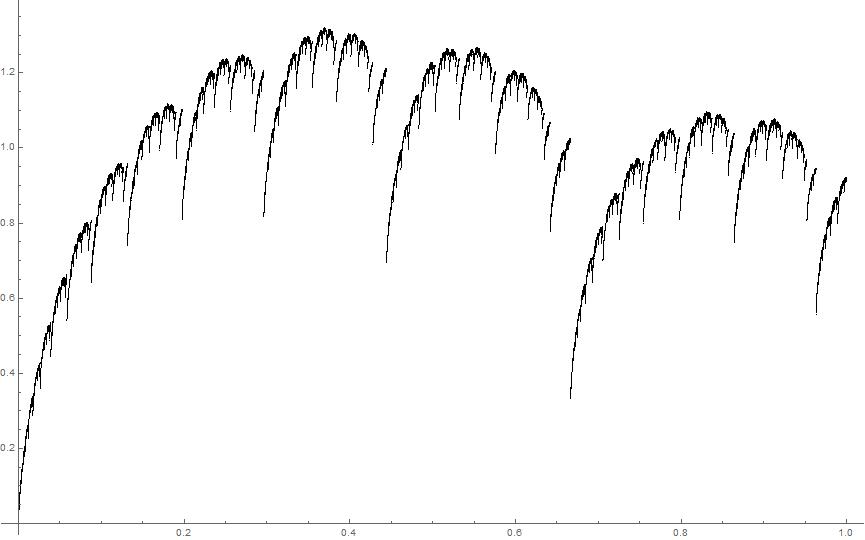}
	\caption{The graph of the $\beta$-Takagi function with parameters $\beta=3/2$ and $\lambda=3/4$. The Hausdorff dimension of the graph is $3-\log2/\log(3/2)\approx1.2905$.}\label{fig:takagi2}
\end{figure}

\begin{theorem}\label{thm:takgeneral}
Let  $\beta>1$ and $0<\lambda<1$ so that $\lambda\beta>1$, and let $H_{\beta,\lambda}$ be as in \eqref{eq:takabeta}. Then
$$
\dim_H\mathrm{graph}(H_{\beta,\lambda})=2+\frac{\log\lambda}{\log\beta},
$$
where $\mathrm{graph}(H_{\beta,\lambda})=\{(x,H_{\beta,\lambda}(x)):x\in[0,1]\}$.
\end{theorem}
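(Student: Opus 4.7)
The plan is to recognize the system $F$ in \eqref{eq:takline} as fitting exactly the setup of Theorem~\ref{thm:maintriang} and to read off $s_0$ from an explicit pressure computation. We have $d=k=1$, the base $f_\beta(x)=\beta x\bmod 1$ is a topologically transitive (in fact mixing) piecewise linear expanding map with uniform slope $\gamma_i=\beta$, and the fiber expansions are $\lambda_i=1/\lambda$, so the hypothesis $\gamma_i>\lambda_i$ translates to the given assumption $\lambda\beta>1$. All remaining affine data is piecewise constant: $\partial_2 g_i\equiv 1/\lambda$, while $\partial_1 g_i=-1/\lambda$ on branches with $x<1/2$ and $\partial_1 g_i=+1/\lambda$ on branches with $x\ge 1/2$.

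For essential non-diagonality, the leftmost branch $f_1(x)=\beta x$ on $[0,1/\beta)$ has fixed point $0$ with $\partial_1 g_1=-1/\lambda$; take $\bar{\imath}=(1)$, for which
$$
\frac{\partial_1 g_{\bar{\imath}}}{f'_{\bar{\imath}}-\partial_2 g_{\bar{\imath}}}=\frac{-1}{\beta\lambda-1}.
$$
Since the $\beta$-shift is topologically mixing and contains periodic orbits inside the right-branch region $\{x\ge 1/2\}$, one can pick an admissible periodic word $\bar{\jmath}$ whose composed map $f_{\bar{\jmath}}$ has a fixed point and with $\partial_1 g_{\bar{\jmath}}>0$, giving a value of the corresponding ratio of the opposite sign. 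Mixing of the base provides a connecting admissible word $\bar{\hbar}$, and since the induced projective action $v\mapsto (\partial_1 g_{\bar{\hbar}}+\partial_2 g_{\bar{\hbar}}v)/f'_{\bar{\hbar}}$ changes as we prepend or append letters to $\bar{\hbar}$, we may refine $\bar{\hbar}$ so that the second inequality in \eqref{eq:essnondiag} also holds, verifying essential non-diagonality.

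Since $|f'_i|\equiv\beta$ and $|\partial_2 g_i|\equiv 1/\lambda$ on every branch, the potential \eqref{eq:potential} is constant for $s>1$, namely $\varphi^s\equiv\log\lambda-(s-1)\log\beta$. By Hofbauer's approximation of the $\beta$-shift by Markov subshifts and the Rényi formula $h_{\mathrm{top}}(f_\beta)=\log\beta$, \eqref{eq:wholepres} yields
$$
\phof(s)=\log\beta+\log\lambda-(s-1)\log\beta\qquad\text{for }s>1.
$$
Solving $\phof(s_0)=0$ gives $s_0=2+\log\lambda/\log\beta$. The inequalities $1/\beta<\lambda<1$, which are exactly the hypotheses of the theorem, confirm that $s_0\in(1,2)$, so the $s>1$ branch of $\varphi^s$ is the correct one. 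Theorem~\ref{thm:maintriang} then gives $\dim_H\mathrm{graph}(H_{\beta,\lambda})=s_0=2+\log\lambda/\log\beta$.

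The only delicate step is the essential non-diagonality check: finding short admissible words $\bar{\imath},\bar{\jmath},\bar{\hbar}$ all at once is made awkward by the admissibility constraints of the $\beta$-shift (especially for $1<\beta<2$, where the alphabet is small and the Parry sequence restricts periodic words). The way around this is to enlarge the words: one can always lift $\bar{\jmath}$ to a sufficiently long periodic word in the mixing $\beta$-shift with the desired sign of $\partial_1 g$, after which the flexibility in $\bar{\hbar}$ trivially produces the second inequality.
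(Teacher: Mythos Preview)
Your overall strategy coincides with the paper's: check that $F$ fits Theorem~\ref{thm:maintriang} and identify $s_0$. Your pressure computation is in fact tidier than the paper's. Since $|f_i'|\equiv\beta$ and $|\partial_2 g_i|\equiv 1/\lambda$ on every branch, the potential $\varphi^s$ is a constant for each $s$, so $\phof(s)=\sup_{B\in\MM([0,1])}h_{\mathrm{top}}(f_\beta|_B)+\log\lambda-(s-1)\log\beta$, and the Markov approximation of $h_{\mathrm{top}}(f_\beta)=\log\beta$ gives $s_0=2+\log\lambda/\log\beta$ directly. The paper reaches the same value but by a longer route: it bounds $s_0$ from above via Lemma~\ref{lem:ubandadditive} and from below by constructing explicit Markov subsets $B_m$ (via Hofbauer--Raith--Simon), forming the transfer matrix $A^{(s)}$, and invoking Perron--Frobenius. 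Both arguments ultimately rest on the same entropy approximation, so this is a presentational difference only.

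The genuine gap is your verification of essential non-diagonality. Your assertion that the $\beta$-shift ``contains periodic orbits inside the right-branch region $\{x\geq 1/2\}$'' is false for small $\beta$: for $1<\beta<3/2$, any point of $[1/2,1]$ is mapped into $[1/\beta,1]$ after finitely many iterates of $x\mapsto\beta x$, and then into $[0,\beta-1]\subset[0,1/2)$, so no $f_\beta$-periodic orbit lies entirely in $[1/2,1]$. Your fallback (``enlarge $\bar\jmath$ to get the desired sign of $\partial_1 g_{\bar\jmath}$'') is the right instinct but is not carried out, and the further claim that one can always ``refine $\bar\hbar$'' to force the second inequality in \eqref{eq:essnondiag} is likewise only asserted. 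The paper replaces this sketch with an explicit construction: it fixes $n$ large enough that $x_n:=(\lceil\beta\rceil-1)/(\beta^{n+1}-1)<\beta^{-n}$, so that $x_n$ is $f_\beta$-periodic of period $n+1$ with its last visit in the rightmost branch; it then writes out $D_{(0,0)}F^{n+1}$ and $D_{(x_n,y_n)}F^{n+1}$ as explicit triangular matrices, reads off that their $(2,1)$ entries (hence their second eigendirections) differ because at least one $\varepsilon_k$ changes sign, and uses the full branch on $[0,1/\beta)$ to build the connecting word and check the second condition in \eqref{eq:essnondiag}. You should replace your existence sketch with a concrete pair of periodic points and an explicit matrix comparison along these lines.
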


\begin{proof}
It is well known that the $\beta$-expansion is topologically transitive for every $\beta>1$. Moreover, let $X\subset\{0,1,\ldots,\lceil\beta\rceil\}^\N$ defined in \eqref{eq:symbshift}, and $X_n$ be all the $n$th level cylinders intersecting $X$. Then by R\'enyi \cite[equation (4.9) and (4.10)]{Renyi},
$$
h_{top}(f_\beta)=\lim_{n\to\infty}\frac{\log X_n}{n}=\log\beta.
$$

Now, we show that $F$ is essentially non-diagonal. Let us choose $n\geq1$ such that
\begin{equation}\label{eq:betatak}
\frac{\lceil\beta\rceil-1}{\beta^{n+1}-1}<\frac{1}{\beta^n}.
\end{equation}
For short, let $x_n=\frac{\lceil\beta\rceil-1}{\beta^{n+1}-1}$ and let $y_n=H_{\beta,\lambda}(x_n)$. Inequality \eqref{eq:betatak} implies that $f^{n+1}\left(x_n\right)=x_n$. Let $$k=\min\{\ell\geq0:f^\ell(x_n)\geq1/2\}.$$
Since $\frac{\lfloor\beta\rfloor}{\beta}\geq\frac{1}{2}$ and $f^{n}(x_n)\in\left[\frac{\lfloor\beta\rfloor}{\beta},1\right]$, we get that $k\leq n$. Moreover,
\begin{eqnarray*}
D_{(x_n,y_n)}F^{n+1}&=\overbrace{\left(\begin{matrix}
\beta & 0 \\
\frac{-1}{\lambda} & \frac{1}{\lambda}\end{matrix}\right)\cdots\left(\begin{matrix}
\beta & 0 \\
\frac{-1}{\lambda} & \frac{1}{\lambda}\end{matrix}\right)}^{k\text{ times}}\cdot\overbrace{\left(\begin{matrix}
\beta & 0 \\
\frac{1}{\lambda} & \frac{1}{\lambda}\end{matrix}\right)\cdots\left(\begin{matrix}
\beta & 0 \\
\frac{1}{\lambda} & \frac{1}{\lambda}\end{matrix}\right)}^{n-k+1\text{ times}}\\
&=\left(\begin{matrix}
\beta^{n+1} & 0 \\
-\sum_{\ell=1}^k\frac{\beta^{n+1-\ell}}{\lambda^\ell}+\sum_{\ell=k+1}^{n+1}\frac{\beta^{n+1-\ell}}{\lambda^\ell} & \frac{1}{\lambda^{n+1}}\end{matrix}\right).
\end{eqnarray*}
On the other hand, $0$ is a fixed point of $f$, and thus,
$$
D_{(0,0)}F^{n+1}=\left(\begin{matrix}
\beta^{n+1} & 0 \\
-\sum_{\ell=1}^{n+1}\frac{\beta^{n+1-\ell}}{\lambda^\ell} & \frac{1}{\lambda^{n+1}}\end{matrix}\right).
$$
Since $k\leq n$, the eigendirections of $D_{(x_n,y_n)}F^{n+1}$ and $D_{(0,0)}F^{n+1}$ have different eigendirections. On the other hand, $f_{\beta}|_{[0,1/\beta]}$ has full stripe and therefore the path $0^{n+1}00^{k}\lceil\beta\rceil^{n-k+1}$ is admissible and $D_{(0,0)}F$ does not map the eigendirection of $D_{(0,0)}F^{n+1}$ to the eigendirection of $D_{(x_n,y_n)}F^{n+1}$. Thus, \eqref{eq:essnondiag} holds.

By applying Theorem~\ref{thm:maintriang},
$$
\dim_H\mathrm{graph}(H_{\beta,\lambda})=s_0,
$$
where $s_0$ is the unique root of the pressure $P(f_\beta,\varphi^s)=\sup_{B\in\MM(A)}P(f_\beta|_B,\varphi^s)$. By Lemma~\ref{lem:ubandadditive}, $s_0\leq2+\frac{\log\lambda}{\log\beta}$. Thus, it is enough to find a sequence of Markov subsets $B_m$, for which $s_m\to2+\frac{\log\lambda}{\log\beta}$, where $s_m$ is the unique root of the pressure $s\mapsto P(f|_{B_m},\varphi^{s})$.

Let us denote the set of continuity intervals of $f_{\beta}$ by $\mathcal{I}$, that is, $$
\mathcal{I}=\left\{\left[\frac{i}{\beta},\frac{i+1}{\beta}\right]:i=0,\ldots,\lfloor\beta\rfloor-1\right\}\cup\left\{\left[\frac{\lfloor\beta\rfloor}{\beta},1\right]\right\},
$$
and the $n$th refinement of $\II$ by $\II_n=\bigvee_{i=0}^{n-1}f_{\beta}^{-i}(\II)$.

\begin{claim}
For every $\varepsilon>0$ there exists $m\geq1$, a set $B_m\subset[0,1]$ and $\DD_m\subseteq\II_m$ such that
\begin{enumerate}
  \item $f_\beta(B_m)=B_m$ and $f_\beta|_{B_m}$ topologically transitive,
  \item $B_m$ is a Markov subset with Markov partition $\DD_m$,
  \item $h_{top}(f_\beta|_{B_m})>h_{top}(f_\beta)-\varepsilon$.
\end{enumerate}
\end{claim}

The claim follows from Hofbauer, Raith and Simon \cite[Proposition~1(a),(b),(c) and Lemma~2]{hofraithsim}.

Let $A^{(s)}$ be a $\#\DD_m\times\#\DD_m$ matrix such that
$$
A^{(s)}_{I,J}=\begin{cases}\lambda\beta^{-(s-1)} & \text{if }J\cap B_m\subseteq f_{\beta}(I\cap B_m)\text{ for }I,J\in\DD_m \\
                            0 & \text{otherwise}.\end{cases}
$$
By Remark~\ref{rem:presspectra}, $\rho(A^{(s_m)})=1$, where $s_m$ is the root of $s\mapsto P(f_\beta|_{B_m},\varphi^s)$. Since $f_\beta|_{B_m}$ is topologically transitive, there exists $K\geq1$ such that every element of $\left(A^{(s_m)}\right)^K$ is strictly positive and by the Perron-Frobenius Theorem, $\lim_{k\to\infty}\left(A^{(s_m)}\right)^k=\underline{u}\underline{v}^T$, where $\underline{u}$ and $\underline{v}$ are the right- and left-eigenvectors of $A^{(s_m)}$ with eigenvalue $1$ so that $\underline{v}^T\underline{u}=1$.

For, $I,J\in\DD_m$, let
$$
I\stackrel{n}{\rightarrow}J=\left\{(I_1,\ldots,I_{n}):I_j\in\DD_m,\ I_1=I, I_n=J,\ f_\beta(I_j\cap B_m)\supseteq I_{j+1}\cap B_m\text{ for }1\leq j\leq n-1\right\}.
$$
Thus,
$$
h_{top}(f_\beta|_{B_m})=\lim_{n\to\infty}\frac{\log\#\bigcup_{I,J\in\DD_m}I\stackrel{n}{\rightarrow}J}{n}.
$$
But for every $k\geq 1$, and $I,J\in\DD_m$,
$$
\left(\left(A^{(s_m)}\right)^k\right)_{I,J}=\left(\lambda\beta^{-(s_m-1)}\right)^k\cdot\#(I\stackrel{n}{\rightarrow}J).
$$
Hence,
$$
\log\beta-\varepsilon<h_{top}(f_\beta|_{B_m})=\lim_{k\to\infty}\frac{\log\frac{\boldsymbol{1}^T\left(A^{(s_m)}\right)^k\boldsymbol{1}}{\left(\lambda\beta^{-(s_m-1)}\right)^k}}{k}=-\log\left(\lambda\beta^{-(s_m-1)}\right),
$$
which implies that $s_m>2+\frac{\log\lambda}{\log\beta}-\varepsilon.$ Since $\varepsilon>0$ was arbitrary, the statement follows.

\end{proof}

\subsection{Multivariable Takagi function}\label{sec:multitaka}

Let $d\geq1$ and let $D=\mathrm{diag}(\beta_1,\ldots,\beta_d)$ be a diagonal matrix such that $\beta_i\geq2$ integers for $i=1,\ldots,d$. For $(\max_i\beta_i)^{-1}<\lambda<1$, let $G_{\lambda,D}\colon[0,1]^d\mapsto\R$ be such that
$$
G_{\lambda,D}(\underline{x})=\sum_{n=0}^\infty\lambda^nd_1(D^n\underline{x},\mathbb{Z}^d),
$$
where $d_1(\underline{x},\underline{y})=\sum_{i=1}^d|x_i-y_i|$. Denote $\mathrm{graph}(G_{\lambda,D})$ the graph of $G_{\lambda,D}$, i.e.
$$
\mathrm{graph}(G_{\lambda,D})=\left\{(\xv,y)\in[0,1]^d\times\R:G_{\lambda,D}(\xv)=y\right\}.
$$
See Figure~\ref{fig:takagi}, for an example of the multivariable Takagi function.

Similarly to \eqref{eq:takline}, $\mathrm{graph}(G_{\lambda,D})$ is the unique invariant repeller of the map\linebreak $F\colon[0,1]^d\times\R\mapsto[0,1]^d\times\R$, where
\begin{equation}\label{eq:gentak}
F(x_1,\ldots,x_d,y)=\left(\beta_1x_1\mod1,\ldots,\beta_dx_d\mod1,\frac{y-d_1(\xv,\mathbb{Z}^d)}{\lambda}\right).
\end{equation}
Moreover,
\begin{equation}\label{eq:struktak}
G_{\lambda,D}(x_1,\ldots,x_d)=H_{\beta_1,\lambda}(x_1)+\cdots+H_{\beta_d,\lambda}(x_d).
\end{equation}
The main statement of this section is the following.

\begin{figure}
	\centering
	\includegraphics[width=\textwidth]{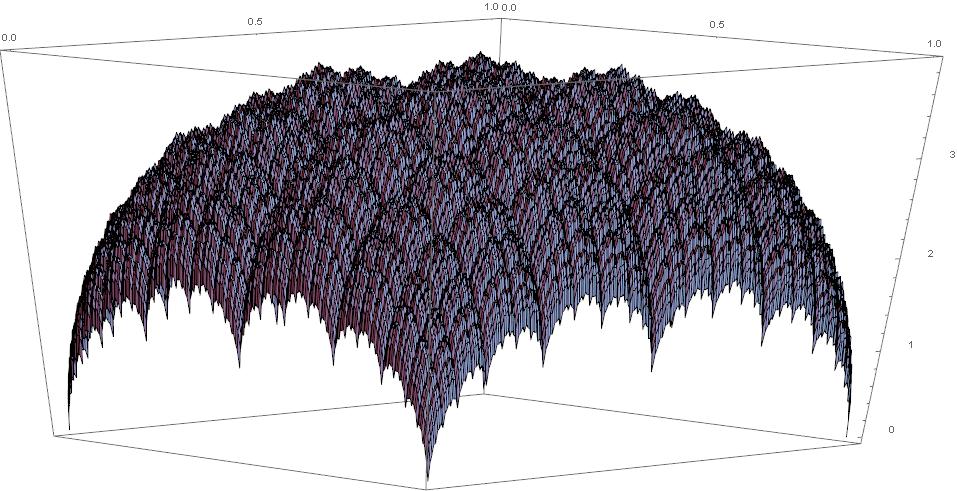}
	\caption{The graph of the multivariable Takagi function with parameters $d=2$, $\beta_1=\beta_2=2$ and $\lambda=2/3$.}\label{fig:takagi}
\end{figure}

\begin{theorem}\label{thm:takagi}
  Let $d\geq1$ and let $D=\mathrm{diag}(\beta_1,\ldots,\beta_d)$ be a diagonal matrix such that $\beta_i\geq2$ integers for $i=1,\ldots,d$. For $(\max_i\beta_i)^{-1}<\lambda<1$
  $$
  \dim_H\mathrm{graph}(G_{\lambda,D})=d+1+\frac{\log\lambda}{\log\max_i\beta_i}.
  $$
\end{theorem}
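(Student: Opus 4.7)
The plan is to bound $\dim_H\mathrm{graph}(G_{\lambda,D})$ from above using H\"older regularity of $G_{\lambda,D}$ and from below by a slicing argument fed with Theorem~\ref{thm:takgeneral}. The structural identity \eqref{eq:struktak}, which writes $G_{\lambda,D}$ as a separated sum of one-dimensional $\beta_i$-Takagi functions, is the essential input; note that Theorem~\ref{thm:triangularmarkov} does not apply directly because the base map $\xv\mapsto D\xv\bmod\mathbb{Z}^d$ fails to be a similitude unless all $\beta_i$ coincide.

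For the upper bound, each $H_{\beta_i,\lambda}$ is H\"older continuous with exponent $\alpha_i=-\log\lambda/\log\beta_i$ when $\lambda\beta_i>1$ and Lipschitz otherwise, and so in any case H\"older with exponent at least $\alpha^*:=-\log\lambda/\log\max_i\beta_i$. Summing via \eqref{eq:struktak} and using the triangle inequality shows that $G_{\lambda,D}$ is $\alpha^*$-H\"older on $[0,1]^d$, and the classical covering estimate for graphs of H\"older functions $[0,1]^d\to\R$ yields
$$\dim_H\mathrm{graph}(G_{\lambda,D})\le\overline{\dim}_B\mathrm{graph}(G_{\lambda,D})\le d+1-\alpha^*=d+1+\frac{\log\lambda}{\log\max_i\beta_i}.$$

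For the lower bound, fix $i^*$ with $\beta_{i^*}=\max_i\beta_i$ and let $\pi\colon\R^{d+1}\to\R^{d-1}$ be the coordinate projection that forgets $x_{i^*}$ and $y$. For every $\yv\in[0,1]^{d-1}$, \eqref{eq:struktak} identifies the slice $\mathrm{graph}(G_{\lambda,D})\cap\pi^{-1}(\yv)$ with the graph of the map $x_{i^*}\mapsto H_{\beta_{i^*},\lambda}(x_{i^*})+\sum_{j\neq i^*}H_{\beta_j,\lambda}(y_j)$, which is a vertical translate of $\mathrm{graph}(H_{\beta_{i^*},\lambda})$; by Theorem~\ref{thm:takgeneral} every such slice has Hausdorff dimension $2+\log\lambda/\log\beta_{i^*}$ (applicable since $\lambda\beta_{i^*}>1$ by hypothesis). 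The Marstrand--Eilenberg slicing inequality
$$\int_{\R^{d-1}}\mathcal{H}^s\bigl(\mathrm{graph}(G_{\lambda,D})\cap\pi^{-1}(\yv)\bigr)\,d\yv\le c\,\mathcal{H}^{s+d-1}\bigl(\mathrm{graph}(G_{\lambda,D})\bigr),$$
applied with any $s<2+\log\lambda/\log\beta_{i^*}$, then forces $\mathcal{H}^{s+d-1}(\mathrm{graph}(G_{\lambda,D}))=\infty$, because for each $\yv\in[0,1]^{d-1}$ the corresponding slice has infinite $\mathcal{H}^s$-measure. Letting $s\nearrow2+\log\lambda/\log\beta_{i^*}$ yields the matching lower bound.

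The main obstacle is conceptual rather than technical: one must recognize that the additive separation of variables in \eqref{eq:struktak} makes Theorem~\ref{thm:takgeneral} directly usable slice-by-slice, bypassing the failure of conformality of the base dynamics. The choice $\beta_{i^*}=\max_i\beta_i$ is forced, since slicing out any other $x_j$ would give only the weaker bound $d+1+\log\lambda/\log\beta_j$; the H\"older exponent achieved by $G_{\lambda,D}$ on the upper-bound side matches this optimal slice precisely because the roughest component dominates the modulus of continuity.
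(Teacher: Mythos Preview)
Your proof is correct and follows essentially the same route as the paper: the upper bound via H\"older regularity is exactly Lemma~\ref{lem:totakag2}, and your slicing argument---projecting out $x_{i^*}$ and $y$, identifying each fiber as a translate of $\mathrm{graph}(H_{\beta_{i^*},\lambda})$, and invoking the Marstrand--Eilenberg inequality together with Theorem~\ref{thm:takgeneral}---is precisely the content of Lemma~\ref{lem:totakag} specialized to the present situation. The paper states the slicing step as a general lemma for arbitrary $g_i$, but the argument is identical.
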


However, the proof of Theorem~\ref{thm:takagi} is quite ad-hoc and using deeply the special structure \eqref{eq:struktak} of the function $G_{\lambda,D}$. Thus, to illustrate the application of Theorem~\ref{thm:triangularmarkov}, we present here a weaker result also, which might be instructive for further applications.

\begin{proposition}\label{thm:takagiminor}
  Let $d\geq2$ and $D'=\mathrm{diag}(2,\ldots,2)$. Then there exists a set $E\subset(\frac{1}{2},1)$ such that $\dim_PE=0$ and
  $$
  \dim_H\mathrm{graph}(G_{\lambda,D'})=d+1+\frac{\log\lambda}{\log2}\text{ for every }\lambda\in(1/2,1)\setminus E.
  $$
  Moreover, if $\frac{1}{\sqrt{5}-1}<\lambda<1$ then $\dim_H\mathrm{graph}(G_{\lambda,D'})=d+1+\frac{\log\lambda}{\log2}$.
\end{proposition}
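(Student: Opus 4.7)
The plan is to realize $\mathrm{graph}(G_{\lambda, D'})$ as the repeller of the Markov--triangular skew-product of \eqref{eq:gentak} with $d \geq 2$, $k = 1$, conformal base $\xv \mapsto 2\xv \bmod \mathbb{Z}^d$ (so each $\gamma_i = 2$) and fiber expansion $\lambda_i = 1/\lambda$ for each of the $M = 2^d$ branches, and then apply Theorem~\ref{thm:rap}. Corollary~\ref{cor:raphoch} is not available here, because all orthogonal parts $O_i$ equal the identity, so the strong-irreducibility clause of HESC fails for $d \geq 2$ and we cannot access the sufficient condition through the Furstenberg-Kifer IFS $\{h_i\}$.

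First, the pressure in \eqref{eq:pressuredefsymb} is explicit: for $1 \leq s \leq d+1$ one computes $P(s,[0,1]^d) = (d+1-s)\log 2 + \log\lambda$, whose unique root is $s_0 = d+1+\log_2\lambda$, and Lemma~\ref{lem:ubandadditive} supplies the matching upper bound. For the lower bound, let $\nu$ be the uniform Bernoulli measure on $\{0,1\}^{d\mathbb{N}}$ and $\mu = \Pi_*\nu$; then $D(\mu)=s_0$, and by Theorem~\ref{thm:rap} equality $\dim\mu = s_0$ follows once $\dim\mu_F > (d+1) - s_0 = \log_2(1/\lambda)$. Plugging the local inverses $\widetilde{F}_i$ into \eqref{eq:furstsys} produces the Furstenberg-Kifer IFS on $\mathbb{R}^d$,
\[
h_i(\xvv) = \frac{1}{2\lambda}\xvv - \frac{1}{2\lambda}\sigma_i, \qquad i \in \{0,1\}^d,
\]
with $\sigma_i \in \{\pm 1\}^d$ and $(\sigma_i)_j = (-1)^{i_j}$. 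Under $\nu$ the coordinates of the digits are jointly i.i.d.\ uniform, so the $d$ components of $v(\ii) = -\sum_{n\geq 0}(2\lambda)^{-(n+1)}\sigma_{i_n}$ are independent, and each is a scaled copy of the classical Bernoulli convolution $\mu_\beta$ with parameter $\beta = 1/(2\lambda) \in (1/2,1)$; by Feng--Hu exact dimensionality, $\dim\mu_F = d\cdot\dim\mu_\beta$.

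For the generic statement I apply Theorem~\ref{thm:hochman} to the one-parameter family $\{x \mapsto \beta x \pm 1\}_{\beta \in (1/2,1)}$: HESC fails only on a set $E_0 \subset (1/2,1)$ of packing dimension zero (the strong-irreducibility clause is vacuous when $k=1$), and on its complement $\dim\mu_\beta = \min\{1,\log 2/\log(1/\beta)\} = 1$. Pulling back through $\beta = 1/(2\lambda)$ yields $E \subset (1/2,1)$ with $\dim_P E = 0$; for $\lambda \in (1/2,1)\setminus E$ we get $\dim\mu_F = d \geq 2 > 1 > \log_2(1/\lambda)$, and Theorem~\ref{thm:rap} delivers $\dim_H\mathrm{graph}(G_{\lambda,D'}) \geq s_0$.

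For the unconditional range $\lambda > 1/(\sqrt{5}-1)$, equivalently $\beta < 1/\phi$ with $\phi = (1+\sqrt{5})/2$, I would invoke the transversality of the Bernoulli convolution family on the interval $(0,1/\phi)$ established by Peres, Schlag and Solomyak: transversality produces a uniform, effective lower bound on $\dim\mu_\beta$ throughout $(1/2,1/\phi)$, which upon multiplication by $d\geq 2$ exceeds the explicit threshold $\log_2(1/\lambda)$ on the whole interval $(1/(\sqrt{5}-1),1)$, and a second application of Theorem~\ref{thm:rap} closes the proof. The main obstacle is precisely this quantitative step: since $\phi$ is the smallest limit point of the Pisot numbers, Pisot reciprocals $1/\theta$ accumulate at $1/\phi$ from below and the corresponding $\mu_{1/\theta}$ are singular with $\dim<1$, so one cannot hope for $\dim\mu_\beta = 1$ throughout $(1/2,1/\phi)$; one must convert transversality into an everywhere-positive, explicit lower bound that dominates $(1+\log_2\beta)/d$ uniformly in $\beta$, with the sharpness of the threshold $1/\phi$ reflecting exactly the boundary at which transversality breaks down.
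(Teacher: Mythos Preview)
Your setup, pressure computation, identification of the Furstenberg--Kifer IFS, and reduction via Theorem~\ref{thm:rap} to the inequality $\dim\mu_F>\log_2(1/\lambda)$ are all correct and match the paper. For the generic statement your argument is essentially the paper's: the paper simply projects $\mu_F$ to one coordinate and applies Hochman's one-dimensional theorem to get $\dim_H\mu_F\geq\dim_H(\proj\mu_F)=1$ off a zero-dimensional set of $\lambda$'s, which (since $s_0>d$ strictly) already exceeds the threshold; your product/Feng--Hu refinement is a harmless strengthening.

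The genuine gap is in the unconditional range $\lambda>1/(\sqrt5-1)$. Transversality of the Bernoulli-convolution family is the wrong tool here: it yields statements for Lebesgue-a.e.\ parameter, not pointwise lower bounds, and as you yourself note there are Pisot reciprocals in $(1/2,1/\phi)$ at which $\dim\mu_\beta<1$. You end by saying ``one must convert transversality into an everywhere-positive, explicit lower bound'' without doing so; no such conversion exists in the literature and nothing about transversality produces one. The paper's argument is entirely different and elementary: the threshold $\lambda>1/(\sqrt5-1)$ is exactly the condition
\[
\frac{1}{2\lambda}+\Bigl(\frac{1}{2\lambda}\Bigr)^{2}<1,
\]
under which any point of $\R^d$ lies in at most $2^d$ of the $4^d$ second-level cylinders $h_{\iv_1\iv_2}(\Lambda)$. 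Iterating this overlap bound gives, for a suitable $\kappa>0$ and all $x,n$,
\[
\mu_F\bigl(B_{\kappa(2\lambda)^{-2n}}(x)\bigr)\le 2^{-dn},
\]
hence $\dim_H\mu_F\ge \dfrac{d\log 2}{2\log(2\lambda)}$, and one checks directly that this exceeds $1$ for every $d\ge2$ and $\lambda<1$. No Bernoulli-convolution technology is needed; the appearance of the golden ratio is purely combinatorial, coming from the quadratic $\beta^2+\beta-1=0$ that governs second-level separation, not from transversality.
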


By \eqref{eq:gentak}, observe that $\mathrm{graph}(G_{\lambda,D'})$ is the attractor of the IFS
\begin{equation}\label{eq:ifstakagi}
\Phi=\left\{F_{\iv}(\xv)=\left(\begin{matrix}
  1/2 & 0 & \cdots & 0  \\
  0 & \ddots & 0 & 0 \\
  0 & \cdots & 1/2 &0 \\
  (-1)^{i_1}/2 & \cdots & (-1)^{i_n}/2 & \lambda
\end{matrix}\right)\xv+\left(\begin{matrix}
  i_1/2 \\
  \vdots \\
  i_n/2 \\
  \sum_{k=1}^di_k
\end{matrix}\right)\right\}_{\iv\in\{0,1\}^d}.
\end{equation}
By using the definition of the pressure \eqref{eq:compare} and \eqref{eq:markovpress}, we get for the root of the pressure that
$$
s_0=d+1+\frac{\log\lambda}{\log 2}.
$$
Moreover, the equilibirum measure $\mu$ (defined in Lemma~\ref{lem:vari}) is the uniform Bernoulli measure on the symbolic space $\Sigma=\left(\{0,1\}^d\right)^\N$.

According to Section~\ref{sec:tools} and to \eqref{eq:furstsys}, the Furstenberg-Kifer measure $\mu_F$ on $\R^d$ is associated to the uniform Bernoulli measure with the IFS
$$
\Phi_F=\left\{h_{\iv}(\xv)=\frac{1}{2\lambda}\xv+\left(\begin{matrix}
  (-1)^{i_1}/2 \\
  \vdots \\
  (-1)^{i_n}/2
\end{matrix}\right)\right\}_{\iv\in\{0,1\}^d}.
$$
It is easy to see that $\Phi_F$ does not satisfy Definition~\ref{def:hochman}\eqref{item:HESC2} for $d\geq2$. So, we cannot apply Theorem~\ref{thm:triangularmarkov} and we need a more sophisticated analysis.

By Theorem~\ref{thm:rap}, it is enough to show that $s_0+\dim_H\mu_F>d+1$. Since $s_0\geq d$, in order to prove Theorem~\ref{thm:takagi}, it is enough to show the following lemma.

\begin{lemma}
	Let $\mu_F$ be the Furstenberg-Kifer measure defined above. That is,
	$$
	\mu_F=\sum_{\iv\in\{0,1\}^d}\frac{1}{2^d}\mu_F\circ h_{\iv}^{-1}.
	$$
	Then there exists a set $E\subset(1/2,1)$ such that $\dim_PE=0$ and $\dim_H\mu_F\geq1$ for every $\lambda\in(1/2,1)\setminus E$. Moreover, $\dim_H\mu_F>1$ for $\lambda\in(1/(\sqrt{5}-1),1)$.
\end{lemma}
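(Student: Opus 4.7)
The plan is to exploit a hidden product structure of $\mu_F$, reducing the problem to one-dimensional Bernoulli convolutions, and then to invoke deep known results on those. The maps $h_\iv(\xv)=\tfrac{1}{2\lambda}\xv+((-1)^{i_1}/2,\dots,(-1)^{i_d}/2)^T$ share a common scalar linear part, and the $k$-th coordinate of the translation depends only on the bit $i_k$. Since the uniform measure on $\{0,1\}^d$ makes $i_1,\ldots,i_d$ independent, a direct check shows that the product measure $\nu_\lambda^{\otimes d}$ satisfies the same self-similarity equation as $\mu_F$, where $\nu_\lambda$ is the stationary 1-D measure of the IFS $\{x\mapsto\tfrac{x}{2\lambda}+\tfrac{1}{2},\ x\mapsto\tfrac{x}{2\lambda}-\tfrac{1}{2}\}$ with equal weights. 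By uniqueness, $\mu_F=\nu_\lambda^{\otimes d}$. An affine change of coordinate identifies $\nu_\lambda$ with the symmetric Bernoulli convolution $\nu_\beta$ at parameter $\beta:=1/(2\lambda)\in(1/2,1)$.

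For the first claim, I would invoke Shmerkin's theorem on Bernoulli convolutions together with the subsequent improvement of the exceptional set from Hausdorff to packing dimension zero: there is $E'\subset(1/2,1)$ of packing dimension zero with $\dim_H\nu_\beta=1$ for every $\beta\in(1/2,1)\setminus E'$. Pulling $E'$ back through $\beta\mapsto 1/(2\beta)$ gives the required $E$. Since a coordinate projection of $\mu_F$ is $\nu_\lambda$ and projections do not increase Hausdorff dimension, $\dim_H\mu_F\geq\dim_H\nu_\lambda=1$ for $\lambda\in(1/2,1)\setminus E$.

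For the second claim, Bernoulli convolutions are exact-dimensional by Feng-Hu, and the product structure upgrades to the equality $\dim_H\mu_F=d\cdot\dim_H\nu_\lambda\geq 2\,\dim_H\nu_\lambda$ since $d\geq 2$. The hypothesis $\lambda>1/(\sqrt{5}-1)$ is equivalent to $\beta<1/\phi$ with $\phi=(1+\sqrt{5})/2$, so it suffices to prove the strict bound $\dim_H\nu_\beta>1/2$ for every $\beta$ in the ``golden-mean window'' $(1/2,1/\phi)$. This window is precisely the classical transversality range of Solomyak, in which the contact series $\sum\pm\beta^n$ admits a uniform quantitative transversality estimate. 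From this I would extract a uniform lower bound on the correlation ($L^2$) dimension of $\nu_\beta$, in the spirit of the Peres-Schlag and Peres-Solomyak density estimates, which is strictly larger than $1/2$ throughout the open window, and hence so is $\dim_H\nu_\beta$.

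The hard part is this last step: we must allow no exceptional value of $\beta$ in the window. Hochman's inverse theorem is not sufficient, as it leaves open a countable set of exceptional algebraic parameters (reciprocals of Pisot numbers in $(\phi,2)$, and similar algebraic points) at which $\nu_\beta$ is singular. Handling these uniformly requires either the transversality-driven $L^2$ bound described above, or a case analysis via Garsia's formula combined with an explicit computation (for instance, $\dim_H\nu_{1/\phi}\approx 0.9957$ by Alexander-Zagier) staying safely above $1/2$ throughout a neighborhood of the extremal Pisot parameter $1/\phi$.
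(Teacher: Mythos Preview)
Your treatment of the first assertion is essentially the paper's: project $\mu_F$ to a coordinate axis, recognise the projection as a Bernoulli-convolution-type self-similar measure with contraction $1/(2\lambda)$, and invoke Hochman's parametric theorem to get a packing-dimension-zero exceptional set. The product decomposition $\mu_F=\nu_\lambda^{\otimes d}$ you spell out is correct (and a nice observation), but the paper only needs the projection, not the full product structure.

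For the second assertion your reduction is valid --- exact-dimensionality of $\nu_\lambda$ (Feng--Hu) plus the product formula gives $\dim_H\mu_F=d\cdot\dim_H\nu_\lambda$, so it suffices to show $\dim_H\nu_\beta>1/2$ for every $\beta\in(1/2,1/\phi)$. The gap is in how you propose to obtain this \emph{uniform} bound. Transversality in the sense of Solomyak and Peres--Schlag is an almost-everywhere tool: it controls the Lebesgue measure (or dimension) of exceptional parameters, but says nothing about the correlation dimension at a fixed Pisot reciprocal, where the measure is singular. Your second suggestion (Garsia entropy plus Alexander--Zagier) is in principle feasible but is not a proof as stated: Alexander--Zagier treats only the boundary parameter $1/\phi$, whereas the open window $(1/2,1/\phi)$ corresponds to $1/\beta\in(\phi,2)$, which contains infinitely many Pisot numbers (e.g.\ the tribonacci constant), each requiring its own computation, and you give no mechanism to handle them all.

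The paper avoids this entirely with an elementary counting argument in $\R^d$. The hypothesis $\lambda>1/(\sqrt{5}-1)$ is exactly $\beta+\beta^2<1$; under this condition one checks directly that any point of $[0,1]^d$ lies in at most $2^d$ of the $4^d$ second-level cylinders $h_{\iv_1\iv_2}(\Lambda)$. Iterating, any ball of radius $\asymp\beta^{2n}$ has $\mu_F$-mass at most $2^{-dn}$, giving
\[
\dim_H\mu_F\ \ge\ \frac{d\log 2}{2\log(2\lambda)}\ >\ 1\qquad\text{for }d\ge 2,\ \lambda<1.
\]
This is a two-line Frostman-type bound, requiring no Bernoulli-convolution technology at all. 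Incidentally, the same overlap computation run in one dimension (where $\beta+\beta^2<1$ forces any point to lie in at most two second-level cylinders) yields $\dim_H\nu_\beta\ge\frac{\log 2}{2\log(1/\beta)}>\tfrac12$, which is precisely the missing ingredient in your argument --- so your route can be completed, but by the paper's method rather than by the tools you propose.
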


\begin{proof}
	Let us prove the first assertion. Observe that the orthogonal projection of $\mu_F$ to the first coordinate axis is the self-similar measure on the real line with respect to the IFS $\{x\mapsto \frac{1}{2\lambda}x+\frac{1}{2},x\mapsto \frac{1}{2\lambda}x-\frac{1}{2}\}$ and probability vector $\{\frac{1}{2},\frac{1}{2}\}$. Thus, by applying \cite[Theorem~1.9]{hochman2012self}, we get $\dim_H\mu_F\geq\dim_H\proj_*\mu_F=1$ outside of a set with $0$ dimension.
	
	In order to show the second assertion, let us introduce a few notations. Let $\Lambda=[0,1]^d$ and let us denote the $n$th cylinder sets by $\Lambda_\ii=h_{\ii}(\Lambda)$ for $\ii=(\iv_1,\ldots,\iv_n)$. Observe that if $\frac{1}{2\lambda}+\left(\frac{1}{2\lambda}\right)^2<1$ then for every $x\in[0,1]^d$
	$$
	\#\{\Lambda_{\iv_1\iv_2}:x\in\Lambda_{\iv_1\iv_2}\}\leq 2^d.
	$$
	Thus, by choosing $\kappa=\min\{d(\Lambda_{\iv_1\iv_2},\Lambda_{\jv_1,\jv_2}):\Lambda_{\iv_1\iv_2}\cap\Lambda_{\jv_1,\jv_2}=\emptyset\}/2>0$, for every $x\in[0,1]^d$ and $n\geq1$
	$$
	\mu(B_{\kappa\left(\frac{1}{2\lambda}\right)^{2n}}(x))\leq\frac{\#\left\{\ii\in\left(\{0,1\}^d\right)^{2n}:\Lambda_{\ii}\cap B_{\kappa\left(\frac{1}{2\lambda}\right)^{2n}}(x)\neq\emptyset\right\}}{2^{2dn}}\leq\frac{1}{2^{dn}},
	$$
	and hence,
	$$
	 \dim_H\mu_F\geq\inf_x\liminf_{n\to\infty}\frac{\log\mu_F(B_{\kappa\left(\frac{1}{2\lambda}\right)^{2n}}(x))}{\log\kappa\left(\frac{1}{2\lambda}\right)^{2n}}=\frac{d\log2}{2\log2+2\log\lambda}>1
	$$
	for $\lambda<1$ and $d\geq2$.
\end{proof}

Finally, we turn to the proof of Theorem~\ref{thm:takagi}, which follows by the next two lemmas.

\begin{lemma}\label{lem:totakag}
  Let $d\geq2$ and let $g_i\colon[0,1]\mapsto\R$ be functions for $i=1,\ldots,d$ such that
  $$
  \dim_H\{(x,y):x\in[0,1]\text{ and }g_i(x)=y\}\geq2-\delta_i.
  $$
  Then
  $$
  \dim_H\left\{(x_1,\ldots,x_d,y):(x_1,\ldots,x_d)\in[0,1]^d\text{ and }\sum_{i=1}^dg_i(x_i)=y\right\}\geq d+1-\min_i\delta_i.
  $$
\end{lemma}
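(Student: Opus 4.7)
The plan is to reduce the problem to a slicing argument: by fixing all but one of the variables $x_i$ — choosing the coordinate for which $\delta_i$ attains its minimum — the corresponding slice of the graph of the sum becomes a vertical translate of the graph of a single $g_i$, which already has the required dimension.

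Relabel so that $\delta_1 = \min_i \delta_i$, and write $\Gamma_1 = \{(x,g_1(x)) : x \in [0,1]\}$ for the graph of $g_1$ and $\Gamma = \{(x_1,\ldots,x_d,y) : y = \sum_i g_i(x_i)\}$ for the set to be bounded from below. Decompose $\R^{d+1} = \R^{d-1} \times \R^2$, with first factor carrying coordinates $w = (x_2,\ldots,x_d)$ and second factor carrying $(x_1,y)$, and let $\pi\colon \R^{d+1} \to \R^{d-1}$ be the projection onto the first factor. For each $w = (x_2,\ldots,x_d) \in [0,1]^{d-1}$, setting $c_w := \sum_{i\geq 2} g_i(x_i)$, the fiber
$$\Gamma \cap \pi^{-1}(w) = \{w\} \times \{(x_1, g_1(x_1) + c_w) : x_1 \in [0,1]\}$$
is the image of $\Gamma_1$ under the planar isometry $(x,y) \mapsto (x, y+c_w)$, and hence has Hausdorff dimension at least $2-\delta_1$. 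Moreover $\pi(\Gamma) \supseteq [0,1]^{d-1}$ has positive $(d-1)$-dimensional Lebesgue measure.

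I would then invoke the lower-bound form of Marstrand's slicing theorem (see Mattila~\cite{mattila1999geometry}, Chapter~10, or Falconer~\cite{Falconer2}): if $E \subset \R^{m+k}$ is Borel, its projection onto $\R^m$ has positive $m$-dimensional Lebesgue measure, and $\dim_H(E \cap \pi^{-1}(v)) \geq \alpha$ for a set of $v$'s of positive $m$-dimensional Lebesgue measure, then $\dim_H E \geq m+\alpha$. Our $\Gamma$ is Borel (it is the graph of the Borel function $(x_1,\ldots,x_d) \mapsto \sum_i g_i(x_i)$), so applying this with $m = d-1$ and $\alpha = 2-\delta_1$ yields
$$\dim_H \Gamma \geq (d-1) + (2-\delta_1) = d+1 - \min_i \delta_i.$$

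The only genuine issue is having the lower-bound slicing theorem in exactly the convenient form stated above; everything else (the slice identification, Borel measurability of $\Gamma$, positive Lebesgue measure of the base) is direct, and no regularity assumption on the $g_i$ beyond Borel measurability enters the argument. If one preferred to avoid citing the slicing theorem, the same lower bound can be obtained by a Frostman energy calculation: choose measures $\nu_w$ on the fibers obeying $\nu_w(B(p,r)) \leq C r^{2-\delta_1-\varepsilon}$ by Frostman's lemma applied to $\Gamma_1$, integrate against Lebesgue measure on $[0,1]^{d-1}$, and verify the resulting measure on $\Gamma$ has finite $(d+1-\delta_1-2\varepsilon)$-energy by splitting the kernel estimate according to whether the base or the fiber separation dominates.
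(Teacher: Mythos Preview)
Your proof is correct and follows essentially the same route as the paper: fix all but one coordinate so that each slice is a vertical translate of a single $\mathrm{graph}(g_i)$, then invoke the Fubini-type lower bound for Hausdorff dimension. The paper cites Mattila's Theorem~7.7 directly (rather than Chapter~10) in the integral form $\int \mathcal{H}^{2-\delta_i-\varepsilon}(\Gamma^i_{\xv'})\,d\mathcal{L}_{d-1}(\xv') \leq C\,\mathcal{H}^{d+1-\delta_i-\varepsilon}(\mathrm{graph}(G))$, which is precisely the slicing lower bound you describe.
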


\begin{proof}
For simplicity, let $G(x_1,\ldots,x_d)=g_1(x_1)+\cdots+g_d(x_d)$. Let $i\in\{1,\ldots,d\}$ be arbitrary. Then for every fixed $\xv'=(x_1,\ldots,x_{i-1},x_{i+1},\ldots,x_d)$
$$
\Gamma_{\xv'}^i=\{(x_i,y)\in[0,1]\times\R: G(x_1,\ldots,x_d)=y\}=\{(x_i,y)\in[0,1]\times\R:g_i(x_i)=y-\sum_{\substack{k=1 \\ k\neq i}}^dg_i(x_i)\}
$$
is a translation of $\mathrm{graph}(g_i)=\{(x,y):x\in[0,1]\text{ and }g_i(x)=y\}$. Hence, for every $i\in\{1,\ldots,d\}$ and $\xv'\in[0,1]^{d-1}$, $\dim_H\Gamma_{\xv'}^i=2-\delta_i$.

By \cite[Theorem~7.7]{mattila1999geometry}, there exists $C>0$ such that for every $\varepsilon>0$
$$
\infty=\int\mathcal{H}^{2-\delta_i-\varepsilon}(\Gamma_{\xv'}^i)d\mathcal{L}_{d-1}(\xv')\leq C\mathcal{H}^{d+1-\delta_i-\varepsilon}(\mathrm{graph}(G)).
$$
Thus, $\dim_H\mathrm{graph}(G)\geq d+1-\min_i\delta_i$.
\end{proof}

The following lemma is folklore but for completeness, we give here the complete proof.

\begin{lemma}\label{lem:totakag2}
	Let $\beta_1,\ldots,\beta_d\geq2$ integers and let $1>\lambda>(\max_i\beta_i)^{-1}$. Then the function $G_{\lambda,D}$ is H\"older continous with exponent $-\frac{\log\lambda}{\log\max\beta_i}$ and
	$$
	\overline{\dim}_B\mathrm{graph}(G_{\lambda,D})\leq d+1+\frac{\log\lambda}{\log\max_i\beta_i}.
	$$
\end{lemma}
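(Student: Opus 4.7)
The plan is twofold: first establish that $G_{\lambda,D}$ is H\"older continuous with exponent $\alpha:=-\log\lambda/\log\beta$, where $\beta:=\max_i\beta_i$, and then apply the classical observation that an $\alpha$-H\"older function on $[0,1]^d$ has graph of upper box dimension at most $d+1-\alpha$.

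For the H\"older estimate I would exploit the two obvious bounds on the $n$-th term of the defining series: the map $\zv\mapsto d_1(D^n\zv,\mathbb{Z}^d)$ is $\beta^n$-Lipschitz with respect to the $\ell_1$ norm (because $D^n$ expands each coordinate by at most $\beta^n$ and $d_1(\cdot,\mathbb{Z}^d)=\sum_i d_1(\cdot_i,\mathbb{Z})$ is $1$-Lipschitz in $\ell_1$), and it is uniformly bounded by $d/2$. Given $\xv,\yv\in[0,1]^d$ with $\delta:=\|\xv-\yv\|_1$, I would split the series at the threshold $N\approx\log(1/\delta)/\log\beta$, using the Lipschitz bound for $n\leq N$ and the uniform bound for $n>N$. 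Since $\lambda\beta>1$, the first sum is dominated by a constant times $\delta(\lambda\beta)^N$ and the tail by a constant times $\lambda^N$; the choice of $N$ makes both of order $\delta^\alpha$, yielding $|G_{\lambda,D}(\xv)-G_{\lambda,D}(\yv)|\leq C\|\xv-\yv\|_1^{\alpha}$ for some constant $C$.

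For the box-dimension bound I would partition $[0,1]^d$ into $\delta^{-d}$ axes-parallel cubes of side $\delta$; the H\"older estimate forces the oscillation of $G_{\lambda,D}$ on each such cube to be $O(\delta^\alpha)$, so the portion of the graph over one cube can be covered by $O(\delta^{\alpha-1})$ cubes of side $\delta$ in $\R^{d+1}$. Summing yields $N_\delta(\mathrm{graph}(G_{\lambda,D}))=O(\delta^{\alpha-1-d})$, hence $\overline{\dim}_B\mathrm{graph}(G_{\lambda,D})\leq d+1-\alpha=d+1+\log\lambda/\log\beta$. No substantial obstacle arises: the whole argument is a direct vector-valued adaptation of the classical Takagi/Weierstrass estimate (cf.\ Falconer~\cite{Falconer2}), the main point requiring care being the correct combination of the coordinate expansion rates $\beta_i$ via their maximum $\beta$, which is precisely what the $\ell_1$-Lipschitz bound on $d_1(\cdot,\mathbb{Z}^d)$ achieves.
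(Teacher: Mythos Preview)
Your proposal is correct and follows essentially the same classical Weierstrass-type argument as the paper: split the series at a threshold determined by the distance, use the Lipschitz bound on the early terms and the uniform bound on the tail, then deduce the box-dimension bound from the H\"older estimate via the standard covering count. The only cosmetic difference is that the paper exploits the coordinate decomposition $G_{\lambda,D}(\xv)=\sum_i H_{\beta_i,\lambda}(x_i)$ and proves the one-variable H\"older estimate for each $H_{\beta_i,\lambda}$ separately (which forces a small side case when $\lambda\le\beta_i^{-1}$ for some $i$), whereas you work directly with the multivariable function using $\beta=\max_i\beta_i$ throughout, which is slightly cleaner and avoids that case distinction.
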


\begin{proof}
	First, we show that for every $\beta\geq2$ integer and $\beta^{-1}<\lambda<1$, the function $H_{\beta,\lambda}$ is H\"older continuous with exponent $\frac{\log\lambda}{\log\beta}$. It is easy to see that
	$$
	 d(\beta^nx,\mathbb{Z})=\min_{k\in\mathbb{Z}}|\beta^nx-k|\leq\beta^n|x-y|+\min_{k\in\mathbb{Z}}|\beta^ny-k|=\beta^n|x-y|+d(\beta^ny,\mathbb{Z}).
	$$
 	Let $x,y\in[0,1]$ and $k\geq0$ such that $\beta^{-k-1}<|x-y|\leq\beta^{-k}$. Hence,
 \begin{eqnarray}
|H_{\beta,\lambda}(x)-H_{\beta,\lambda}(y)|&\leq&\sum_{n=0}^k\lambda^n|d(\beta^nx,\mathbb{Z})-d(\beta^ny,\mathbb{Z})|+2\frac{\lambda^{k+1}}{1-\lambda}\\
&\leq&\sum_{n=0}^k\lambda^n\beta^n|x-y|+2\frac{\lambda^{k+1}}{1-\lambda}\\
&\leq&\frac{\lambda^{k+1}\beta^{k+1}}{\lambda\beta-1}|x-y|+2\frac{\lambda^{k+1}}{1-\lambda}\\
&\leq&\left(\frac{\beta}{\lambda\beta-1}+\frac{2}{1-\lambda}\right)|x-y|^{-\log\lambda/\log\beta}.
\end{eqnarray}
Similarly, one can show if $\lambda\leq\beta^{-1}$ then $H_{\beta,\lambda}$ is $(1-\varepsilon)$-H\"older for every $\varepsilon>0$. Thus, by choosing $\varepsilon>0$ sufficiently small,
\[
\begin{split}
|G_{\lambda,D}(\xv)-G_{\lambda,D}(\yv)|&\leq\sum_{i=1}^d|H_{\beta_i,\lambda}(x_i)-H_{\beta_i,\lambda}(y_i)|\\
&\leq \sum_{i=1}^dC_i|x_i-y_i|^{-\log\lambda/\log\max_i\beta_i}\leq C'\|\xv-\yv\|_1^{-\log\lambda/\log\max_i\beta_i}.
\end{split}
\]

To show the second claim of the lemma, let us divide $[0,1]^d$ into cubes $\{U_i\}_{i=1}^{2^{nd}}$ with sidelength $2^{-n}$. For $A\subset\R^{d+1}$, denote $N_n(A)$ the minimal number of cubes with sidelength $2^{-n}$ needed to cover $A$. By the H\"older-continuity of $G_{\lambda,D}$, $G_{\lambda,D}(U_i)$ can be covered by at most $C'2^{-n(-\log\lambda/\log\max_i\beta_i)-n}+1$ intervals with length $2^{-n}$. Thus,
$$
N_n(\mathrm{graph}(G_{\lambda,D}))\leq (C'2^{-n(-\log\lambda/\log\max_i\beta_i)-n}+1)2^{-nd},
$$
which completes the proof.
\end{proof}

\begin{proof}[Proof of Theorem~\ref{thm:takagi}]
The bound $\dim_H\mathrm{graph}(G_{\lambda,D})\leq d+1+\frac{\log\lambda}{\log\max_i\beta_i}$ follows by Lemma~\ref{lem:totakag2}. The lower bound follows by the combination of Lemma~\ref{lem:totakag} and Theorem~\ref{thm:takgeneral}.
\end{proof}

\bibliographystyle{plain}
\bibliography{barnsley_08_08}

\end{document}